\def\muntz{M\"untz }
\def\divdif{\mathord\kern.43em\vrule width.6pt height5.6pt depth-.28pt \kern-.43em\Delta}
\newtheorem{theorem}{Theorem}
\newtheorem{lemma}{Lemma}
\newtheorem{proposition}{Proposition}
\newtheorem{definition}{Definition}
\newtheorem{corollary}{Corollary}
\newtheorem{example}{Example}
\newdefinition{remark}{Remark}
\begin{document}

\title{Gelfond-B\'ezier Curves}

\author[rvt,els]{Rachid Ait-Haddou\corref{cor1}}
\ead{rachid@bpe.es.osaka-u.ac.jp}
\author[focal]{Yusuke Sakane}
\author[rvt,els]{Taishin Nomura}

\cortext[cor1]{Corresponding author}

\address[rvt]{The Center of Advanced Medical Engineering and Informatics,
Osaka University, 560-8531 Osaka, Japan}
\address[focal]{Department of Pure and Applied Mathematics,
Graduate School of Information Science and Technology,
Osaka University, 560-0043 Osaka, Japan}
\address[els]{Department of Mechanical Science and Bioengineering
Graduate School of Engineering Science,
Osaka University, 560-8531 Osaka, Japan}

\begin{abstract}
We show that the generalized Bernstein bases in \muntz spaces
defined by Hirschman and Widder \cite{hirschman} and extended 
by Gelfond \cite{Gelfond} can be obtained as limits of the 
Chebyshev-Bernstein bases in \muntz spaces with respect to an
interval $[a,1]$ as $a$ converges to zero. Such a realization
allows for concepts of curve design such as de Casteljau algorithm,
blossom, dimension elevation to be translated from the general 
theory of Chebyshev blossom in \muntz spaces to these generalized 
Bernstein bases that we termed here as Gelfond-Bernstein bases. 
The advantage of working with Gelfond-Bernstein bases lies 
in the simplicity of the obtained concepts and algorithms 
as compared to their Chebyshev-Bernstein bases counterparts.      
\end{abstract}      

\begin{keyword}
Chebyshev blossom \sep Chebyshev-Bernstein basis \sep 
Schur functions \sep Young diagrams \sep \muntz spaces \sep
Gelfond-B\'ezier curve \sep geometric design 
\end{keyword}
\maketitle
\section{Introduction}
This work was motivated by the following rather surprising observation : 
Let $r_1,...,r_n$ be $n$ real numbers such that $0< r_1 < r_2 < 
... < r_n$. Then, for any interval $[a,b]$ such that $0 < a < b$,  
the linear \muntz space 
\begin{equation}\label{muntzspace}
E = span(1,t^{r_1},t^{r_2},...,t^{r_n})
\end{equation} 
possesses a particular basis $(B_0,B_1,...,B_n)$ called 
the Chebyshev-Bernstein basis with respect to the interval $[a,b]$
and can be characterized by the following two properties \cite{mazure1}: 
For any $t \in [a,b]$, we have
\begin{equation}\label{normalization}
\sum_{k=0}^{n} B_k(t) = 1
\end{equation}
and for any $k=0,...,n$, the function $B_k$ has a zero of 
order $k$ at $a$ and a zero of order $(n-k)$ at $b$. 
The \muntz space $E$ also possesses a different basis,
called generalized Bernstein basis, that were first defined 
by Hirschman and Widder \cite{hirschman}, extended by Gelfond 
\cite{Gelfond} and popularized in Lorentz's book \cite{lorentz}.
Due to the fact that there is a variety of bases in the 
literature that are also termed generalized Bernstein 
polynomials or bases \cite{boyer,winkel} and because the account 
given in Lorentz's book for these generalized Bernstein bases
follows more the approach taken by Gelfond than the one taken by 
Hirschman and Widder, we call these bases here, the Gelfond-Bernstein bases.
The Gelfond-Bernstein bases are in some sense a generalization of the 
classical Bernstein base over the interval $[0,1]$ of the linear 
space of polynomials. The Chebyshev-Bernstein bases are defined 
only with respect to intervals $[a,b]$ such that $a > 0$.
Our observation is the fact that when $b=1$ and $a$ converges to zero,
the Chebyshev-Bernstein bases over the interval $[a,b]$ coincide with 
the Gelfond-Bernstein bases.
To understand the peculiarity and then the consequences of this result, 
we should first recall the historical reasons for defining 
the Gelfond-Bernstein bases. In 1912, Bernstein found an ingenious 
method of proving the Weierstrass approximation Theorem, by defining 
what we now know as the Bernstein basis of the linear space
of polynomials \cite{bernstein}. In 1914, \muntz, answering a conjecture
of Bernstein, generalized the Weierstrass Theorem in the following sense 
\cite{muntz}: Given a sequence of positive real numbers 
$r_1 < r_2 < ...< r_n< ..$ such that $\lim_{n\to\infty} r_n = \infty$,
then the linear space $E=span(1,t^{r_1},t^{r_2},...,t^{r_n},...)$ 
is a dense subset of the space $C\left([0,1]\right)$ of continuous functions over 
the interval $[0,1]$ endowed with the uniform norm if and only if 
\begin{equation} \label{muntzcondition}
\sum_{i=1}^{\infty} \frac{1}{r_i} = \infty.
\end{equation}
The proof given by \muntz of the if part of the Theorem
involved rather complicated techniques on summation of Fourier series.
It was then an interesting and rather difficult problem of whether there 
exists a suitable generalization of Bernstein polynomials that could
lead to a new proof of the if part of \muntz Theorem in a fashion similar 
to Bernstein proof of the Weierstrass Theorem. 
Such generalized Bernstein bases were found by Hirschman and Widder 
in 1949. Their proof of the if part of \muntz theorem
was modified and generalized by Gelfond in 1958. These generalized 
Bernstein bases (Gelfond-Bernstein bases) were defined to specifically
handle the problem of density of \muntz spaces as a subset of the space
of continuous functions over the interval $[0,1]$ (or the interval 
$[0,b]$, $b>0$ through a change of variable). The only hints that 
these generalized Bernstein bases were the most suitable one are 
the fact that they satisfy (\ref{normalization}), they are non-negative
in the interval $[0,1]$ and most importantly that they achieve
the right generalization for proving \muntz Theorem. Now, 
coming to a more recent history, Pottmann, in 1993, defined the notion of 
Chebyshev blossom associated with any linear space 
$F = span \left(1,\phi_1,\phi_2, ...,\phi_n \right)$ such that 
$span \left( \phi_1', \phi_2', ..., \phi_n' \right)$ is an extended 
Chebyshev space of order $n$ on an interval \cite{pottmann}.
Chebyshev blossoming allows for a natural definition of 
the notion of Chebyshev-Bernstein basis associated with the linear 
space $F$ and which reveal striking similarities with the classical notions
associated with the Bernstein-B\'ezier framework such as the notions of 
control points, de Casteljau algorithm, subdivision schemes, dimension 
elevation. In the case of the \muntz space $E$ in (\ref{muntzspace}), 
we can define the notion of Chebyshev-Bernstein basis only on interval 
$[a,b]$ such that $a >0$. Therefore, a way to define a notion of 
Chebyshev-Bernstein basis of the space $E$ over the interval $[0,1]$ 
is to hope that taking the limit of Chebyshev-Bernstein basis on the 
interval $[a,1]$ with $a >0$ as $a$ converges to zero leads to meaningful
expressions that constitute a basis of the space $E$.
Our observation is that in doing so, we did not only defined 
the ``Chebyshev-Bernstein basis'' over the interval $[0,1]$,
but we also discover that they coincide with the Gelfond-Bernstein
basis. This result reflects, first of all, the ingenuity of Hirschmann, 
Widder and Gelfond in defining the right generalized Bernstein bases with 
little knowledge at the time of the most natural criteria for such a 
generalization. Furthermore, this result legitimates the use of Gelfond-Bernstein
bases in computer aided geometric design and in which the CAGD concepts can be
translated from the Chebyshev-Bernstein bases to Gelfond-Bernstein bases by a
limiting process. As we will exhibit in this work, several useful properties
of the Gelfond-Bernstein bases could be simply proven without resort to 
the limiting process. However, the notion of blossom and the derivation of 
the de Casteljau algorithm are not obvious from the classical definition of the 
Gelfond-Bernstein bases and should be derived from the limiting process.
Including the point zero in the interval under consideration
through the limiting process will have an effect of collapsing difficult 
expressions in the theory of Chebyshev blossoms in \muntz spaces to highly
simpler ones. Such simplifications are achieved through a splitting concept in the 
theory of Schur functions. This provides the theory of Gelfond-Bernstein bases
with simpler algorithms as compared to their Chebyshev-Bernstein bases
counterparts.            
The paper is organized as follows. In section 2, we recall some basic properties 
of Schur functions. In section 3, we recall our main results in \cite{aithaddou1}
regarding Chebyshev blossoming in \muntz spaces and in which the Chebyshev blossom 
and the Chebyshev-Bernstein bases are expressed in terms of Schur functions.
The definition of the Gelfond-Bernstein bases, as well as the proof that 
they coincide with the Chebyshev-Bernstein bases through a limiting process will 
be given in section 4. In section 5, we study the notion of Gelfond-B\'ezier curves, 
thereby demonstrating their adequacy to be incorporated into CAGD tools. The 
expression of Chebyshev-Bernstein bases in \muntz spaces are given in terms of Schur 
functions, while the definition of Gelfond-Bernstein bases involves divided 
differences. The connection between the two bases leads to a simple expression of 
the divided differences in terms of Schur functions. We will exhibit the usefulness of 
such expression by providing the Gelfond-Bernstein bases of some specific 
\muntz spaces. In section 7, we define the blossom associated with Gelfond-B\'ezier 
curves and give a method of deriving the de Casteljau algorithm in \muntz spaces.          
In section 8, we study the concept of dimension elevation algorithms of Gelfond-B\'ezier
curves. We define the notion of shifted Gelfond-B\'ezier curves in section 9, and show
their adequacy in curve design. We conclude in Section 10.   

\section{Schur Functions}
The theory of Schur functions will play a fundamental role in this work. 
Therefore, in this section, we fix notations and review some basic concepts 
in the theory. In the case of Schur functions associated with integer partitions,
we will follow the standard Macdonald's notations \cite{Macdonald}.

A sequence $\lambda = (\lambda_1,\lambda_2,...,\lambda_n)$ of real numbers
is said to be a {\it{real partition}} if it satisfies 
\begin{equation*}
\lambda_1 > \lambda_2 - 1 > \lambda_3 - 2 > ...> \lambda_n - (n-1) > -n.
\end{equation*}
The Schur function indexed by a real partition $\lambda$ is defined as
\begin{equation}\label{schurdeterminant}
S_{\lambda}(u_1,...,u_n) = \frac{\det ( u_i^{\lambda_j + n - j} )_{
{1 \leq i, j \leq n}}}{ \prod_{1\leq i < j \leq n}(u_i - u_j)},
\end{equation}
with the convention that L'Hospital's rule is applied whenever there
are equalities among $u_1,u_2,...,u_n$. Note that if 
$\lambda = (\lambda_1,\lambda_2,...,\lambda_n)$ is a real partition, then 
$(\lambda_1,\lambda_2,...,\lambda_n,0)$ is also a real partition.
Therefore, we will adopt the convention that if the number of variables in the Schur 
function is larger than the number of components in the real partition, then we add zeros 
to the real partition. For example, we will write $S_{(\lambda_1,\lambda_2)}(u_1,u_2,u_3,u_4)$
to mean $S_{(\lambda_,\lambda_2,0,0)}(u_1,u_2,u_3,u_4)$. In the case the elements of the 
sequence $\lambda$ are positive integers, we recover the classical notion of integer
partitions and in which the associated Schur function $S_{\lambda}(u_1,...,u_n)$
is an element of the ring $\mathbb{Z}[u_1,...,u_n]$. For integer partitions, we will 
follow the following terminology and conventions. 
The total number of non-zero components, $l(\lambda)$, will be called the length
of the integer partition $\lambda$. We will always ignore the difference between
two integer partitions that differ only in the number of their trailing zeros.
The non-zero $\lambda_i$ of the partition will be called 
the {\it parts} of $\lambda$. The {\it{weight}} $|\lambda|$ of a partition $\lambda$
is defined as the sum its parts  i.e., $|\lambda| = \sum_{i=1}^{\infty} \lambda_i$.
We will find it sometimes convenient to write a partition by the common 
notation that indicate the number of times each integer appears as a part 
in the partition, for example we write the partition 
$\lambda = (4,4,4,3,3,1)$ as $\lambda = (4^{3},3^{2},1)$. 
We will adopt the convention that $S_\lambda(u_1,...,u_n) \equiv 0$
if $l(\lambda) > n$. From the definition, the Schur function associated 
with the {\it empty} partition $\lambda = (0,...,0,..)$ 
is $S_{\lambda}(u_1,...,u_n) \equiv 1$. For the partition $\lambda = (r)$,
the Schur function $S_{\lambda}$ is the complete symmetric function $h_{r}$ i.e., 
$$
S_{(r)}(u_1,u_2,...,u_n) = h_{r}(u_1,...,u_n) = 
\sum_{i_1 \leq i_2 \leq ...\leq i_r} u_{i_1} u_{i_2}...u_{i_r},
$$ 
while for the partition $\lambda = (1^r)$ with $r \leq n$, 
the Schur function $S_{(1^r)}$ is given by the elementary symmetric
function
$e_r$ i.e,
$$
S_{(1^{r})}(u_1,u_2,...,u_n) = e_{r}(u_1,...,u_n) = 
\sum_{i_1 < i_2 < ...< i_r} u_{i_1} u_{i_2}...u_{i_r}.
$$ 
The Schur function $S_{\lambda}$, with $\lambda$ an integer partition, 
can be expressed in terms of the complete symmetric functions 
through the Jacobi-Trudi formula
\begin{equation}\label{jacobi-trudi}
S_{\lambda} = \det \left( h_{\lambda_i-i+j} \right)_{1 \leq i, j \leq n},
\end{equation} 
where we assume that $h_{m}\equiv 0$ if $m < 0$.
The {\it{conjugate}}, $\lambda'$, of an integer partition
$\lambda$ is the integer partition whose Young diagram is the transpose
of the Young  diagram of $\lambda$, equivalently $\lambda'_{i} = Card \{j | \lambda_j \geq i \}.$
Using the conjugate partition, Schur functions can be expressed in terms of the 
elementary symmetric functions through the N\"agelsbach-Kostka formula
\begin{equation*}
S_{\lambda} = \det \left( e_{\lambda_i'-i+j} \right)_{1 \leq i, j \leq n},
\end{equation*} 
where we assume that $e_{m}\equiv 0$ if $m < 0$.
Throughout this work, we will use the notation 
$$
S_{\lambda}(u_1^{m_1},u_2^{m_2},...,u_k^{m_k}),
$$ 
to mean the evaluation of the Schur function in which the argument $u_1$ 
is repeated $m_1$ times, the argument $u_2$ is repeated $m_2$ times and so on. 
\vskip 0.2 cm
\noindent{\bf{Combinatorial definition of Schur functions: }}The {\it Young diagram}
of an integer partition $\lambda$ is a sequence of $l(\lambda )$
left-justified row of boxes, with the number of boxes in the $i$th row being
$\lambda_i$ for each $i$. A box $x =(i,j)$ in the diagram of $\lambda$ 
is the box in row $i$ from the top and column $j$ from the left. For example 
the Young diagram of the partition $\lambda=(5,4,2)$ and the coordinate of its boxes  
are
$$
\lambda = (5,4,2) \qquad 
\newcommand{\ff}{\mbox{\small{(1,1)}}}
\newcommand{\fs}{\mbox{\small{(1,2)}}}
\newcommand{\ft}{\mbox{\small{(1,3)}}}
\newcommand{\ffo}{\mbox{\small{(1,4)}}}
\newcommand{\ffi}{\mbox{\small{(1,5)}}}
\newcommand{\sfs}{\mbox{\small{(2,1)}}}
\newcommand{\sss}{\mbox{\small{(2,2)}}}
\newcommand{\st}{\mbox{\small{(2,3)}}}
\newcommand{\sfo}{\mbox{\small{(2,4)}}}
\newcommand{\tf}{\mbox{\small{(3,1)}}}
\newcommand{\tss}{\mbox{\small{(3,2)}}}
{\Yvcentermath1 \Yboxdim{20pt} \young(\ff\fs\ft\ffo\ffi,\sfs\sss\st\sfo,\tf\tss)} 
$$  
A {\it semi-standard tableau} $T^{\lambda}$ with entries less or equal to $n$ 
is a filling-in the boxes of the integer partition $\lambda$ with numbers 
from $\{ 1, 2, ..., n \}$ making the rows increasing when read from left to 
right and the column strictly increasing when read from the top to bottom. 
We say that the shape of $T^{\lambda}$ is $\lambda$. For each semi-standard
tableau $T^{\lambda}$ of the shape $\lambda$, we denote by $p_i$ the number
of occurrence of the number $i$ in the semi-standard tableau $T^{\lambda}$.
The weight of $T^{\lambda}$ is then defined as the monomial
$$
u^{T^{\lambda}} = u_1^{p_1} u_2^{p_2} ...u_n^{p_n}. 
$$
For a given integer partition $\lambda$ of length at most $n$, 
the Schur function $S_{\lambda}(u_1,...,u_n)$
is given by
$$
S_{\lambda}(u_1,u_2,...,u_n) = \sum_{T^{\lambda}} u^{T^{\lambda}},
$$
where the sum run over all the semi-standard tableaux of shape $\lambda$ and entries 
at most $n$.

\begin{example}
Consider the partition $\lambda = (2,1)$ and $n = 3$. Then, the
Young diagram of $\lambda$ and the complete list of semi-standard tableaux 
of shape $\lambda$ are
$$
\young(11,2) \quad \young(11,3) \quad 
\young(12,3) \quad \young(13,2) \quad 
\young(12,2) \quad \young(13,3) \quad
\young(22,3) \quad \young(23,3)
$$
Therefore, the Schur function associated with the partition $\lambda$
is given by 
$$
S_{\lambda}(u_1,u_2,u_3) = u_1^2 u_2 + u_1^2 u_3 + 2 u_1 u_2 u_3 +
u_2^2 u_3 + u_2 u_3^2 + u_1 u_2^2 + u_1 u_3^2.
$$
\end{example}
\vskip 0.2 cm
\noindent{\bf{Giambelli formula: }}
The Young diagram of an integer partition $\lambda$ is said to be a \textit{hook diagram} 
if the partition $\lambda$ is of the shape $\lambda = (p+1,1^{q})$ i.e., 
\vskip 1cm \hskip 4.277cm 
$q \left\{\Yvcentermath1 {\yng(1,1,1,1)}\right.$
\vskip -2.55 cm \hskip 4.82cm  
$\overbrace{\Yvcentermath1 {\yng(5)}}^{p+1}$ 
\vskip 1.8 cm
In Frobenius notation, we write the partition $\lambda$ as $(p|q)$.
Expanding the Jacobi-Trudi formula (\ref{jacobi-trudi}) along the top row,
shows that the Schur function associated with the partition $(p|q)$ is given by 
\begin{equation*}
S_{(p|q)} = h_{p+1} e_{q} - h_{p+2}e_{q-1} + .... + (-1)^{q} h_{p+q+1}.
\end{equation*}
Any integer partition $\lambda$ can be represented in Frobenius notation as 
\begin{equation}\label{frobenius}
\lambda = ( \alpha_1, ..., \alpha_r|\beta_1,..., \beta_r),
\end{equation}
where $r$ is the number of boxes in the main diagonal of the Young 
diagram of $\lambda$ and for $i=1,...,r$, $\alpha_i$ (resp. $\beta_i$)
is the number of boxes in the $i$th row (resp. the $i$th column) of 
$\lambda$ to the right of $(i,i)$ (resp. below $(i,i)$). For example 
the partition $\lambda = (6,4,2,1^{2})$, depicted below,
can be written in Frobenius notation as $\lambda = (5,2|4,1)$ 
\newcommand{\bbbox}{\blacksquare}
\newcommand{\nth}{\mbox{}}
$$
\lambda = {\Yvcentermath1 \young(\bbbox\nth\nth\nth\nth\nth,\nth\bbbox\nth\nth,\nth\nth,\nth,\nth)} 
$$ 
With the decomposition (\ref{frobenius}) of $\lambda$ in hook diagrams,
the Giambelli formula states that 
\begin{equation*}
S_{\lambda} = \det ( S_{\left( \alpha_i | \beta_j \right)} )_{1\leq
i,j \leq r}
\end{equation*}
We will adopt the convention that $S_{(\alpha|\beta)} \equiv 0$ if $\alpha$ or $\beta$
are negatives.
\vskip 0.2 cm
\noindent{\bf{Hook length formula: }}
The {\it {hook-length}} of an integer partition $\lambda$ at a box $x = (i,j)$ is defined
to be $h(x) = \lambda_i + \lambda'_{i} - i - j + 1$, where $\lambda'$ is the conjugate
partition of $\lambda$. In other word 
the hook-length at the box $x$ is the number of boxes that are in the same row 
to the right of it plus those boxes in the same column below it,
plus one (for the box itself). The {\it content} of the partition $\lambda$ at the box 
$x = (i,j)$ is defined as $c(x) = j-i$. The hook-length and the content of every box 
of the partition $\lambda = (5,4,2)$ is given as  
\newcommand{\negone}{\mbox{-1}}
\newcommand{\negtwo}{\mbox{-2}}
$$ 
h(\lambda) = {\Yvcentermath1 \young(76431,5421,21)} 
\qquad
Content(\lambda) = {\Yvcentermath1 \young(01234,\negone 012,\negtwo \negone)} 
$$
With these notations, the number of semi-standard 
tableaux of shape $\lambda$ with entries at most $n$ is given by 
the so-called hook-length formula as 
\begin{equation}\label{combinatorialhook}
f_{\lambda}(n) = S_{\lambda}(\overbrace{1,1,...,1}^{n}) =
\prod_{x \in \lambda}\frac{n+c(x)}{h(x)}.     
\end{equation}
In particular, we have the following useful hook-length formulas 
\begin{equation}\label{normalization1}
f_{(1^r)}(n) = \binom{n}{r}, \quad f_{(r)}(n) = \binom{n+r-1}{r} 
\end{equation}
and 
\begin{equation}\label{normalization2}
f_{(p|q)}(n) = \frac{n}{p+q+1} \binom{n+p}{p} \binom{n-1}{q}.
\end{equation}
We will adopt the convention that for every integer $n$, the hook-length 
of the empty partition $\lambda = (0,0,...)$ is given by  
$f_{\emptyset}(n) = 1$.
We can also show that for any real partition $\lambda$, we have    
\begin{equation}\label{generalhook}
f_{\lambda}(n) = \frac{\prod_{1\leq j < k \leq n}
(\lambda_{j} - \lambda_{k} -j + k)}{\prod_{j=1}^{n}(j-1)!}.
\end{equation}
\vskip 0.2 cm
\noindent{\bf{Skew Schur functions and Branching rule: }}
Given two integer partitions, $\lambda$ and $\mu$, such that $\mu \subset \lambda$ i.e.,
$\mu_i \leq \lambda_i$, $i \geq 1$, a Young diagram with skew shape $\lambda/\mu$ is the
Young diagram of $\lambda$ with the Young diagram of $\mu$ removed from its upper left-hand
corner. Note that the standard shape $\lambda$ is just the skew shape $\lambda/\mu$ 
with $\mu = \emptyset$. For example, we have 
$$
\newcommand{\nothings}{\mbox{}}
(4,3,1)/(2,1) ={\Yvcentermath1 
\young(::\nothings\nothings,:\nothings\nothings,\nothings)}
$$
The skew Schur function $S_{\lambda/\mu}$ is defined as 
$$
S_{\lambda/\mu}(u_1,u_2,...,u_n) = \sum_{T^{\lambda/\mu}} x^{T^{\lambda/\mu}}
$$
where the sum run over all the semi-standard tableaux of shape $\lambda/\mu$
and entries at most $n$. Skew Schur functions have a determinant expression as
$$
S_{\lambda/\mu} = det(h_{\lambda_i -\mu_j - i + j})_{1\leq i,j \leq n}.
$$ 
Using the skew Schur functions, we have the following branching rule
\begin{equation*}
S_{\lambda}(u_1,...,u_{j},u_{j+1},...,u_{n}) = 
\sum_{\mu \subset \lambda} S_{\mu}(u_1,...,u_j) 
S_{\lambda/\mu}(u_{j+1},...,u_{n}).
\end{equation*} 
Particularly interesting for this work, the following two branching rules 
\begin{equation}\label{branchingrule1}
S_{\lambda}(u_1,...,u_{n-1},u_n) = 
\sum_{\mu \prec \lambda} S_{\mu}(u_1,...,u_{n-1}) u_{n}^{|\lambda|-|\mu|},
\end{equation}
where the sum is over are the interlacing partitions $\mu$ i.e., partition 
$\mu = (\mu_1,...,$ $\mu_{n-1})$ such that
\begin{equation*}
\lambda_1 \geq \mu_1 \geq \lambda_2 \geq ...\mu_{n-1} \geq \lambda_n,  
\end{equation*}
and 
\begin{equation}\label{branchingrule2}
S_{\lambda}(u_1,...,u_{n-1},u_n) = 
\sum_{j=0}^{\lambda_1} S_{\lambda/{(j)}}(u_1,...,u_{n-1}) u_{n}^{j}.
\end{equation}
\vskip 0.2 cm
\noindent{\bf{Splitting formula for Schur functions: }}
The following splitting formula for Schur functions will be fundamental 
in this work. For integer partitions, it can be proved using the 
branching rule of Schur functions. For real partitions, its proof is 
explicit in the treatment given in \cite{biane} even though such a proof
is given only for integer partitions. To be rigorous, we will repeat
the exact same proof here and only emphasize the part which makes the 
arguments of the proof valid for real partitions too.   
\begin{proposition}
Let $\eta = (\lambda_1, . . . , \lambda_k, \mu_1, . . . , \mu_h)$
be a real partition. Then we have
\begin{equation}\label{split}
\lim_{\epsilon\to 0} \frac{S_{\eta}(z_1,...,z_k,\epsilon y_1,...,\epsilon y_h)}
{\epsilon^{|\mu|}} = 
S_{\lambda}(z_1,...,z_k) S_{\mu}(y_1,...,y_h)
\end{equation}
where $\lambda$ and $\mu$ are the real partitions
$\lambda = (\lambda_1,...,\lambda_k)$ and $\mu = (\mu_1,...,\mu_h),$
where $|\mu|$ denotes $\mu_1 + \mu_2 +....+\mu_h$.
\end{proposition}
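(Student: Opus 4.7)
The plan is to work directly from the determinantal definition (\ref{schurdeterminant}) of the Schur function and track the leading order in $\epsilon$ by a Laplace expansion along the bottom $h$ rows. Setting $n = k+h$ and substituting $u_i = z_i$ for $i \leq k$ and $u_{k+l} = \epsilon y_l$ for $l = 1, \ldots, h$, the $(k+h) \times (k+h)$ matrix $(u_i^{\eta_j + n - j})$ has each of its last $h$ rows carrying, in column $j$, an explicit factor $\epsilon^{\eta_j + n - j}$.

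Next I would expand the determinant by choosing a size-$h$ subset $J = \{j_1 < \cdots < j_h\}$ of columns for the bottom block, so that each term in the expansion picks up a factor $\epsilon^{\sum_{s} (\eta_{j_s} + n - j_s)}$. Here I would invoke the defining inequality $\lambda_1 > \lambda_2 - 1 > \cdots > \lambda_n - (n-1) > -n$ of a real partition: it says precisely that the sequence $\eta_j + n - j$ is strictly decreasing in $j$, hence the unique minimizing choice of $J$ is $J = \{k+1, \ldots, n\}$, and every other term is strictly smaller in power of $\epsilon$. For this optimal $J$ the $\epsilon$-exponent works out to $|\mu| + \binom{h}{2}$, since $\eta_{k+s} = \mu_s$ for $s = 1, \ldots, h$.

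Then I would identify the two surviving sub-determinants. The bottom $h \times h$ block becomes $\epsilon^{|\mu|+\binom{h}{2}} \det(y_i^{\mu_s + h - s})$, which after dividing by $\prod_{l<m}(y_l-y_m)$ yields $S_\mu(y_1,\ldots,y_h)$. The complementary top $k \times k$ block has entries $z_i^{\lambda_j + k - j + h}$; factoring $z_i^h$ from each row and dividing by $\prod_{i<j}(z_i-z_j)$ yields $\prod_i z_i^{\,h} \cdot S_\lambda(z_1,\ldots,z_k)$. The denominator $\prod_{i<j}(u_i - u_j)$ splits cleanly into three pieces: a $z$-Vandermonde, a $y$-Vandermonde scaled by $\epsilon^{\binom{h}{2}}$, and a mixed part $\prod_{i,l}(z_i - \epsilon y_l) \to \prod_i z_i^{\,h}$. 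The $\prod z_i^{\,h}$ factors cancel, the $\epsilon^{\binom{h}{2}}$ factors cancel, leaving exactly $\epsilon^{|\mu|}\, S_\lambda(z_1,\ldots,z_k)\, S_\mu(y_1,\ldots,y_h)$ to leading order, which gives (\ref{split}) after dividing by $\epsilon^{|\mu|}$ and passing to the limit.

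The only nontrivial point is the selection of the dominant term, i.e.\ proving that among all $\binom{n}{h}$ column choices the minimum of $\sum_{s=1}^h (\eta_{j_s} + n - j_s)$ is attained uniquely at $J=\{k+1,\ldots,n\}$. For integer partitions this is immediate from the weakly decreasing hypothesis on parts, but in the real case this is exactly where one must use the sharpened definition of a real partition; this is also the step emphasized in \cite{biane} as the reason the argument extends verbatim from integer to real partitions. All remaining computations are bookkeeping on Vandermonde factors and are routine.
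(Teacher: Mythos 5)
Your proposal is correct and follows essentially the same route as the paper: a Laplace expansion of the determinant in the determinantal formula, identification of the unique dominant column choice via the strict decrease of $\eta_j + n - j$ guaranteed by the real-partition condition, and Vandermonde bookkeeping. The only cosmetic differences are that you expand along the bottom $h$ rows rather than the top $k$, and you split the Vandermonde denominator by hand where the paper instead applies its asymptotic formula (\ref{split2}) uniformly to both the numerator partition $\eta$ and the zero partition.
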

\begin{proof}
Without loss of generality, we can assume that the components of the vector 
$(z_1,...,z_k,y_1,...,y_h)$ are pairwise distinct. Consider, now, 
a generic real partition $\gamma = (\alpha_1,\alpha_2,...,\alpha_k,\beta_1,...,\beta_h)$
and let us study the behavior of the function $\Delta_{\gamma}(z,\epsilon y)$
as the real number $\epsilon$ converges to zero. The function  
$\Delta_{\gamma}(z,\epsilon y)$ is defined as the determinant of the $(n\times n)$ 
matrix $V$ defined as 
$$
V_{ij} = z_{i}^{\gamma_j+k+h-j} \quad \textnormal{for}
\quad 1 \leq i \leq k, \quad j=1,...,n
$$
and
$$
V_{ij} = (\epsilon y_{i-k})^{\gamma_j+k+h-j} \quad \textnormal{for}
\quad k < i \leq n, \quad  j=1,...,n.
$$ 
Consider the Laplacian expansion of the determinant of $V$ along the first 
$k$ rows 
\begin{equation}\label{split1}
\det V = \sum_{I \subset [k+h], |I|=k} \rho(I,[k]) 
\det V_{[k],I} \det V_{[k]^{c},I^{c}},
\end{equation}
For any set of indices $I$ with $|I| = k$, the determinant $\det V_{[k]^{c},I^{c}}$
in (\ref{split1}) has an exposed factor of $\epsilon^{\sum_{j \in I^{c}} \gamma_{j}+k+h-j}$.
In particular for $I = [k]$, the factor is given by $\epsilon^{|\beta| + \binom{h}{2}}$.
The fact that $\gamma$ is a real partition shows, in particular,
that for any $I$ such that 
$|I|=k$ and $I \neq [k]$, we have 
$$
\sum_{j \in I^{c}} \gamma_{j}+k+h-j > |\beta| + \binom{h}{2}.
$$
Therefore, we have 
\begin{equation}\label{split2}
\begin{split}
\frac{\Delta_{\gamma}(z,\epsilon y)}{\epsilon^{|\beta| + \binom{h}{2}}} & =
\det(z_{i}^{\alpha_j+k+h-j})_{1\leq i,j\leq k} 
\det(y_{i}^{\beta_j+h-j})_{1\leq i,j\leq h} + O(\epsilon^{\tau}) \\
& = \left( \prod_{i=1}^{k} {z_i}^{h}\right) \Delta_{\alpha}(z) 
\Delta_{\beta}(y) + O(\epsilon^{\tau}), 
\end{split}
\end{equation}
where $\tau$ is a strictly positive number. Applying Equation (\ref{split2})
to the real partition $\eta$ and the zero partition lead to (\ref{split}).
\end{proof}

\section{Chebyshev blossom in \muntz spaces and Chebyshev-Bernstein bases}
In this section, we review the needed results that we have obtained in
\cite{aithaddou1} on Chebyshev blossom in \muntz spaces. We recall the expression of the
Chebyshev blossom in terms of Schur functions, we give the expression of 
the pseudo-affinity factor, as well as an explicit expression of 
the Chebyshev-Bernstein bases.  
\vskip 0.2 cm 
\noindent{{\bf {Chebyshev blossom: }}}
Let $\Lambda = (r_0,r_1,...,r_{n})$ be a sequence of $(n+1)$ real
numbers such that $0 =r_0 < r_1 < ... < r_n$ and let $I = [a,b]$ be
a non-empty real interval such that $0<a<b$. The function 
\begin{equation}\label{chebyshevcurve}
\phi(t)=(t^{r_1},t^{r_2},...,t^{r_n})^{T}  
\end{equation}
is a {\it{Chebyshev function of order $n$}} on $I$ \cite{mazure0}.
Therefore, if we denote by $Osc_{i}\phi(t)$ the osculating flat of order $i$
of the function $\phi$ at the point $t$, i.e.,
$$
Osc_i\phi(t) = \{\phi(t)+\alpha_1 \phi'(t)+... + \alpha_i \phi^{(i)}(t) 
\quad | \quad 
\alpha_1,...,\alpha_i \in \mathbb{R} \}, 
$$
then, for all distinct points $\tau_1,..., \tau_r$ in the interval $I$
and all positive integers $\mu_1, ..., \mu_r$ such that 
$\sum_{k = 1}^r \mu_k = m \leq n$, we have 
\begin{equation}\label{intersection}
dim \cap_{k = 1}^r Osc_{n-\mu_k}\phi(\tau_k) = n-m.
\end{equation}
In particular, if in equation (\ref{intersection}) we have $m=n$,
then the intersection consists of a single point in $\mathbb{R}^{n}$,
which we label as 
$\varphi(\tau_1^{\mu_1},\tau_2^{\mu_2},...,\tau_r^{\mu_r})$, i.e.,
$$
\varphi(\tau_1^{\mu_1},\tau_2^{\mu_2},...,\tau_r^{\mu_r}) = 
\cap_{k = 1}^r Osc_{n - \mu_k} \varphi ( \tau_k ).
$$
The previous construction provides us with a function 
$\varphi = (\varphi_1,\varphi_2, ..., \varphi_n)^{T}$ 
from $I^n$ into $\mathbb{R}^{n}$ with the following 
straightforward properties: The function $\varphi$ is symmetric 
in its arguments and its restriction to the diagonal of $I^n$ 
is equal to $\phi$ i.e., $\varphi(t,t,...,t) = \phi(t)$. 
The function $\varphi$ is called the {\it{Chebyshev blossom}}
of the function $\phi$. To give an explicit expression of 
the Chebyshev blossom $\varphi$ of the function $\phi$, we first 
associated a real partition $\lambda$ to the sequence $\Lambda$ as 
follows

\begin{definition}\label{realpartition}
For a sequence $\Lambda = (r_0,r_1,...,r_{n})$ of $(n+1)$ real
numbers such that $0 =r_0 < r_1 < ... < r_n$, we define the real 
partition $\lambda = (\lambda_1,...,\lambda_{n})$
associated with the sequence $\Lambda$ by
\begin{equation}\label{realpartition}
\lambda_k = r_n - r_{k-1} - (n-k+1) 
\quad \textnormal{for} \quad k=1,...,n.
\end{equation}
\end{definition}

We need also to define a sequence of real partitions associated with 
a single real partition $\lambda$ as follows 

\begin{definition}
Let $\lambda = (\lambda_1,\lambda_2,...,\lambda_n)$ be a
real partition. The \muntz tableau associated with the
partition $\lambda$ is given by a sequence of $(n+1)$ real
partitions $(\lambda^{(0)},\lambda^{(1)},\lambda^{(2)},...,
\lambda^{(n)})$ defined as follows:   
$$
\lambda^{(0)} =(\lambda_2,\lambda_3,...,\lambda_n),
$$
for $\; i=1,2,...n-1$
$$
\lambda^{(i)} = (\lambda_1 + 1,\lambda_2 + 1,...,\lambda_{i}+1,
\lambda_{i+2},...,\lambda_n)
$$
and 
$$
\lambda^{(n)} = (\lambda_1+1,\lambda_2+1,...,\lambda_n+1).
$$
\end{definition}

In the case of integer partitions, a way to remember the construction 
of the \muntz tableau is to remark that the partition $\lambda^{(0)}$
is obtained form the partition $\lambda$ by deleting the first row.
The partition $\lambda^{(i)}$ is obtained by adding 
a box to the first $i$ rows of the partition $\lambda$,
deleting the $i+1$ row and keeping all the other rows the same.

For a real partition $\lambda$, the real partition 
$\lambda^{(0)}$ in the \muntz tableau associated with $\lambda$ 
will play an important role in this work and will be called 
{\it {the bottom partition of $\lambda$}}.

\begin{example}
The \muntz tableau associated with the partition $\lambda = (4,2)$ and $n =3$ 
is depicted as
$$  
\lambda = {\Yvcentermath1 \yng(4,2)} \quad 
\lambda^{(0)} = {\Yvcentermath1 \yng(2)} \quad
\lambda^{(1)} = {\Yvcentermath1 \yng(5)} \quad
\lambda^{(2)} = {\Yvcentermath1 \yng(5,3)} \quad
$$
$$
\lambda^{(3)} = {\Yvcentermath1 \yng(5,3,1)}
$$   
\end{example} 

{\bf{Notations 1: }}To a sequence $\Lambda = (0=r_0,r_1,r_2,...,r_n)$ 
of strictly increasing real numbers, we can associate the Chebyshev 
curve given in (\ref{chebyshevcurve}). We can also associated the 
\muntz space $E = span(1,t^{r_1},t^{r_2},...,t^{r_n})$. 
To emphasize the dependence of $E$ on the sequence $\Lambda$, we 
will denote this space as $E_{\Lambda}(n)$. From definition \ref{realpartition}, 
we can also associate a real partition $\lambda$ to the sequence 
$\Lambda$. Therefore, we will also denote the space $E$ as 
$\mathcal{E}_\lambda$, if we want to emphasize more the real 
partition $\lambda$ than the sequence $\Lambda$. 
In case, we want to emphasize both the sequence 
$\Lambda$ and the partition $\lambda$, we will write 
$E_{\Lambda}(n) = \mathcal{E}_{\lambda}(n)$ 
in the corresponding statement.
\vskip 0.2 cm
With the definitions aboves, the following explicit expression of 
the Chebyshev blossom of the Chebyshev curve $\phi$ given in 
(\ref{chebyshevcurve}) has been proven in \cite{aithaddou1}
\begin{theorem}\label{theoremblossom}
For any sequence $(u_1,u_2,...,u_n)\in]0,+\infty[^n$,
the blossom $\varphi = (\varphi_1,\varphi_2,...,\varphi_n)^{T}$
of the Chebyshev curve $\phi$ given in (\ref{chebyshevcurve})
is given by
$$
\varphi_i ( u_1, u_2, ..., u_n ) = \frac{f_{\lambda^{(0)}} ( n )
S_{\lambda^{(i)}}(u_1,u_2,..., u_n)}
{f_{\lambda^{(i)}}(n) S_{\lambda^{(0)}}(u_1,u_2,...,u_n)},
$$
where $(\lambda^{(0)},\lambda^{(1)},...,\lambda^{(n)})$ 
is the \muntz tableau associated with the real partition 
$\lambda$, which in turn $\lambda$ is the real partition associated with 
the sequence $\Lambda$. 
\end{theorem}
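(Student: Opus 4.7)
The plan is to verify that the proposed Schur-function formula satisfies the three characterizing properties of the Chebyshev blossom: symmetry in its $n$ arguments, the diagonal identity $\varphi(t,\ldots,t) = \phi(t)$, and membership of $\varphi(\tau_1^{\mu_1},\ldots,\tau_r^{\mu_r})$ in $\bigcap_{k=1}^{r} Osc_{n-\mu_k}\phi(\tau_k)$ whenever $\sum_k \mu_k = n$. By (\ref{intersection}) the latter intersection is a single point, so any function $\varphi$ meeting these three items is automatically \emph{the} Chebyshev blossom.

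Symmetry is immediate: both $S_{\lambda^{(i)}}(u_1,\ldots,u_n)$ and $S_{\lambda^{(0)}}(u_1,\ldots,u_n)$ are symmetric in the $u_j$ by the determinantal definition (\ref{schurdeterminant}), with L'Hospital's rule handling coincidences. For the diagonal identity, I would use the homogeneity $S_\lambda(t,t,\ldots,t) = t^{|\lambda|} S_\lambda(1,1,\ldots,1) = t^{|\lambda|} f_\lambda(n)$, which collapses the formula to $\varphi_i(t,\ldots,t) = t^{|\lambda^{(i)}|-|\lambda^{(0)}|}$. A direct bookkeeping from Definition \ref{realpartition} yields $|\lambda^{(i)}|-|\lambda^{(0)}| = \lambda_1 - \lambda_{i+1} + i = r_i$ for $i=1,\ldots,n-1$ and $|\lambda^{(n)}|-|\lambda^{(0)}| = \lambda_1 + n = r_n$, so $\varphi(t,\ldots,t) = \phi(t)$.

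The main obstacle is the osculating condition, and by the dimension count (\ref{intersection}) it suffices to establish the one-sided statement: inserting $\tau$ with multiplicity $\mu$ into the arguments of $\varphi$ produces a vector that lies in $Osc_{n-\mu}\phi(\tau)$. My approach is to apply the iterated branching rule (\ref{branchingrule1}) to expand both $S_{\lambda^{(i)}}(u_1,\ldots,u_{n-\mu},\tau,\ldots,\tau)$ and $S_{\lambda^{(0)}}(u_1,\ldots,u_{n-\mu},\tau,\ldots,\tau)$ as polynomials in $\tau$ whose coefficients are Schur functions in $u_1,\ldots,u_{n-\mu}$. The delicate point is that the particular way $\lambda^{(i)}$ is obtained from $\lambda$ (adding one box to the first $i$ rows and suppressing row $i+1$) should force the exponents of $\tau$ that survive the quotient $S_{\lambda^{(i)}}/S_{\lambda^{(0)}}$ to match the \muntz exponents $r_0, r_1, \ldots, r_{n-\mu}$, so that $\varphi(u_1,\ldots,u_{n-\mu},\tau,\ldots,\tau)$ becomes a linear combination of $\phi(\tau),\phi'(\tau),\ldots,\phi^{(n-\mu)}(\tau)$ with coefficients depending only on $u_1,\ldots,u_{n-\mu}$. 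I expect this matching of shifted partitions with \muntz exponents to be the hardest step and would proceed by induction on $n$, reducing the $(n,\mu)$-case to the $(n-1,\mu-1)$-case via a single application of the branching rule.
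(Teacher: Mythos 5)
The paper itself offers no proof of this theorem; it is quoted from \cite{aithaddou1}, so there is no internal argument to compare against. Your overall skeleton is nevertheless a legitimate one: since (\ref{intersection}) guarantees the intersection of osculating flats is a single point, it suffices to show that the proposed formula, evaluated at $(\tau_1^{\mu_1},\ldots,\tau_r^{\mu_r})$, lies in each $Osc_{n-\mu_k}\phi(\tau_k)$, and by symmetry of Schur functions this reduces to one block $(U,\tau^{\mu})$. Your symmetry and diagonal verifications are correct and complete; in particular the bookkeeping $|\lambda^{(i)}|-|\lambda^{(0)}|=\lambda_1-\lambda_{i+1}+i=r_i$ checks out against Definition \ref{realpartition}.

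The gap is in the osculating step, which is the entire content of the theorem, and your sketch of it would not go through as described. First, the branching rule (\ref{branchingrule1}) you invoke is a statement about \emph{integer} partitions: for a real partition $\lambda$ the function $S_{\lambda}(u_1,\ldots,u_{n-1},\tau)$ is, by (\ref{schurdeterminant}), a ratio whose numerator involves non-integer powers $\tau^{\lambda_j+n-j}$ and whose denominator contains $\prod_i(u_i-\tau)$; it is not a finite sum of powers of $\tau$ with Schur-function coefficients, so the expansion you propose does not exist in the generality of the theorem (arbitrary real exponents $r_i$). Second, even in the integer case the envisioned structure is wrong: the quotient $S_{\lambda^{(i)}}(U,\tau^{\mu})/S_{\lambda^{(0)}}(U,\tau^{\mu})$ is a genuine rational function of $\tau$ (the denominator is a nonconstant polynomial in $\tau$), so ``the exponents of $\tau$ that survive the quotient'' is not a meaningful notion, and $\varphi(U,\tau^{\mu})$ cannot be a combination of $\phi(\tau),\phi'(\tau),\ldots,\phi^{(n-\mu)}(\tau)$ with coefficients \emph{independent of} $\tau$ (already for $span(1,t,t^2)$ the coefficient is $(u-\tau)/2$). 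What must actually be shown is that, for each fixed $\tau$, the overdetermined linear system $\varphi(U,\tau^{\mu})-\phi(\tau)\in span(\phi'(\tau),\ldots,\phi^{(n-\mu)}(\tau))$ is consistent, i.e.\ that all $(n-\mu+1)\times(n-\mu+1)$ minors of the matrix $[\varphi(U,\tau^{\mu})-\phi(\tau)\,|\,\phi'(\tau)\,|\,\cdots\,|\,\phi^{(n-\mu)}(\tau)]$ vanish; after clearing the common denominator $S_{\lambda^{(0)}}(U,\tau^{\mu})$ this amounts to identities among confluent generalized Vandermonde determinants (Sylvester/Pl\"ucker-type relations), or, more directly, one solves the defining linear system for the intersection point by Cramer's rule and recognizes the resulting determinant ratios as the Schur functions of (\ref{schurdeterminant}) with repeated arguments. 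None of this is supplied by your induction-on-$n$ plan, so the proof is not complete.
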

\vskip 0.2 cm 
\noindent{{\bf {The pseudo-affinity property: }}}Another fundamental property of 
Chebyshev blossom is the notion of pseudo-affinity, which states that for 
any Chebyshev curve $\phi$ on an interval $I$, there exists a function $\alpha$ such 
that for any distinct numbers $a$ and $b$ in the interval $I$, and for any $t \in I$, we have      
\begin{equation}\label{pseudoaffinity}
\varphi (u_1,...,u_{n-1},t)=\left(1-\alpha(t)\right)\varphi(u_1,...,u_{n-1},a) +
\alpha(t) \varphi(u_1,...,u_{n-1},b),
\end{equation}
where $\varphi$ is the Chebyshev blossom of the function $\phi$. 
In general, the function $\alpha$ depends on $a, b$,
the real numbers $u_i,i=1,...,n-1$ as well as the parameter $t$.
To stress this dependence, we will often write the 
pseudo-affinity factor as $\alpha(u_1,...,u_{n-1};a,b,t)$. 
In the case of the Chebyshev curve given in (\ref{chebyshevcurve}), we 
can give an explicit expression of the pseudo-affinity factor 
as follows \cite{aithaddou1}
\begin{theorem}\label{pseudotheorem}
The pseudo-affinity factor of the \muntz space $\mathcal{E}_{\lambda}(n)$
associated with a real partition $\lambda= (\lambda_1,...,\lambda_n)$ is given by 
$$
\alpha(U;a,b,t) = ( \frac{t-a}{b-a} ) \frac{S_{\lambda}(U,a,t) S_{\lambda^{(0)}}(U,b)}
{S_{\lambda}(U,a,b) S_{\lambda^{(0)}}(U,t)}, 
$$
where $U$ is a sequence of strictly positive real numbers $U = (u_1,...,u_{n-1})$ and 
$\lambda^{(0)}$ is the bottom partition of $\lambda$.
\end{theorem}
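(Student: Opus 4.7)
The plan is to reduce the statement to a single Plücker-like Schur identity and prove that identity by applying the Desnanot--Jacobi (Dodgson) determinantal identity. Because Chebyshev blossoming is genuinely pseudo-affine (equation (\ref{pseudoaffinity})), the scalar $\alpha(U;a,b,t)$ is independent of which coordinate of $\varphi$ is used to extract it; so I would compute it from the $n$-th coordinate, which is the cleanest to work with since $\lambda^{(n)}=\lambda+(1^n)$ satisfies $S_{\lambda^{(n)}}(v_1,\dots,v_n)=(v_1\cdots v_n)S_\lambda(v_1,\dots,v_n)$ by a one-line manipulation of the determinantal definition (\ref{schurdeterminant}).

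Substituting $\varphi_n(U,t)=\frac{f_{\lambda^{(0)}}(n)}{f_{\lambda^{(n)}}(n)}\cdot\frac{S_{\lambda^{(n)}}(U,t)}{S_{\lambda^{(0)}}(U,t)}$ from Theorem \ref{theoremblossom} into the ratio $\alpha=[\varphi_n(U,t)-\varphi_n(U,a)]/[\varphi_n(U,b)-\varphi_n(U,a)]$, the normalization constant cancels and the claim reduces to the factorization
\begin{equation*}
S_{\lambda^{(n)}}(U,t)\,S_{\lambda^{(0)}}(U,a)-S_{\lambda^{(n)}}(U,a)\,S_{\lambda^{(0)}}(U,t)
=(t-a)\Bigl(\prod_{k=1}^{n-1}u_k\Bigr)S_{\lambda^{(0)}}(U)\,S_\lambda(U,a,t).
\end{equation*}
Granting this factorization, the product $(\prod u_k)\,S_{\lambda^{(0)}}(U)$ occurs identically in the numerator (with $t$) and denominator (with $b$) of $\alpha$ and cancels, leaving exactly the advertised formula.

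To establish the factorization I would apply Desnanot--Jacobi to the $(n+1)\times(n+1)$ monomial matrix $M_{ij}=w_i^{e_j}$ with rows $(w_1,\dots,w_{n+1})=(u_1,\dots,u_{n-1},a,t)$ and exponents $(e_1,\dots,e_{n+1})=(\lambda_1+n,\lambda_2+n-1,\dots,\lambda_n+1,0)$, deleting the two bottom rows ($a,t$) and the outermost columns ($1$ and $n+1$). By (\ref{schurdeterminant}), each of the six minors that appear is a Schur function of one of the Müntz-tableau partitions $\lambda,\lambda^{(0)},\lambda^{(n)}$ times the appropriate Vandermonde; e.g.\ $\det M=S_\lambda(U,a,t)V(U,a,t)$ and $\det M^{n+1}_{n+1}=S_{\lambda^{(n)}}(U,a)V(U,a)$. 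The crucial subtlety is the smallest minor $\det M^{n,n+1}_{1,n+1}$: deleting the first column shifts every surviving exponent up by one, replacing $\lambda^{(0)}$ by $\lambda^{(0)}+(1^{n-1})$ and contributing the extra factor $\prod_{k=1}^{n-1}u_k$. Collapsing the Vandermondes by the elementary identity $V(U,a,t)V(U)=(a-t)V(U,a)V(U,t)$ then yields the factorization with the correct sign.

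The main obstacle will be the careful exponent-to-partition bookkeeping after each row/column deletion--most of all, recognizing the column-shift that produces the $\prod u_k$ factor and tracking the sign that converts $(a-t)$ into $(t-a)$. Since both the Desnanot--Jacobi identity and the determinantal definition (\ref{schurdeterminant}) are purely algebraic, the proof works verbatim for real partitions, as required by the statement.
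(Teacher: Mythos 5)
The paper itself gives no proof of Theorem \ref{pseudotheorem}; it is imported verbatim from \cite{aithaddou1}, so there is no in-document argument to compare yours against. On its own merits, your proof is correct and self-contained given Theorem \ref{theoremblossom} and the pseudo-affinity property (\ref{pseudoaffinity}). I checked the bookkeeping: with exponents $e_j=\lambda_j+(n+1)-j$ (and $\lambda_{n+1}=0$) one indeed gets $\det M=S_{\lambda}(U,a,t)V(U,a,t)$, $\det M^{n}_{1}=S_{\lambda^{(0)}}(U,t)V(U,t)$, $\det M^{n+1}_{n+1}=S_{\lambda^{(n)}}(U,a)V(U,a)$, and the doubly-deleted minor carries the partition $\lambda^{(0)}+(1^{n-1})$, whence the factor $\prod_{k}u_k\,S_{\lambda^{(0)}}(U)$; the Desnanot--Jacobi identity $\det M\,\det M^{p,q}_{r,s}=\det M^{p}_{r}\det M^{q}_{s}-\det M^{p}_{s}\det M^{q}_{r}$ holds with exactly those signs for non-adjacent rows and columns, and together with $V(U,a,t)V(U)=(a-t)V(U,a)V(U,t)$ it yields your factorization with the sign $(t-a)$ as claimed; nothing in the computation uses integrality of the exponents, so the real-partition case is covered. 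Two small points you should make explicit: (i) extracting $\alpha$ from the $n$-th coordinate requires $\varphi_n(U,b)\neq\varphi_n(U,a)$, which follows from your own factorization at $t=b$ together with the strict positivity of $S_{\mu}(v_1,\dots,v_m)$ for positive arguments and real partitions $\mu$ (a generalized Vandermonde determinant with strictly decreasing real exponents does not vanish at distinct positive nodes, and definition (\ref{schurdeterminant}) then gives a positive ratio); (ii) the identity $S_{\lambda+(1^n)}=(v_1\cdots v_n)S_{\lambda}$ should be stated as following from factoring $v_i$ out of each row of the determinant in (\ref{schurdeterminant}). With those remarks added, the argument is complete.
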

\vskip 0.2 cm 
\noindent{{\bf {Chebyshev-Bernstein Basis: }}}Given two real numbers $a$ and $b$ 
such that ($0< a < b$), and denote by $\Pi_k$, $k=0,...,n$, the $(n+1)$
points defined as 
$$
\Pi_i = \varphi(a^{n-i},b^{i}), 
$$
where $\varphi$ is the Chebyshev blossom of the Chebyshev curve $\phi$
in (\ref{chebyshevcurve}). Denote by $\Lambda$ (resp. $\lambda$) the sequence 
(resp. the real partition) associated with the curve $\phi$. The points $\Pi_i$
are affinely independent in $\mathbb{R}^{n}$ \cite{mazure1}. Therefore, there exist
$(n+1)$ functions $B^{n}_{k,\lambda}, k=0,...,n$ such that for any $t \in I$
$$
\phi(t) = \sum_{k=0}^n B^{n}_{k,\lambda}(t)\Pi_i \quad \textnormal{and} 
\quad \sum_{k=0}^n B^{n}_{k,\lambda}(t)=1.
$$
The functions $B^{n}_{0,\lambda},...,B^{n}_{k,\lambda},..., B^{n}_{n,\lambda}$
form a basis of the \muntz space $E_{\Lambda}(n)= \mathcal{E}_{\lambda}(n)$,
called the Chebyshev-Bernstein basis of the space 
$E_{\Lambda}(n) = \mathcal{E}_{\lambda}(n) $ with respect to the interval $[a,b]$.
An explicit expression of the Chebyshev-Bernstein basis is given 
by \cite{aithaddou1}
\begin{theorem}\label{bernsteintheorem}
The Chebychev-Bernstein basis
$(B^{n}_{0,\lambda},B^{n}_{1,\lambda},...,B^{n}_{n,\lambda})$
of the \muntz space associated with a real partition 
$\lambda = (\lambda_1,\lambda_2,...,\lambda_n)$ 
over an interval $[a,b]$ is given by 
\begin{equation}\label{bernstein}
B^{n}_{k,\lambda}(t) = \frac{f_{\lambda}(n+1)}{f_{\lambda^{(0)}}(n)}
B^{n}_{k}(t) 
\frac{S_{\lambda^{(0)}}(a^{n-k},b^{k}) t^{\lambda_1}
S_{\lambda}(a^{n-k},b^{k},\frac{ab}{t})} 
{S_{\lambda}(a^{n+1-k},b^{k}) S_{\lambda}(a^{n-k}, b^{k+1})},
\end{equation}
where $B^{n}_{k}$ is the classical Bernstein basis of the polynomial space
over the interval $[a,b]$ and $\lambda^{(0)}$ is the bottom partition of $\lambda$.
\end{theorem}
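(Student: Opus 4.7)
The Chebyshev-Bernstein basis on $[a,b]$ is uniquely characterized by three properties: (i) each $B^{n}_{k,\lambda}$ lies in the \muntz space $\mathcal{E}_\lambda(n)$; (ii) $B^{n}_{k,\lambda}$ has a zero of order exactly $k$ at $a$ and order exactly $n-k$ at $b$; and (iii) the partition of unity $\sum_{k=0}^n B^{n}_{k,\lambda}(t)\equiv 1$ holds. My plan is to verify each of these for the closed form (\ref{bernstein}).

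For (i), I would expand $S_{\lambda}(u_1,\ldots,u_n,ab/t)$ by a Laplace expansion of the determinant in (\ref{schurdeterminant}) along the row corresponding to $u_{n+1}=ab/t$. The identification $\lambda_j=r_n-r_{j-1}-(n-j+1)$ from Definition \ref{realpartition} makes the column-$j$ exponent collapse to $\lambda_j+n+1-j=r_n-r_{j-1}$. After prefixing $t^{\lambda_1}=t^{r_n-n}$ and clearing the factor $\prod_{i=1}^n(u_i-ab/t)=t^{-n}\prod_i(u_i t-ab)$ from the Vandermonde denominator, the quantity $t^{\lambda_1}S_{\lambda}(u_1,\ldots,u_n,ab/t)\cdot\prod_i(u_i t-ab)$ becomes a linear combination of $t^{r_0},t^{r_1},\ldots,t^{r_n}$ with coefficients symmetric in the $u_i$. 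Specializing $u=(a^{n-k},b^{k})$ turns $\prod_i(u_i t-ab)$ into $a^{n-k}b^{k}(t-a)^{k}(t-b)^{n-k}$, which up to a nonzero constant is precisely the $(t-a)^{k}(b-t)^{n-k}$ factor in $B^{n}_{k}(t)$, so $B^{n}_{k,\lambda}\in\mathcal{E}_\lambda(n)$.

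For (ii), the zeros are provided by $B^{n}_k(t)$ itself; the rational Schur factor is finite and nonzero at the endpoints because at $t=a$ we have $ab/t=b$ and $S_\lambda(a^{n-k},b^{k},b)=S_\lambda(a^{n-k},b^{k+1})$ cancels its twin in the denominator of (\ref{bernstein}) (symmetrically at $t=b$). Moreover, $B^{n}_{0,\lambda}(a)=B^{n}_{n,\lambda}(b)=1$ follows from the same evaluation together with the homogeneity identity $S_\mu(x^p)=x^{|\mu|}f_\mu(p)$ and $|\lambda|=\lambda_1+|\lambda^{(0)}|$.

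For (iii), the main obstacle, a direct summation in $k$ would require a non-obvious Schur-type identity. Instead I would iterate the pseudo-affinity relation (\ref{pseudoaffinity}) starting from $\phi(t)=\varphi(t,\ldots,t)$: the resulting Chebyshev de Casteljau algorithm produces functions $\tilde B^{n}_{k,\lambda}(t)\in\mathcal{E}_\lambda(n)$ that by construction have the zero pattern of (ii) and satisfy $\sum_k\tilde B^{n}_{k,\lambda}\equiv 1$. The subspace of $\mathcal{E}_\lambda(n)$ with a zero of order $k$ at $a$ and $n-k$ at $b$ is one-dimensional, so (\ref{bernstein}) equals $c_k\,\tilde B^{n}_{k,\lambda}$ for some constant $c_k$. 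For $k=0$ and $k=n$ the boundary identities above give $c_0=c_n=1$; for interior $k$ the scalars $c_k$ are fixed by matching the coefficient of $(t-a)^{k}$ at $t=a$, equivalently by substituting (\ref{bernstein}) into $\phi(t)=\sum_k B^{n}_{k,\lambda}(t)\varphi(a^{n-k},b^{k})$ with the blossom formula of Theorem \ref{theoremblossom} and exploiting the affine independence of the $\Pi_k$, which reduces to Jacobi-Trudi-type identities. The principal difficulty is therefore the partition of unity: \muntz membership and the zero structure are direct verifications, while $\sum_k B^{n}_{k,\lambda}\equiv 1$ is opaque from the closed form and is most cleanly inferred from the de Casteljau construction together with the uniqueness of the basis with the prescribed zero pattern.
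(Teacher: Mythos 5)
First, a caveat about the ground truth: this paper does not prove Theorem~\ref{bernsteintheorem} at all --- it is recalled verbatim from \cite{aithaddou1}, so your proposal can only be judged on its own coherence, not against an in-paper argument. Your overall strategy (verify membership in $\mathcal{E}_\lambda(n)$, verify the zero pattern, verify normalization, then invoke the uniqueness characterization of \cite{mazure1}) is a legitimate route, and parts (i) and (ii) are essentially correct as sketched: the exponents in the column $j$ of the determinant for $S_\lambda(u_1,\dots,u_n,ab/t)$ do collapse to $r_n-r_{j-1}$ under Definition~\ref{realpartition}, the Laplace expansion along the $ab/t$ row together with the factor $\prod_i(u_it-ab)$ extracted from the Vandermonde denominator does exhibit $B^n_{k,\lambda}$ as a combination of $t^{r_0},\dots,t^{r_n}$, the cancellation $S_\lambda(a^{n-k},b^k,ab/t)\big|_{t=a}=S_\lambda(a^{n-k},b^{k+1})$ against the denominator shows the Schur factor is finite and nonzero at the endpoints (using positivity of generalized Vandermonde determinants at positive arguments), and the homogeneity computation giving $B^n_{0,\lambda}(a)=B^n_{n,\lambda}(b)=1$ is correct.

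The genuine gap is in (iii), and you have in effect located it without closing it. Once (i) and (ii) are established, each $B^n_{k,\lambda}$ is determined up to a scalar $c_k$ by the one-dimensionality of the subspace with the prescribed zeros, and the \emph{entire quantitative content of the formula} --- the prefactor $f_\lambda(n+1)/f_{\lambda^{(0)}}(n)$, the binomial coefficient hidden in $B^n_k$, and the two Schur denominators $S_\lambda(a^{n+1-k},b^k)S_\lambda(a^{n-k},b^{k+1})$ --- lives precisely in the claim that all $c_k=1$, equivalently that $\sum_k B^n_{k,\lambda}\equiv 1$. Your proposal establishes this only for $k=0$ and $k=n$. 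For interior $k$ you offer two alternatives, but neither is carried out: matching the coefficient of $(t-a)^k$ at $t=a$ requires the $k$-th Taylor coefficient of the de Casteljau-generated $\tilde B^n_{k,\lambda}$, which in turn requires the explicit pseudo-affinity factor of Theorem~\ref{pseudotheorem} and an induction through the whole triangle; substituting into $\phi(t)=\sum_k B^n_{k,\lambda}(t)\Pi_k$ requires the blossom formula of Theorem~\ref{theoremblossom} and then reduces to a family of nontrivial Schur-function identities that you label ``Jacobi--Trudi-type'' but do not state or prove. Either route also makes the argument depend on Theorems~\ref{theoremblossom} or \ref{pseudotheorem}, which are themselves unproved imports from the same companion paper, so what you have is a reduction of one imported result to another rather than a proof. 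To close the gap you must actually compute: for instance, show directly that $1=\sum_k c_k \hat B^n_{k,\lambda}$ forces $c_k=1$ by evaluating successive derivatives at $a$, which amounts to an explicit Schur-function identity at the specializations $(a^{n-k},b^k)$ that your write-up never produces.
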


\section{Divided difference and Gelfond-Bernstein bases}
Let $f$ be a smooth real function defined on an interval $I$. 
For any real numbers $x_0 \leq x_1 \leq ... \leq x_n$ in the interval $I$,
the divided difference $[x_0,...,x_n]f$ of the function $f$ supported
at the point $x_i, i=0,...,n$ is recursively defined by $[x_0]f = f(x_0)$ 
and  
\begin{equation}\label{dvdefinition}
[x_0,x_1,...,x_n]f = \frac{[x_1,...,x_n]f - [x_0,x_1,...,x_{n-1}]f}{x_n - x_0}
\quad \textnormal{if} \quad n>0.
\end{equation}
If some of the $x_i$ coincide, then the divided difference $[x_0,...,x_n]f$
is defined as the limit of (\ref{dvdefinition}) when the distance of the $x_i$ 
becomes arbitrary small. A simple inductive argument shows that when the $x_i$ 
are pairwise distinct then we have 
{ \small {\begin{equation}\label{determinant}
[x_0,...,x_n]f = \sum^{n}_{i=0} \frac{f(x_i)}{\prod^{n}_{j=0, j \neq i}{(x_i - x_j)}}
= \frac{\left| \begin{array}{ccccc}
1 & x_{0} & \dots& x_{0}^{n-1} & f(x_0)  \\
1 & x_{1} & \dots& x_{1}^{n-1} & f(x_1)  \\
\dots & \dots & \dots & \dots            \\
1 & x_{n} & \dots& x_{n}^{n-1} & f(x_n) \\ 
\end{array} \right|} {V(x_0,x_1,...,x_n)},
\end{equation}}}
where $V(x_0,...,x_n)$ is the Vandermonde determinant. Note that by (\ref{determinant}) 
the divided difference $[x_0,x_1,...,x_n]f$ is symmetric in the arguments
$x_0,x_1,...x_n$.  
Consider, now, the function $f_{t}(x) = t^x$, where $t$ is viewed as 
a parameter. For a sequence $\Lambda = (0=r_0,r_1,...,r_n)$ of strictly 
increasing real numbers, the Gelfond-Bernstein basis of the \muntz space 
$\mathcal{E}_{\Lambda}(n)$ is defined as 
\begin{definition}
For a sequence $\Lambda=(0=r_0,r_1,...,r_n)$ of strictly
increasing positive real numbers, the Gelfond-Bernstein basis of the 
\muntz space $E_{\Lambda}(n)$ with respect to the interval $[0,1]$
is defined by
$$
H^{n}_{k,\Lambda}(t) = (-1)^{n-k} r_{k+1}...r_{n} [r_k,...,r_n]f_{t}
\quad \textnormal{for} \quad k=0,...,n-1
$$
and 
$$
H^{n}_{n,\Lambda}(t) = t^{r_n}.
$$
\end{definition} 
The determinant representation of the divided differences (\ref{determinant}),
shows that for $k=0,...,n-1$, the Gelfond-Bernstein basis can be expressed as 
\begin{equation}\label{lorentzdeterminant}
H^{n}_{k,\Lambda}(t) = \frac{r_{k+1}r_{k+2}...r_{n}}
{V(r_{k},r_{k+1},...,r_{n})} 
\left| \begin{array}{ccccc} 
t^{r_k} & 1 & r_{k} & \dots& r^{n-k-1}_{k}  \\
t^{r_{k+1}} & 1 & r_{k+1} & \dots& r^{n-k-1}_{k+1} \\
\dots & \dots & \dots & \dots \\
t^{r_{n}} & 1 & r_{n} & \dots& r^{n-k-1}_{n} \\ 
\end{array} \right|
\end{equation}
Formula (\ref{lorentzdeterminant}) reiterate the fact that every function 
$H^{n}_{k,\Lambda}$ is an element of the space $E_{\Lambda}(n)$.
Moreover, applying successive derivatives to the determinant 
formula (\ref{lorentzdeterminant}) shows that the function $H^{n}_{k,\Lambda}$
has a zero of order $n-k$ at $1$. Now let $a$ be a real number such that $0 < a <1$, 
and let $\lambda$ be the real partition associated with the sequence $\Lambda$  
and denote by $B^{n}_{k,\lambda}, k=0,...,n,$ the Chebyshev-Bernstein basis of the space 
$\mathcal{E}_{\lambda}(n) = E_{\Lambda}(n)$ over the interval $[a,1]$.
If we express the function $B^{n}_{0,\lambda}$ in the Gelfond-Bernstein basis
$H^{n}_{k,\Lambda}, k=0,...,n$ as 
$$
B^{n}_{0,\lambda}(t) = \sum^{n}_{k=0} a_{k} H^{n}_{k,\Lambda}(t),
$$
then, using the fact that $B^{n}_{0,\lambda}$ has a zero of order $n$ at $1$, 
shows that $a_1=a_2=...=a_n = 0$. Therefore, there exists a constant $a_0$ 
such that $B^{n}_{0,\lambda} = a_0 H^{n}_{k,\Lambda}$. Moreover, using
the fact that $B^{n}_{0,\lambda}(a) =1$, shows that the constant $a_0$ is 
given by  $a_0 = 1/H^{n}_{k,\Lambda}(a)$. Therefore, from the expression 
of the Chebyshev-Bernstein basis in Theorem \ref{bernsteintheorem}, we have       
$$
B^{n}_{0,\lambda}(t) = \frac{(1-t)^{n} S_{\lambda}(1,t^n) S_{\lambda^{(0)}}(a^n)}
{(1-a)^{n} S_{\lambda}(1,a^n) S_{\lambda^{(0)}}(t^n)} =  
\frac{H^{n}_{0,\Lambda}(t)}{H^{n}_{0,\Lambda}(a)}.
$$
The last equation shows in particular that there exists a constant $C$
such that   
\begin{equation}\label{constant}
\frac{(1-t)^{n} S_{\lambda}(1,t^n)} {S_{\lambda^{(0)}}(t^n)} = C H^{n}_{0,\Lambda}(t).
\end{equation}
From the determinant formulas (\ref{lorentzdeterminant}), we can readily show 
that $H^{n}_{0,\Lambda}(0) = 1$. Moreover, using the splitting formula 
(\ref{split}) for Schur functions, we obtain 
$$
\lim_{t\to0} \frac{S_{\lambda}(1,t^n)}{t^{|\lambda^{(0)}|}} = 
S_{\lambda_1}(1) S_{\lambda^{(0)}}(1^n)
\quad \textnormal{and} \quad 
\lim_{t\to0} \frac{S_{\lambda^{(0)}}(t^n)}{t^{|\lambda^{(0)}|}} = 
S_{\lambda^{(0)}}(1^n).
$$   
Thus, evaluating the left hand side factor of (\ref{constant}) 
at $t=0$ gives the value $1$. Therefore, the constant $C$ in (\ref{constant}) 
is equal to $1$. Summarizing,  
\begin{proposition}
Let $H^{n}_{k,\Lambda}, k=0,...,n$ be the Gelfond-Bernstein basis 
of the space $E_{\Lambda}(n) = \mathcal{E}_{\lambda}(n)$ with 
respect to the interval $[0,1]$. Then, we have
$$
H^{n}_{0,\Lambda}(t) = (-1)^{n} r_1 r_2 ... r_n [r_0,r_1,...,r_n]f_{t} = 
(1-t)^n \frac{S_{\lambda}(1,t^{n})}
{S_{\lambda^{(0)}}(t^{n})},  
$$
where $\lambda^{(0)}$ is the bottom partition of $\lambda$.
\end{proposition}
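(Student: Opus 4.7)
The natural strategy is to identify $H^n_{0,\Lambda}$ as a constant multiple of the Chebyshev--Bernstein basis function $B^n_{0,\lambda}$ on the interval $[a,1]$, read off the explicit ratio from Theorem~\ref{bernsteintheorem}, and then fix the overall scalar by evaluating at $t=0$ using the splitting formula (\ref{split}). Since the left-hand equality is just the definition of $H^n_{0,\Lambda}$, the content is the second equality.

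First, I would expand $B^n_{0,\lambda}(t) = \sum_{k=0}^{n} a_k H^n_{k,\Lambda}(t)$. The determinant formula (\ref{lorentzdeterminant}) shows that each $H^n_{k,\Lambda}$ vanishes to order exactly $n-k$ at $t=1$, whereas $B^n_{0,\lambda}$ vanishes to order $n$ at $t=1$. Matching derivatives at $t=1$ one after another, starting with the value itself and working through successively higher orders, forces $a_n = a_{n-1} = \cdots = a_1 = 0$, and the endpoint condition $B^n_{0,\lambda}(a) = 1$ fixes $a_0 = 1/H^n_{0,\Lambda}(a)$. Substituting the closed form of Theorem~\ref{bernsteintheorem} at $k=0$, $b=1$ and cancelling the common $a$-dependent factors (which must drop out, as the resulting ratio must be independent of $a$) yields
\[
\frac{(1-t)^n\, S_\lambda(1,t^n)}{S_{\lambda^{(0)}}(t^n)} \;=\; C\, H^n_{0,\Lambda}(t)
\]
for some constant $C$.

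To pin down $C$, I would evaluate at $t=0$. On the right, direct inspection of (\ref{lorentzdeterminant}) shows that the first column becomes all ones and the remaining block is a Vandermonde determinant in $r_1,\ldots,r_n$; cancellation against the $r_{k+1}\cdots r_n / V(r_0,\ldots,r_n)$ prefactor gives $H^n_{0,\Lambda}(0) = 1$. On the left, I would apply the splitting formula twice: once to $\eta=\lambda$ split as $\bigl((\lambda_1),\lambda^{(0)}\bigr)$, giving $S_\lambda(1,t^n) \sim t^{|\lambda^{(0)}|}\, S_{(\lambda_1)}(1)\, S_{\lambda^{(0)}}(1^n) = t^{|\lambda^{(0)}|}\, S_{\lambda^{(0)}}(1^n)$, and once to $\lambda^{(0)}$ alone, giving $S_{\lambda^{(0)}}(t^n) \sim t^{|\lambda^{(0)}|}\, S_{\lambda^{(0)}}(1^n)$. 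The two asymptotics cancel and, together with $(1-t)^n|_{t=0}=1$, show that the left-hand side also tends to $1$, so $C=1$.

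The main technical point is the bookkeeping of homogeneity degrees in the splitting step: the factor $t^{|\lambda^{(0)}|}$ has to appear with the same exponent in the numerator and denominator so that the ratio has a finite nonzero limit at $t=0$. Once that common factor is identified on both sides, the remaining verification is immediate, and the proposition follows.
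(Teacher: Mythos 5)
Your argument is essentially the paper's own proof: expand $B^{n}_{0,\lambda}$ in the Gelfond--Bernstein basis, use the order-$n$ zero at $t=1$ to isolate $H^{n}_{0,\Lambda}$ up to a scalar, substitute Theorem~\ref{bernsteintheorem} at $k=0$, $b=1$, and fix the remaining constant by evaluating at $t=0$ via the splitting formula together with $H^{n}_{0,\Lambda}(0)=1$ from (\ref{lorentzdeterminant}). One small slip in your description: at $t=0$ the first column of the determinant in (\ref{lorentzdeterminant}) becomes $(1,0,\dots,0)^{T}$ rather than all ones (only $r_0=0$), but expanding along that column still produces $V(r_1,\dots,r_n)$, which cancels against the prefactor to give $H^{n}_{0,\Lambda}(0)=1$ exactly as you conclude.
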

The last proposition also leads to the following interesting Schur 
representation of the divided difference of the function $f_t$ 
\begin{corollary}\label{dividedcorollary1}
Let $\Lambda = (0=r_{0},r_{1},...,r_{n})$ be a sequence of strictly
increasing real numbers and let $f_t$ be the function given by $f_t(x) = t^{x}$. 
Then, we have 
$$
[r_0,r_1,...,r_n]f_{t} = \frac{(-1)^{n}}{r_1 r_2 ... r_n}
(1-t)^n \frac{S_{\lambda}(1,t^{n})}{S_{\lambda^{(0)}}(t^{n})},
$$ 
where $\lambda$ is the real partition associated with the 
sequence $\Lambda$ and $\lambda^{(0)}$ the bottom partition of $\lambda$.
\end{corollary}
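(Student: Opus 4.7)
The plan is to observe that the corollary is essentially a direct algebraic rearrangement of the proposition immediately preceding it, so no new ingredients are required. The proposition records the chain of equalities
\[
(-1)^{n} r_1 r_2 \cdots r_n \, [r_0,r_1,\ldots,r_n] f_t \;=\; H^{n}_{0,\Lambda}(t) \;=\; (1-t)^n \, \frac{S_{\lambda}(1,t^{n})}{S_{\lambda^{(0)}}(t^{n})},
\]
where the leftmost equality is just the definition of $H^{n}_{0,\Lambda}$ and the rightmost equality is the content of the proposition.

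The plan is therefore simply to isolate the divided difference from this identity. Since $0 < r_1 < r_2 < \cdots < r_n$ are strictly positive, the product $r_1 r_2 \cdots r_n$ is nonzero, and I may divide both sides by $(-1)^{n} r_1 r_2 \cdots r_n$. Using $(-1)^{-n} = (-1)^{n}$, this yields
\[
[r_0,r_1,\ldots,r_n] f_t \;=\; \frac{(-1)^{n}}{r_1 r_2 \cdots r_n} (1-t)^n \, \frac{S_{\lambda}(1,t^{n})}{S_{\lambda^{(0)}}(t^{n})},
\]
which is exactly the claimed formula. There is no real obstacle here; the substantive work has already been carried out in the proposition (normalizing the constant $C$ in equation (\ref{constant}) using the splitting formula for Schur functions at $t = 0$), so the corollary deserves only a one-line derivation.
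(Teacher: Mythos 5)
Your proposal is correct and follows exactly the paper's route: the corollary is obtained by dividing the identity $(-1)^{n} r_1 \cdots r_n\,[r_0,\ldots,r_n]f_t = (1-t)^n S_{\lambda}(1,t^n)/S_{\lambda^{(0)}}(t^n)$ from the preceding proposition by the nonzero constant $(-1)^{n} r_1 \cdots r_n$. Nothing more is needed.
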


We will need the following simple lemma, in which its proof is left to the reader, 
as it can be readily proved using the determinant formulas of the divided
difference
\begin{lemma}\label{dividedlemma1}
For any real numbers $ m_0 < m_1 <... < m_s$, we have  
$$ 
[m_0,m_1,...,m_s]f_{t} = t^{m_0} 
[0,m_1-m_0,m_2-m_0,...,m_s-m_0]f_{t},
$$
where $f_t$ is the function defined by $f_t(x) = t^{x}$.
\end{lemma}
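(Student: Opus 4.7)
The plan is to exploit the closed-form expansion for the divided difference at pairwise distinct nodes recorded in (\ref{determinant}), namely
$$
[x_0,x_1,\ldots,x_n]f = \sum_{i=0}^n \frac{f(x_i)}{\prod_{j\neq i}(x_i-x_j)}.
$$
Since both sides of the claimed identity are continuous in $(m_0,\ldots,m_s)$ and the divided difference with repeated nodes is defined as the limit of the distinct-node case, it suffices to establish the identity when the $m_i$ are pairwise distinct, and then invoke the strict inequalities already assumed in the statement to apply the formula directly.

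With that reduction, I would first set $n_i := m_i - m_0$ for $i=0,\ldots,s$, so that $n_0=0$ and $n_0<n_1<\cdots<n_s$. Two elementary identities drive the whole argument: the translation invariance of the pairwise differences,
$$
n_i - n_j = (m_i-m_0)-(m_j-m_0) = m_i - m_j,
$$
and the functional equation $t^{n_i}\,t^{m_0}=t^{m_i}$ for the function $f_t$. Writing out the right-hand side using the sum formula for $[0,n_1,\ldots,n_s]f_t$ and multiplying by $t^{m_0}$ yields
$$
t^{m_0}[0,n_1,\ldots,n_s]f_t
= \sum_{i=0}^s \frac{t^{m_0}\,t^{n_i}}{\prod_{j\neq i}(n_i-n_j)}
= \sum_{i=0}^s \frac{t^{m_i}}{\prod_{j\neq i}(m_i-m_j)}
= [m_0,\ldots,m_s]f_t,
$$
as desired.

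If one prefers the Vandermonde-determinant formulation from (\ref{determinant}), the same phenomenon manifests in two places: $V(n_0,\ldots,n_s)=V(m_0,\ldots,m_s)$ by the translation invariance of differences, and one can pass from the matrix with entries $m_i^k$ to the matrix with entries $(m_i-m_0)^k$ by a sequence of elementary column operations (a unipotent, lower-triangular change of basis on the columns), which preserves the determinant; the prefactor $t^{m_0}$ is then absorbed into the last column. There is no real obstacle here --- the lemma is essentially the observation that the divided difference only sees differences of nodes together with values, and the multiplicative structure of $t^x$ converts a translation of nodes into a scalar multiplication of values.
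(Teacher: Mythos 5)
Your proof is correct, and it follows exactly the route the paper indicates (the proof is left to the reader there, with the hint that it ``can be readily proved using the determinant formulas of the divided difference,'' i.e.\ formula (\ref{determinant})). The reduction to pairwise distinct nodes is actually unnecessary since the hypothesis $m_0<m_1<\cdots<m_s$ already guarantees distinctness, but this does no harm.
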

Using Lemma \ref{dividedlemma1} and corollary \ref{dividedcorollary1},
we can give the following Schur function representation of the 
Gelfond-Bernstein basis
\begin{proposition}\label{gelfondexpression}
Let $\Lambda=(0=r_0,r_1,...,r_n)$ be a sequence of strictly
increasing real numbers and denote by $\lambda = (\lambda_1,\lambda_2,...,
\lambda_n)$ the associated real partition. The Gelfond-Bernstein basis
of the space $E_{\Lambda}(n) = \mathcal{E}_{\lambda}(n)$ with respect 
to the interval $[0,1]$ is given, for $k \leq n-1$, by
\begin{equation*}
H^{n}_{k,\Lambda}(t) = \frac{\prod_{i=k+1}^{n}{r_{i}}}
{\prod_{i=k+1}^{n}{(r_{i}-r_{k})}} 
t^{r_k} (1-t)^{n-k} 
\frac{S_{(\lambda_{k+1},...,\lambda_{n})}(1,t^{n-k})}
{S_{(\lambda_{k+2},...,\lambda_{n})}(t^{n-k})} 
\end{equation*}
and 
$$
H^{n}_{n,\Lambda}(t) = t^{r_n}.
$$
\end{proposition}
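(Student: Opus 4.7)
The plan is to reduce the formula to a direct application of Corollary 1 via the shifting lemma. For $k = n$ there is nothing to prove, since $H^n_{n,\Lambda}(t) = t^{r_n}$ by definition; so fix $k \le n-1$. Starting from
\[
H^n_{k,\Lambda}(t) = (-1)^{n-k}\, r_{k+1}\cdots r_n\, [r_k,r_{k+1},\ldots,r_n]f_t,
\]
the first step is to apply Lemma \ref{dividedlemma1} with $m_0=r_k,\,m_1=r_{k+1},\ldots,m_{n-k}=r_n$ to factor out $t^{r_k}$ and obtain
\[
[r_k,r_{k+1},\ldots,r_n]f_t = t^{r_k}\,[0,\,r_{k+1}-r_k,\,\ldots,\,r_n-r_k]f_t.
\]

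The second step is to apply Corollary \ref{dividedcorollary1} to the shifted sequence $\Lambda' = (0,\,r_{k+1}-r_k,\,\ldots,\,r_n-r_k)$, which has $n-k+1$ elements. For this I need to identify the real partition $\lambda'$ of length $n-k$ associated with $\Lambda'$ via Definition \ref{realpartition}. A direct computation gives
\[
\lambda'_j = (r_n-r_k) - (r_{k+j-1}-r_k) - (n-k-j+1) = r_n - r_{k+j-1} - (n-(k+j)+1) = \lambda_{k+j},
\]
so $\lambda' = (\lambda_{k+1},\ldots,\lambda_n)$ and its bottom partition is $\lambda'^{(0)} = (\lambda_{k+2},\ldots,\lambda_n)$. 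Corollary \ref{dividedcorollary1}, applied with $n-k$ in place of $n$, therefore yields
\[
[0,\,r_{k+1}-r_k,\,\ldots,\,r_n-r_k]f_t = \frac{(-1)^{n-k}}{\prod_{i=k+1}^n (r_i-r_k)}\,(1-t)^{n-k}\,\frac{S_{(\lambda_{k+1},\ldots,\lambda_n)}(1,t^{n-k})}{S_{(\lambda_{k+2},\ldots,\lambda_n)}(t^{n-k})}.
\]

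The final step is to substitute back into the expression for $H^n_{k,\Lambda}(t)$. The two signs $(-1)^{n-k}$ cancel, the factor $r_{k+1}\cdots r_n$ combines with the denominator $\prod_{i=k+1}^n(r_i-r_k)$ to produce the rational prefactor in the statement, and the $t^{r_k}(1-t)^{n-k}$ together with the Schur ratio matches the claimed formula verbatim.

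The only nontrivial point is the identification of $\lambda'$ with $(\lambda_{k+1},\ldots,\lambda_n)$; once this is observed, Corollary \ref{dividedcorollary1} does all the work and the result follows by bookkeeping of signs and constants. This step is really the content of the proposition: the ``divided difference on the truncated sequence'' corresponds precisely to the ``truncated partition'', which is what makes the Gelfond-Bernstein basis have a clean $B^n_k$-type structure.
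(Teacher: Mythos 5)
Your proof is correct and follows essentially the same route as the paper: apply Lemma \ref{dividedlemma1} to factor out $t^{r_k}$, then apply Corollary \ref{dividedcorollary1} to the shifted sequence $(0, r_{k+1}-r_k,\ldots,r_n-r_k)$, whose associated real partition is identified as $(\lambda_{k+1},\ldots,\lambda_n)$. Your explicit index computation verifying $\lambda'_j=\lambda_{k+j}$ is in fact slightly more detailed than the paper's one-line justification of that same identification.
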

\begin{proof}
According to Lemma \ref{dividedlemma1}, for $k \leq n-1$ we have 
\begin{equation*}
\begin{split}
H^{n}_{k,\Lambda}(t) = & (-1)^{n-k} r_{k+1}r_{k+2}...r_{n} 
[r_k,...,r_n]f_{t} \\
= & (-1)^{n-k} r_{k+1}r_{k+2}...r_{n} t^{r_k} [0,r_{k+1}-
r_{k},...,r_{n}-r_{k}]f_{t}. \\
\end{split}
\end{equation*}
Now, from corollary \ref{dividedcorollary1}, we have 
$$
H^{n}_{k,\Lambda}(t) = \frac{r_{k+1}r_{k+2}...r_{n}}
{(r_{k+1}-r_{k})(r_{k+2}-r_{k})...(r_{n}-r_{k})} 
t^{r_{k}} (1-t)^{n-k} \frac{S_{\eta}(1,t^{n-k})}
{S_{\eta^{(0)}}(t^{n-k})},
$$
where $\eta$ is the real partition associated with the sequence 
$(0,r_{k+1}-r_{k},...,r_{n}-r_{k})$.
From (\ref{realpartition}), we have $r_k = \lambda_1 - \lambda_{k+1} + k$. Therefore, 
the partition $\eta$ is given by $\eta = (\lambda_{k+1},...,\lambda_{n})$. This leads 
to the formula of the proposition.
\end{proof}

Now, we are in a position to show the relation between the 
Chebyshev-Bernstein bases and the Gelfond-Bernstein bases
in \muntz spaces 
\begin{theorem}\label{maintheorem}
Let $\Lambda=(0=r_0,r_1,...,r_n)$ be a sequence of strictly
increasing real numbers and $\lambda$ its associated real 
partition. Let $B^{n}_{k,\lambda},$ $k=0,...,n$ be the Chebyshev-Bernstein
basis associated with the partition $\lambda$ over an interval $[a,1]$ 
and let $H^{n}_{k,\Lambda}, k=0,...,n$ be the Gelfond-Bernstein basis 
associated with the same partition $\lambda$ over the interval $[0,1]$. 
Then, for any $k=0,...,n$, we have 
$$
\lim_{a\to 0} B^{n}_{k,\lambda}(t) = H^{n}_{k,\Lambda}(t).
$$
\end{theorem}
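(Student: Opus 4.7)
The plan is to use the explicit formulas available for both sides. For the Chebyshev-Bernstein basis we have Theorem \ref{bernsteintheorem} with $b=1$, namely
$$B^{n}_{k,\lambda}(t) = \frac{f_{\lambda}(n+1)}{f_{\lambda^{(0)}}(n)}\, B^{n}_{k}(t)\,\frac{S_{\lambda^{(0)}}(a^{n-k},1^{k})\, t^{\lambda_1}\, S_{\lambda}(a^{n-k},1^{k},a/t)}{S_{\lambda}(a^{n+1-k},1^{k})\, S_{\lambda}(a^{n-k}, 1^{k+1})},$$
and for the Gelfond-Bernstein basis we have the formula of Proposition \ref{gelfondexpression}. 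The classical Bernstein factor $B^{n}_{k}$ on $[a,1]$ converges trivially to $\binom{n}{k}t^{k}(1-t)^{n-k}$, so the real task is to evaluate the limit of the Schur function rational expression as $a \to 0$.

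For each of the four $a$-dependent Schur functions I would apply the splitting formula (\ref{split}). The $k$ (or $k+1$) arguments equal to $1$ serve as the non-vanishing $z_{i}$, while the remaining arguments are all of order $a$, noting that $a/t = a\cdot(1/t)$ is a vanishing variable with coefficient $1/t$. The partition splits along the corresponding row into a top block that indexes the Schur function in the non-vanishing variables and a bottom block whose weight is the exponent of the $a$ factor produced. A direct accounting shows that the total exponents of $a$ coming from the numerator and from the denominator are both $(\lambda_{k+1}+\cdots+\lambda_{n})+(\lambda_{k+2}+\cdots+\lambda_{n})$, so they cancel exactly and the limit is finite and nonzero.

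After this cancellation, the Schur factors $S_{(\lambda_{1},\ldots,\lambda_{k})}(1^{k})$ and $S_{(\lambda_{k+2},\ldots,\lambda_{n})}(1^{n-k})$ appear symmetrically in numerator and denominator and drop out, and the only surviving Schur function involving $t$ is $S_{(\lambda_{k+1},\ldots,\lambda_{n})}(1^{n-k},1/t)$. By homogeneity of the Schur functions this equals $t^{-(\lambda_{k+1}+\cdots+\lambda_{n})} S_{(\lambda_{k+1},\ldots,\lambda_{n})}(1,t^{n-k})$. Combined with $t^{\lambda_{1}}$ and with the $t^{k}$ from the classical Bernstein prefactor, the net exponent of $t$ becomes $k+\lambda_{1}-\lambda_{k+1}-(\lambda_{k+2}+\cdots+\lambda_{n})$, which reproduces the $t^{r_{k}}$ factor of Proposition \ref{gelfondexpression} after using $\lambda_{1}-\lambda_{k+1}=r_{k}-k$ (immediate from Definition \ref{realpartition}) and extracting, via homogeneity, the factor $t^{\lambda_{k+2}+\cdots+\lambda_{n}}$ hidden inside the Gelfond denominator $S_{(\lambda_{k+2},\ldots,\lambda_{n})}(t^{n-k})$.

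It remains to identify the constants. By the general hook-length formula (\ref{generalhook}), each $f_{\nu}(m)$ arising above collapses to a Vandermonde product in a contiguous block of the $r_{i}$'s; in particular one readily obtains $f_{\lambda}(n+1)/f_{\lambda^{(0)}}(n) = (\prod_{j=1}^{n} r_{j})/n!$, with analogous clean expressions for the subpartition factors $f_{(\lambda_2,\ldots,\lambda_{k+1})}(k)$, $f_{(\lambda_{k+1},\ldots,\lambda_n)}(n+1-k)$, $f_{(\lambda_1,\ldots,\lambda_{k+1})}(k+1)$, and $f_{(\lambda_{k+2},\ldots,\lambda_n)}(n-k)$. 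A telescoping of these Vandermondes, combined with $\binom{n}{k}$, converts the resulting product into the Gelfond factor $\prod_{i=k+1}^{n} r_{i}/\prod_{i=k+1}^{n}(r_{i}-r_{k})$. The principal obstacle is precisely this bookkeeping: verifying the $a$-power cancellations and the Vandermonde telescoping. The boundary case $k=n$ is immediate, since all $a$-powers vanish, every Schur function reduces to some $f_{\nu}(m)$, and one recovers $t^{n+\lambda_{1}}=t^{r_{n}}=H^{n}_{n,\Lambda}(t)$ directly.
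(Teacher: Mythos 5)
Your proposal is correct and follows essentially the same route as the paper's own proof: apply the splitting formula (\ref{split}) to the four $a$-dependent Schur functions in Theorem \ref{bernsteintheorem}, check that the powers of $a$ cancel, use homogeneity to rewrite $S_{(\lambda_{k+1},\dots,\lambda_n)}(1^{n-k},1/t)$, and identify the resulting constant with $\prod_{i=k+1}^{n}r_i/\prod_{i=k+1}^{n}(r_i-r_k)$ via the hook-length formula (\ref{generalhook}), matching Proposition \ref{gelfondexpression}. The only cosmetic difference is that you settle the boundary index $k=n$ by direct evaluation, whereas the paper disposes of the remaining basis function by invoking the normalization of both bases.
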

\begin{proof}
Let us fix $k$ such that $k \leq n-1$. From the splitting formula (\ref{split})
of Schur functions, we have
$$
\lim_{a\to 0} \frac{S_{\lambda^{(0)}}(1^{k},a^{n-k})}
{a^{\lambda_{k+2} + \lambda_{k+3}+ ...+\lambda_{n}}} = 
S_{(\lambda_2,...,\lambda_{k+1})}(1^{k}) S_{(\lambda_{k+2},...,\lambda_{n})}(1^{n-k}),
$$

$$
\lim_{a\to 0} \frac{S_{\lambda}(1^{k},a^{n-k},a/t)}
{a^{\lambda_{k+1} + \lambda_{k+2}+...+ \lambda_{n}}} = 
S_{(\lambda_1,...,\lambda_{k})}(1^{k}) S_{(\lambda_{k+1},...,\lambda_{n})}(1^{n-k},1/t),
$$

$$
\lim_{a\to 0} \frac{S_{\lambda}(1^{k},a^{n+1-k})}
{a^{\lambda_{k+1} + \lambda_{k+2}+...+ \lambda_{n}}} = 
S_{(\lambda_1,...,\lambda_{k})}(1^{k}) S_{(\lambda_{k+1},...,\lambda_{n})}(1^{n+1-k}),
$$
and 
$$
\lim_{a\to 0} \frac{S_{\lambda}(1^{k+1},a^{n-k})}
{a^{\lambda_{k+2} + \lambda_{k+3}+...+ \lambda_{n}}} = 
S_{(\lambda_1,...,\lambda_{k+1})}(1^{k+1}) S_{(\lambda_{k+2},...,\lambda_{n})}(1^{n-k}).
$$
Therefore, from the explicit expression of the Chebyshev-Bernstein basis in Theorem 
\ref{bernsteintheorem}, we have 
{\small {\begin{equation*}
\lim_{a\to 0} B^{n}_{k,\lambda}(t) = \binom{n}{k} t^{k+\lambda_1} (1-t)^{n-k}
\frac{f_{\lambda}(n+1)}{f_{\lambda^{(0)}}(n)}
\frac{ f_{\mu^{(0)}}(k)}{f_{\mu}(k+1)f_{\eta}(n+1-k)}
S_{\eta}(1^{n-k},1/t),
\end{equation*}}}
where $\mu = (\lambda_1,...,\lambda_{k+1})$ and 
$\eta = (\lambda_{k+1},...,\lambda_{n})$.
By the homogeneity property of Schur functions, we have 
$$
S_{\eta}(1^{n-k},1/t) = \frac{S_{\eta}(t^{n-k},1) f_{\eta^{(0)}}(n-k)}
{S_{\eta^{(0)}}(t^{n-k}) t^{\lambda_{k+1}}}.
$$
Therefore,
\begin{equation}\label{bernstein1}
\lim_{a\to 0} B^{n}_{k,\lambda}(t) = C(\lambda) 
t^{\lambda_1-\lambda_{k+1}+k}(1-t)^{n-k}
\frac{S_{\eta}(t^{n-k},1)}{S_{\eta^{(0)}}(t^{n-k})},
\end{equation}
where $C(\lambda)$ is given by
$$
C(\lambda) = \binom{n}{k} \frac{f_{\lambda}(n+1)}{f_{\lambda^{(0)}}(n)} 
\frac{ f_{\mu^{(0)}}(k) f_{\eta^{(0)}}(n-k)}{f_{\mu}(k+1) f_{\eta}(n+1-k)}.
$$
Using the hook-length formula (\ref{generalhook}),
it can then be proved that the constant $C(\lambda)$ is given by 
\begin{equation*}
C(\lambda) = \frac{r_{k+1}r_{k+2}...r_{n}}
{(r_{k+1}-r_{k})(r_{k+2}-r_{k})...(r_{n}-r_{k})}. 
\end{equation*}
Inserting the last equation into (\ref{bernstein1}) and taking into 
consideration the claim of Proposition \ref{gelfondexpression}
conclude that for $k= 0,1,...,n-1$, we have 
$$
\lim_{a\to 0} B^{n}_{k,\lambda}(t) = H^{n}_{k,\Lambda}(t).
$$
Moreover, as the Chebyshev-Bernstein and the Gelfond-Bernstein bases
are both normalized, we also get 
$$
\lim_{a\to 0} B^{n}_{0,\lambda}(t) = H^{n}_{0,\Lambda}(t).
$$
\end{proof}

\section{Gelfond-B\'ezier Curves} 
Theorem \ref{maintheorem} implies in particular that several properties of the
Chebyshev-Bernstein bases can be transfered to the Gelfond-Bernstein bases
through a limiting process. In particular, we conclude that the Gelfond-Bernstein 
basis is a non-negative normalized basis. Moreover, the fundamental variation
diminishing property, namely that for any $a \leq t_0 < t_1<...<t_n \leq 1$, 
the matrix $(B^{n}_{k,\lambda}(t_j))_{0 \leq j,k \leq n}$ 
is totally positive is also transfered to the Gelfond-Bernstein
bases through the limiting process. Summarizing,  
\begin{corollary}\label{total}
Let $\Lambda = (0=r_{0},r_{1},...,r_{n})$ be a sequence of strictly
increasing real numbers, and let $H^{n}_{k,\Lambda}, k=0,...,n$ be the 
Gelfond-Bernstein basis associated with the \muntz space $E_{\Lambda}(n)$ over 
the interval $[0,1]$, then 
\begin{equation}\label{normalize}
\sum_{k=0}^{n} H^{n}_{k,\Lambda}(t) = 1,
\quad \textnormal{and}\quad
0 \leq H^{n}_{k,\lambda}(t) \leq 1
\quad \textnormal{for all}\quad  t\in [0,1].
\end{equation}
Moreover, for any $0 \leq t_0 < t_1 < ...<t_n \leq 1$, 
the matrix  $(H^{n}_{k,\Lambda}(t_j))_{0 \leq j,k \leq n}$ 
is totally positive.   
\end{corollary}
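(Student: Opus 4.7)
The plan is to transfer each assertion from the Chebyshev-Bernstein setting on $[a,1]$, where the analogous properties are already known (normalization from the defining identity $\phi(t) = \sum_k B^{n}_{k,\lambda}(t)\Pi_k$, non-negativity from standard Chebyshev blossom theory, and total positivity from the fact that the underlying \muntz space is an extended Chebyshev space on $(0,\infty)$), to the Gelfond-Bernstein setting by letting $a \to 0$ and invoking Theorem \ref{maintheorem}.

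For normalization, I would fix $t \in (0,1]$ and pick $0 < a < t$. Then $\sum_{k=0}^n B^{n}_{k,\lambda}(t) = 1$, and each summand converges to $H^{n}_{k,\Lambda}(t)$ as $a \to 0$ by Theorem \ref{maintheorem}, so $\sum_k H^{n}_{k,\Lambda}(t) = 1$. The value at $t=0$ is handled by continuity: the closed-form Schur expressions in Proposition \ref{gelfondexpression} make each $H^{n}_{k,\Lambda}$ continuous on $[0,1]$, so the identity extends. Non-negativity follows by the same limit argument applied to the inequalities $B^{n}_{k,\lambda}(t) \geq 0$, and the upper bound $H^{n}_{k,\Lambda}(t) \leq 1$ is then immediate from normalization together with non-negativity of the remaining terms in the sum.

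For total positivity, given $0 < t_0 < t_1 < \dots < t_n \leq 1$, choose $a$ with $0 < a < t_0$ so that all $t_j$ lie in $[a,1]$. The Chebyshev-Bernstein collocation matrix $M_a = (B^{n}_{k,\lambda}(t_j))$ is totally positive in this setting, hence every one of its minors is non-negative. Each such minor is a polynomial in the entries of $M_a$, so by Theorem \ref{maintheorem} it converges to the corresponding minor of $M_0 = (H^{n}_{k,\Lambda}(t_j))$ as $a \to 0$; being a limit of non-negative quantities, it remains non-negative. To reach the boundary case $t_0 = 0$, I would pick $0 < \epsilon < t_1$, apply the preceding argument to $\epsilon, t_1, \dots, t_n$, and let $\epsilon \to 0$, exploiting once more the continuity of the $H^{n}_{k,\Lambda}$ at $0$. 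The main obstacle is not any single calculation but the bookkeeping of this double-limit argument and, more substantively, locating a reference that cleanly establishes total positivity of Chebyshev-Bernstein bases on $[a,1]$ in extended Chebyshev spaces; once that input is in hand, the limiting procedure is entirely routine.
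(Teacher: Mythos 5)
Your proposal is correct and follows essentially the same route as the paper: the corollary is obtained there exactly by transferring normalization, non-negativity, and total positivity of the Chebyshev--Bernstein basis on $[a,1]$ to the Gelfond--Bernstein basis via the limit $a\to 0$ of Theorem \ref{maintheorem}, with minors passing to the limit because they depend continuously on the matrix entries. Your treatment is in fact slightly more careful than the paper's, which leaves the boundary cases $t=0$ and $t_0=0$ implicit.
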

In the case each $r_i$ in the sequence $\Lambda = (0=r_0,r_1,...,r_n)$
is a positive integer (a case in which the associated real partition 
is an integer partition), we can give the following characterization of the 
Gelfond-Bernstein basis 
\begin{theorem}
Let $\Lambda = (0=r_0,r_1,...,r_n)$ be a strictly increasing 
sequence of integers. Then the Gelfond-Bernstein basis
$H^{n}_{k,\Lambda}, k=0,...,n$ associated with the \muntz space
$E_{\Lambda}(n)$ is the unique normalized basis of $E_{\Lambda}(n)$
such that for $k=0,...,n$, $H^{n}_{k,\Lambda}$ vanish $r_k$ times 
at $0$ and $n-k$ times at $1$.
\end{theorem}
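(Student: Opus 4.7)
The plan is to verify that the Gelfond--Bernstein functions $H^{n}_{k,\Lambda}$ satisfy the claimed vanishing orders, then show these boundary conditions single out a one-dimensional subspace of $E_{\Lambda}(n)$ for each $k$, and finally pin down the scalar freedom using the shared normalization.

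First I would establish the vanishing orders. For $k=n$ the claim is trivial since $H^{n}_{n,\Lambda}(t) = t^{r_n}$. For $k \leq n-1$, the determinant formula (\ref{lorentzdeterminant}) writes $H^{n}_{k,\Lambda}(t)$ as a linear combination of $t^{r_k},t^{r_{k+1}},\ldots,t^{r_n}$, so (since the $r_i$ are integers) the function is divisible by $t^{r_k}$; cofactor expansion along the first column produces the nonzero coefficient $r_{k+1}\cdots r_n/\prod_{i>k}(r_i-r_k)$ on $t^{r_k}$, giving zero of exact order $r_k$ at $0$. For the vanishing at $1$, I would use the representation $H^{n}_{k,\Lambda}(t) = (-1)^{n-k}r_{k+1}\cdots r_n [r_k,\ldots,r_n]f_t$, differentiate under the divided difference, and observe that $\frac{d^{j}}{dt^{j}}f_{t}(x)\big|_{t=1} = x(x-1)\cdots(x-j+1)$ is a polynomial in $x$ of degree $j$. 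Since $[r_k,\ldots,r_n]$ annihilates every polynomial of degree less than $n-k$, the first $n-k$ derivatives of $H^{n}_{k,\Lambda}$ at $1$ vanish, as required.

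Next I would prove uniqueness at the level of subspaces. Let $\tilde H \in E_{\Lambda}(n)$ be any function that vanishes $r_k$ times at $0$ and $n-k$ times at $1$, and write $\tilde H(t) = \sum_{i=0}^{n} a_i t^{r_i}$. A direct computation gives $\tilde H^{(j)}(0) = r_i!\, a_i$ when $j = r_i$ for some $i$, and zero otherwise, so vanishing $r_k$ times at $0$ is equivalent to $a_0 = a_1 = \cdots = a_{k-1} = 0$, i.e., $k$ independent linear constraints carving out the subspace $V_k = \mathrm{span}(t^{r_k},\ldots,t^{r_n})$ of dimension $n+1-k$. On $V_k$ the $n-k$ further constraints $\tilde H^{(j)}(1)=0$ for $j=0,\ldots,n-k-1$ are given by the matrix $\left(r_i^{\underline{j}}\right)_{k \leq i \leq n,\,0 \leq j \leq n-k-1}$, whose $(n-k)\times(n-k)$ minors reduce (by a triangular change of basis from falling factorials to monomials) to nonzero Vandermonde determinants in the distinct $r_i$'s; so these functionals are linearly independent and the joint kernel has dimension exactly $(n+1-k)-(n-k)=1$. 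Since $H^{n}_{k,\Lambda}$ is a nonzero element of this kernel, every admissible $\tilde H_k$ satisfies $\tilde H_k = c_k H^{n}_{k,\Lambda}$ for some scalar $c_k$.

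Finally, if $(\tilde H_0,\ldots,\tilde H_n)$ is a normalized basis with the stated zeros, summing the identities $\tilde H_k = c_k H^{n}_{k,\Lambda}$ and using both $\sum_k \tilde H_k = 1$ and Corollary \ref{total} gives $\sum_{k} (c_k-1) H^{n}_{k,\Lambda} = 0$, whence $c_k = 1$ for every $k$ by the linear independence of the Gelfond--Bernstein basis. The main obstacle I expect is the independence check at $t=1$: one has to recognize that after the zero-order conditions have been imposed, the remaining tangency data is encoded in a rectangular Vandermonde-type matrix and use the distinctness of the exponents $r_k,\ldots,r_n$ to extract a nonvanishing maximal minor; the rest of the argument is essentially bookkeeping on dimensions.
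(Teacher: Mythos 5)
Your proof is correct, but it takes a genuinely different route from the paper's. For the vanishing orders, the paper works from the Schur-function expression of Proposition \ref{gelfondexpression}: it isolates the factor $\psi_k(t)=S_{(\lambda_{k+1},\dots,\lambda_n)}(1,t^{n-k})/S_{(\lambda_{k+2},\dots,\lambda_n)}(t^{n-k})$ and uses the branching rule to show $\psi_k$ is a polynomial with positive coefficients and constant term $1$, hence nonvanishing on all of $[0,1]$; you instead read the exact order at $0$ off the determinant formula (\ref{lorentzdeterminant}) and obtain the vanishing at $1$ from the fact that $[r_k,\dots,r_n]$ annihilates polynomials of degree less than $n-k$, applied to $\tfrac{d^j}{dt^j}t^x\big|_{t=1}=x(x-1)\cdots(x-j+1)$. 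Your divided-difference argument is more elementary (no Schur functions needed), though as written it only gives order \emph{at least} $n-k$ at $1$; exactness follows either from your own rank computation (a nonzero element of $\mathrm{span}(t^{r_k},\dots,t^{r_n})$ cannot vanish $n-k+1$ times at $1$, since the corresponding square falling-factorial Vandermonde is invertible) or by noting that $[r_k,\dots,r_n]$ applied to the monic degree-$(n-k)$ polynomial $x(x-1)\cdots(x-(n-k)+1)$ equals its leading coefficient $1$ — this deserves one explicit sentence. For uniqueness the paper runs an induction on $k$, expanding the competitor in the basis $H^n_{j,\Lambda}$ and killing coefficients by successive derivative evaluations at $1$ and $0$; you replace this with a clean dimension count, which is where the real content lies (the full row rank of the rectangular falling-factorial Vandermonde, reduced to the monomial Vandermonde by a unitriangular change of basis), and then fix all scalars simultaneously from $\sum_k(c_k-1)H^n_{k,\Lambda}=0$ and linear independence of the $H^n_{k,\Lambda}$ (which itself follows from their distinct leading exponents at $0$). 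Your closing step is in fact safer than the paper's: the identity $\sum_{j=0}^{k}(H^n_{j,\Lambda})^{(n-k)}(1)=0$ invoked there is not what differentiating the normalization at $t=1$ yields, since the terms with $j>k$ need not vanish at that order; the intended computation is the analogous one at $t=0$ with derivative order $r_k$, and your linear-independence argument sidesteps the issue entirely. What the paper's route buys in exchange is the positivity of $\psi_k$ on the whole interval, which is more information than the theorem requires.
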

\begin{proof}
From (\ref{normalize}), we know that the Gelfond-Bernstein basis $H^{n}_{k,\Lambda}$
$k=0,...,n$ is normalized. Let $\lambda = (\lambda_1,...,\lambda_n)$ be
the integer partition associated with the sequence $\Lambda$. Then, from Proposition 
\ref{gelfondexpression}, to show that for each $k =0,..,n$ the function $
H^{n}_{k,\Lambda}$ vanish exactly $r_k$ times at $0$ and $n-k$ times at $1$,
we should, therefore, prove that for each $k \leq n-1$, the function $\psi_{k}(t)$
\begin{equation*}
\psi_{k}(t) = \frac{S_{(\lambda_{k+1},...,\lambda_{n})}(1,t^{n-k})}
{S_{(\lambda_{k+2},...,\lambda_{n})}(t^{n-k})} 
\end{equation*}
does not vanishes at $0$ and $1$. Denote by $\mu$ the partition 
$\mu = (\lambda_{k+1},...,\lambda_{n})$, then by the branching rule 
(\ref{branchingrule1}), we have 
\begin{equation*}
\psi_{k}(t) = \frac{\sum_{\eta \prec \mu} 
S_{\eta}(t^{n-k})}{S_{\mu^{(0)}}(t^{n-k})},
\end{equation*} 
where the sum is over all the interlacing partitions $\eta$ 
i.e., partitions $\eta = (\eta_1,\eta_2,...,$ $\eta_{n-k})$ such that
\begin{equation}\label{localeta}
\mu_1 \geq \eta_1 \geq \mu_2 \geq ...\eta_{n-k-1} \geq \mu_{n-k} \geq \eta_{n-k} \geq 0.
\end{equation}
Noticing that $\mu^{(0)}$ satisfies the condition (\ref{localeta}), and that
for any partition $\eta$ that satisfies (\ref{localeta}) and different from 
$\eta^{(0)}$, we have $|\eta|-|\mu^{(0)}| > 0$, we obtain 
\begin{equation*}
\psi_{k}(t) = 1 + \sum_{\eta \prec \mu; \eta \neq \mu^{(0)}}
\frac{f_{\eta}(n-k)}{f_{\mu^{(0)}}(n-k)} t^{|\eta|-|\mu^{(0)}|}. 
\end{equation*} 
Therefore, $\psi_{k}$ is a polynomial in $t$ with positive coefficients, 
and thus have no roots in the interval $[0,1]$. 
Now, to prove the uniqueness, we assume that there exist another basis 
$G^{n}_{k,\Lambda}, k=0,...,n$ of the \muntz space $E_{\Lambda}(n)$, 
that satisfies the normalization and the vanishing properties at $0$ and $1$
as mentioned in the Theorem. We will prove by induction on $k$ that 
$G^{n}_{k,\Lambda} = H^{n}_{k,\Lambda}$. For $k=0$, we can write 
\begin{equation*}
G^{n}_{0,\Lambda}(t) = \sum_{j=0}^{n} a_j H^{n}_{j,\Lambda}(t). 
\end{equation*}
Using the fact that $H^{n}_{j,\Lambda}$, for $j=0,...,n$ has a
root of order $n-j$ at $1$ and a successive evaluation of 
the $j$th derivative of $G^{n}_{0,\Lambda}$ at $1$, 
for $j=0,...,n-1$, shows that $a_1 = a_2 = ...=a_n = 0$.
Therefore, we have $G^{n}_{0,\Lambda} = a_0 H^{n}_{0,\Lambda}$. 
From the normalization and the vanishing properties for both 
$H^{n}_{k,\Lambda}$ and $G^{n}_{k,\Lambda}$, we have 
$H^{n}_{0,\Lambda}(1) = G^{n}_{0,\Lambda}(1) = 1$. Therefore, $a_0 =1$ 
and then $G^{n}_{0,\Lambda} = H^{n}_{0,\Lambda}$. Let us assume that 
$G^{n}_{j,\Lambda} = H^{n}_{j,\Lambda}$ for $j=0,...,k-1$ and then prove 
that $G^{n}_{k,\Lambda} = H^{n}_{k,\Lambda}$. We write 
\begin{equation}\label{successive}
G^{n}_{k,\Lambda}(t) = \sum_{j=0}^{n} a_j H^{n}_{j,\Lambda}(t). 
\end{equation}
Evaluating successively ${G^{n}_{k,\Lambda}}^{(h)}(1)$, $h =0,...,n-k-1$,
in the expression (\ref{successive}), shows that $a_n = a_{n-1} =...=a_{k+1} = 0$.
Similarly, evaluating ${G^{n}_{k,\Lambda}}^{(r_h)}(0)$, 
$h=0,...,k-1$ in the expression (\ref{successive}), shows that $a_0 =a_1=...=a_{k-1} = 0$.
Therefore, we have $G^{n}_{k,\Lambda} = a_k H^{n}_{k,\Lambda}$. Now, from 
the normalization condition, we have 
\begin{equation*}
\sum_{j=0}^{k}  {H^{n}_{j,\Lambda}}^{(n-k)}(1) = 0
\quad and \quad 
\sum_{j=0}^{k}  {G^{n}_{j,\Lambda}}^{(n-k)}(1) = 0.
\end{equation*}           
The induction hypothesis then shows that 
${G^{n}_{k,\Lambda}}^{(n-k)}(1)  = {H^{n}_{k,\Lambda}}^{(n-k)}(1)$,
thus $a_k = 1$ 
\end{proof} 

We can define the Gelfond-B\'ezier curve using the Gelfond-Bernstein 
basis in the same way we define the B\'ezier curve using the Bernstein 
basis, namely, 

\begin{definition}
Let $\Lambda = (0=r_{0},r_{1},...,r_{n})$ be a sequence of strictly
increasing real numbers, and let $H^{n}_{k,\Lambda}, k=0,...,n$ be the 
Gelfond-Bernstein basis associated with the \muntz 
space $E_{\Lambda}(n)$ over the interval $[0,1]$.
The parametric curve defined over $[0,1]$ by  
\begin{equation*}
P(t) = \sum_{k=0}^{n} H^{n}_{k,\Lambda}(t) P_i,
\end{equation*}
where $P_i$ are points in $\mathbb{R}^s$, $s\geq 1$, is called
a Gelfond-B\'ezier curve with control point $P_i$. The polygon 
$(P_0,P_1,...,P_n)$ is called the control polygon of the 
Gelfond-B\'ezier curve.
\end{definition}
From the definition of the Gelfond-Bernstein basis, the Gelfond-B\'ezier
curve $P$ satisfies the end conditions
\begin{equation*}
P(0) = P_0 
\quad \textnormal{and} \quad
P(1) = P_n.
\end{equation*}
Moreover, from the total positivity of the Gelfond-Bernstein basis
stated in corollary \ref{total}, the Gelfond-B\'ezier curve satisfies
the so-called variation diminishing property, namely, the number of intersection
of a hyperplane with the curve does not exceed the number of intersection
of the hyperplane with the control polygon. In the following, we will prove 
that if the sequence $\Lambda = (0=r_0,r_1,...,r_n)$ is such that $r_1$ 
is a positive integer, then the Gelfond-B\'ezier curve possesses the tangency 
property at the end points. Figure \ref{fig:figure1} shows different 
Gelfond-B\'ezier curves associated with the control points $(P_0,P_1,P_2,P_3)$
and various \muntz spaces.
\begin{figure}
\hskip 2 cm
\includegraphics[height=5.5cm]{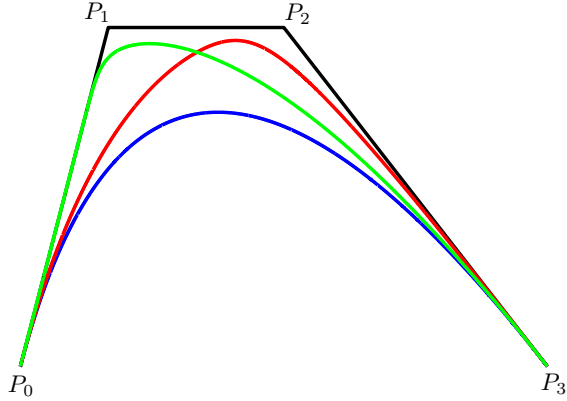}
\caption{Gelfond-B\'ezier curves associated with the control polygon 
$(P_0,P_1,P_2,P_3)$ and \muntz spaces : blue curve $span(1,t,t^2,t^3)$, 
red curve $span(1,t,t^2,t^{20})$, green curve $span(1,t^2,t^{50},t^{100})$.}
\label{fig:figure1}
\end{figure}

\vskip 0.2cm    
\noindent {\bf {The derivative of the Gelfond-Bernstein Basis: }}
We start with the following lemma giving  the derivative of the 
divided differences 
\begin{lemma}
Le $x_0,x_1,...,x_n$ be pairwise distinct real numbers  
and consider the function $\psi(t) = [x_0,x_1,...,x_n]f_t$, 
where $f_t$ is the function defined as $f_t(x) = t^{x}$.
Then we have 
\begin{equation}\label{dividedderivative1}
\psi'(t) = x_0  [x_0-1,x_1-1,...,x_n-1]f_t +  
[x_1-1,x_2-1,...,x_n-1]f_t.
\end{equation} 
\end{lemma}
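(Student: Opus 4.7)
The plan is to work from the explicit symmetric-sum representation of the divided difference given in equation (\ref{determinant}) and then differentiate term by term. Since the $x_i$ are pairwise distinct by hypothesis, we may write
$$
\psi(t) \;=\; [x_0,x_1,\ldots,x_n]f_{t} \;=\; \sum_{i=0}^{n} \frac{t^{x_i}}{\prod_{j\neq i}(x_i - x_j)},
$$
and then $\psi'(t) = \sum_{i=0}^{n} x_i\, t^{x_i-1}/\prod_{j\neq i}(x_i - x_j)$, which brings the factor $x_i$ out front. The central trick is the algebraic decomposition $x_i = x_0 + (x_i - x_0)$, which splits $\psi'(t)$ into two sums that can each be identified as a divided difference of $f_t$ at shifted nodes.

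For the first sum, after factoring out $x_0$, one observes that the denominator $\prod_{j\neq i}(x_i - x_j)$ is translation-invariant, so $\prod_{j\neq i}\bigl((x_i-1)-(x_j-1)\bigr)$ equals $\prod_{j\neq i}(x_i - x_j)$. Hence
$$
x_0 \sum_{i=0}^{n} \frac{t^{x_i-1}}{\prod_{j\neq i}(x_i - x_j)} \;=\; x_0\,[x_0-1, x_1-1,\ldots,x_n-1]f_{t},
$$
giving the first term on the right-hand side of (\ref{dividedderivative1}). For the second sum, the $i=0$ term vanishes because of the factor $(x_i - x_0)$; for $i\geq 1$, that factor cancels the $(x_i-x_0)$ appearing in the denominator, leaving exactly the divided difference supported on the $n$ shifted points $x_1-1,\ldots,x_n-1$:
$$
\sum_{i=1}^{n} \frac{t^{x_i-1}}{\prod_{\substack{j\neq i\\ j\neq 0}}(x_i - x_j)} \;=\; [x_1-1,\ldots,x_n-1]f_{t}.
$$
Adding the two contributions produces the desired identity.

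There is no real obstacle here; the lemma is a direct manipulation of the Cauchy formula for the divided difference, and the only thing to watch is the bookkeeping that the shift $x_i \mapsto x_i - 1$ leaves the Vandermonde-type denominator unchanged, and that the $(x_i - x_0)$ factor both kills the $i=0$ term and reduces the remaining $n$ terms to the desired $n$-point divided difference.
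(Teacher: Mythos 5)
Your proof is correct and rests on the same foundation as the paper's: the explicit sum representation (\ref{determinant}) of the divided difference together with the elementary identity $x_i = x_0 + (x_i - x_0)$, which is exactly the algebra the paper uses when it checks that $\frac{x_0}{x_k-x_0}+1 = \frac{x_k}{x_k-x_0}$. The only difference is presentational — you derive the right-hand side forward from $\psi'$, whereas the paper expands the right-hand side and matches the coefficient of each $t^{x_k-1}$ — so this counts as essentially the same argument.
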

\begin{proof}
By the definition of the divided differences, we have 
\begin{equation}\label{derivative10}
\psi'(t) = \sum^{n}_{i=0} \frac{x_i t^{x_i-1}}
{\prod^{n}_{j=0, j \neq i}{(x_i - x_j)}}.
\end{equation}
Now, the right hand side of the equation (\ref{dividedderivative1})
is given by 
\begin{equation}\label{dividedderivative2}
x_0 \sum^{n}_{i=0} \frac{t^{x_i-1}}
{\prod^{n}_{j=0, j \neq i}{(x_i - x_j)}}
+ \sum^{n}_{i=1} \frac{t^{x_i-1}}
{\prod^{n}_{j=1, j \neq i}{(x_i - x_j)}}.
\end{equation}
Let $k$ be an integer in $\{0,1,...,n\}$ and let us compare the coefficient 
of the monomial $t^{x_k-1}$ in both of the expressions (\ref{derivative10}) 
and (\ref{dividedderivative2}). For (\ref{derivative10}) 
the coefficient is given by 
$$
\frac{x_k}{\prod^{n}_{j=0, j \neq k}{(x_k - x_j)}},
$$
while for (\ref{dividedderivative2}), the coefficient is given for $k=0$ by 
$
\frac{x_0}{\prod^{n}_{j=0, j \neq 0}{(x_0 - x_j)}}
$ 
and for $k\neq 0$ by 
$$
\frac{1}{\prod^{n}_{j=1, j \neq k}{(x_k - x_j)}} 
\left(\frac{x_0}{x_k-x_0} +1 \right)= 
\frac{x_k}{\prod^{n}_{j=0, j \neq k}{(x_k - x_j)}}.
$$
The equality of the coefficients conclude the proof of the lemma.  
\end{proof}
A direct and simple consequence of the preceding lemma ( in which we omit the proof)
is the following
\begin{proposition}\label{derivative1}
Let $\Lambda = (0=r_{0},r_{1},...,r_{n})$ be a sequence of strictly
increasing real numbers such that $r_1 = 1$, and let
$H^{n}_{k,\Lambda}, k=0,...,n$ be the Gelfond-Bernstein 
basis, over the interval $[0,1]$, associated with the \muntz space 
$E_{\Lambda}(n)$. Then, we have
\begin{equation*}
{H_{k,\Lambda}^{n}}'(t) = \frac{\prod_{j=k+1}^{n} r_j}
{\prod_{j=k+1}^{n} (r_j-1)}
\left( r_k H^{n-1}_{k-1,\Lambda_1}(t) - (r_{k+1} -1) 
H^{n-1}_{k,\Lambda_1}(t) \right), 
\end{equation*}
where is to be understood that 
\begin{equation*}
{H_{0,\Lambda}^{n}}'(t) = - \frac{\prod_{j=2}^{n} r_j}
{\prod_{j=2}^{n} (r_j-1)}
H^{n-1}_{0,\Lambda_1}(t)
\quad \textnormal{;} \quad
{H_{n,\lambda}^{n}}'(t) = r_n H^{n}_{n,\Lambda_1}(t)
\end{equation*}
and $\Lambda_1$ is the sequence $\Lambda_1 = 
(0=r_0,r_2-1,r_3-1,...,r_n-1).$
\end{proposition}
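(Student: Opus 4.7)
The plan is to apply the preceding lemma to the divided-difference representation of $H^{n}_{k,\Lambda}$ and then repackage the two resulting divided differences as Gelfond--Bernstein basis elements over the shifted sequence $\Lambda_1$. Specifically, from the definition we have $H^{n}_{k,\Lambda}(t) = (-1)^{n-k} r_{k+1}\cdots r_n\,\psi_k(t)$ with $\psi_k(t) = [r_k,r_{k+1},\ldots,r_n]f_t$ for $k \leq n-1$, so the lemma gives at once
\begin{equation*}
{H^{n}_{k,\Lambda}}'(t) = (-1)^{n-k} r_{k+1}\cdots r_n \Bigl( r_k\,[r_k-1,r_{k+1}-1,\ldots,r_n-1]f_t + [r_{k+1}-1,\ldots,r_n-1]f_t \Bigr).
\end{equation*}

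The key observation is that, because $r_1 = 1$, the sequences appearing on the right are precisely (sub-sequences of) $\Lambda_1 = (0, r_2-1,\ldots,r_n-1)$. Using the definition again, but now applied to $\Lambda_1$ in degree $n-1$, yields
\begin{equation*}
[r_k-1,\ldots,r_n-1]f_t = \frac{(-1)^{n-k}\,H^{n-1}_{k-1,\Lambda_1}(t)}{\prod_{j=k+1}^{n}(r_j-1)}, \qquad [r_{k+1}-1,\ldots,r_n-1]f_t = \frac{(-1)^{n-1-k}\,H^{n-1}_{k,\Lambda_1}(t)}{\prod_{j=k+2}^{n}(r_j-1)}.
\end{equation*}
Substituting these two identities and collecting the common factor $\prod_{j=k+1}^{n} r_j/\prod_{j=k+1}^{n}(r_j-1)$ produces the stated formula after the signs $(-1)^{n-k}(-1)^{n-k} = 1$ and $(-1)^{n-k}(-1)^{n-1-k} = -1$ are tracked carefully.

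For the boundary cases, I would simply specialise the same computation. When $k=0$, the factor $r_0 = 0$ kills the first divided difference and only the second term contributes, giving the $-\prod_{j=2}^{n} r_j /\prod_{j=2}^{n}(r_j-1)$ formula (note that the $r_1-1 = 0$ denominator never appears, precisely because of the cancellation produced by $r_0=0$). When $k=n$, one differentiates $H^{n}_{n,\Lambda}(t) = t^{r_n}$ directly to get $r_n t^{r_n-1}$, which is $r_n$ times the top Gelfond--Bernstein element over $\Lambda_1$ as asserted.

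The only subtlety is bookkeeping: checking that $r_1=1$ makes $r_1-1=0$ the correct first entry of $\Lambda_1$ (so that $[r_1-1,\ldots,r_n-1]f_t$ is indeed proportional to $H^{n-1}_{0,\Lambda_1}$), and that the $(-1)$ powers align after the sign of the lemma's second term is absorbed. There is no analytic obstacle; the proof is essentially a one-line substitution, which is why the authors say they omit it.
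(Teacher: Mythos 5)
Your proposal is correct and follows exactly the route the paper intends: the paper omits the proof, stating only that the proposition is ``a direct and simple consequence of the preceding lemma,'' and your argument is precisely that consequence, with the sign and index bookkeeping (including the $k=0$ cancellation via $r_0=0$ and the $k=n$ case) carried out correctly.
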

From the last proposition, the following easily follows 
\begin{theorem}\label{r1=1}
Let $\Lambda = (0=r_{0},r_{1},...,r_{n})$ be a sequence of strictly
increasing real numbers such that $r_1 = 1$ and consider the Gelfond-B\'ezier
curve    
\begin{equation*}
P(t) = \sum_{k=0}^{n} H_{k,\Lambda}^{n}(t) P_{k}.
\end{equation*}
Then, we have 
\begin{equation*}
P'(t) =
\sum_{k=0}^{n-1} \frac{\prod_{j=k+1}^{n} r_j}
{\prod_{j=k+2}^{n} (r_j-1)}
H_{k,\Lambda_1}^{n-1}(t) \Delta{P_{k}},
\end{equation*}
where  $\Lambda_1$ is the sequence $\Lambda_1 =
(0=r_0,r_2-1,r_3-1,...,r_n-1)$.
We adopt the convention that $\prod_{k=n+1}^{n}* = 1$.
\end{theorem}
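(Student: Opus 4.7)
The plan is to differentiate $P(t)=\sum_{k=0}^{n} H_{k,\Lambda}^{n}(t)P_{k}$ term by term using Proposition \ref{derivative1}, and then reorganize by collecting the coefficient of each basis function $H^{n-1}_{m,\Lambda_1}(t)$ for $m=0,\dots,n-1$. After that, a short algebraic identity will match the pair of contributions attached to each $H^{n-1}_{m,\Lambda_1}(t)$ into a single $\Delta P_m = P_{m+1}-P_m$ term with the claimed coefficient.

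Concretely, introduce the shorthand
$$
c_k \;=\; \frac{\prod_{j=k+1}^{n} r_j}{\prod_{j=k+1}^{n}(r_j-1)}, \qquad 1\le k\le n-1.
$$
Applying Proposition \ref{derivative1} separately at the endpoints $k=0$ and $k=n$ and in the generic range $1\le k\le n-1$, each basis function $H^{n-1}_{m,\Lambda_1}(t)$ with $1\le m\le n-2$ appears exactly twice in $P'(t)$: once in the $k=m$ contribution with weight $-c_m(r_{m+1}-1)P_m$, and once in the $k=m+1$ contribution with weight $c_{m+1}r_{m+1}P_{m+1}$. The telescoping identity
$$
c_{m+1}r_{m+1} \;=\; r_{m+1}\cdot \frac{\prod_{j=m+2}^{n}r_j}{\prod_{j=m+2}^{n}(r_j-1)}
\;=\; (r_{m+1}-1)\cdot\frac{\prod_{j=m+1}^{n}r_j}{\prod_{j=m+1}^{n}(r_j-1)}
\;=\; c_m(r_{m+1}-1)
$$
then shows that these two weights share a common factor, which is precisely $\frac{\prod_{j=m+1}^{n}r_j}{\prod_{j=m+2}^{n}(r_j-1)}$, and their combined contribution is that factor times $P_{m+1}-P_m = \Delta P_m$.

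Next I would verify the two endpoint cases. For $m=0$ the key point is that the hypothesis $r_1=1$ is what reconciles the special formula for ${H_{0,\Lambda}^{n}}'(t)$ with the generic formula at $k=1$: the generic weight $c_1 r_1 P_1$ reduces to $c_1 P_1$, which exactly matches $-\frac{\prod_{j=2}^n r_j}{\prod_{j=2}^n(r_j-1)}P_0 = -c_1 P_0$ in a way that gives $c_1\Delta P_0$, and since $r_1=1$ one has $c_1 = \frac{\prod_{j=1}^{n} r_j}{\prod_{j=2}^{n}(r_j-1)}$, which is the claimed coefficient at $m=0$. For $m=n-1$ the generic $k=n-1$ contribution provides $-c_{n-1}(r_n-1)P_{n-1}=-r_nP_{n-1}$, and the endpoint formula ${H_{n,\Lambda}^{n}}'(t) = r_n H^{n-1}_{n-1,\Lambda_1}(t)$ provides $r_n P_n$, giving $r_n\Delta P_{n-1}$, which agrees with the empty product convention $\prod_{j=n+1}^{n}(r_j-1) = 1$ in the coefficient $\frac{\prod_{j=n}^{n}r_j}{\prod_{j=n+1}^{n}(r_j-1)}=r_n$.

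There is no real obstacle beyond careful index bookkeeping; the only subtlety is that the formula of Proposition \ref{derivative1} has genuinely different shapes at $k=0$ (where $r_1-1=0$ would otherwise make $c_0$ singular) and at $k=n$, and one must check that both endpoints fit into the same telescoping pattern. Once the identity $c_{m+1}r_{m+1}=c_m(r_{m+1}-1)$ is established and used to absorb both endpoints via the assumption $r_1=1$ and the empty product convention, the reorganization of $P'(t)$ yields exactly the stated expression, completing the proof.
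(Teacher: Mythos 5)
Your proposal is correct and is exactly the argument the paper intends: the paper states that the theorem ``easily follows'' from Proposition \ref{derivative1}, and your telescoping reorganization---grouping the two contributions to each $H^{n-1}_{m,\Lambda_1}$, using the identity $c_{m+1}r_{m+1}=c_m(r_{m+1}-1)$, and handling the $m=0$ and $m=n-1$ endpoints via $r_1=1$ and the empty-product convention---is precisely the omitted computation. No gaps.
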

From the last theorem, we conclude that for sequences 
$\Lambda = (0=r_0,r_1,...,r_n)$ such that $r_1 = 1$, the associated
Gelfond-B\'ezier curves satisfy the tangency property 
at the end points, namely, we have  
\begin{equation*}
P'(0) =  \frac{\prod_{j=2}^{n} r_j}{\prod_{j=2}^{n} (r_j-1)}
\Delta{P_{0}} \quad \textnormal{and} \quad 
P'(1) = r_n \Delta{P_{n-1}}. 
\end{equation*}
In the case we have a sequence $\Lambda = (0=r_0,r_1,...,r_n)$ 
of strictly increasing real numbers such that $r_1 > 1$, then we can 
embed the \muntz space $E_1=span(1,t^{r_1},t^{r_2},...,t^{r_n})$ 
into the space $E_2=span(1,t,t^{r_1},t^{r_2},...,t^{r_n})$ 
in which the relation between the Gelfond-Bernstein bases of the spaces
$E_1$ and $E_2$ is given in the forthcoming proposition \ref{dimensionproposition}.
We can then apply proposition \ref{derivative1} to compute the derivatives
of the Gelfond-Bernstein bases associated with the \muntz space $E_2$.
Such a program, in which we omit the details due to their simplicity, leads to 
\begin{proposition}
Let $\Lambda = (0=r_{0},r_{1},...,r_{n})$ be a sequence of strictly
increasing real numbers such that $r_1 > 1$ and let
$H^{n}_{k,\Lambda}, k=0,...,n$ be the Gelfond-Bernstein 
basis associated with the \muntz space $E_{\Lambda}(n)$.
Then, we have, for $1 \leq k \leq n-1$
\begin{equation*}
{H_{k,\Lambda}^{n}}'(t) = \frac{\prod_{j=k+1}^{n} r_j}
{\prod_{j=k+1}^{n} (r_j-1)}
\left( r_k H^{n}_{k,\Lambda_1}(t) - (r_{k+1} -1) 
H^{n}_{k+1,\Lambda_1}(t) \right) 
\end{equation*}
and
\begin{equation*}
{H_{0,\Lambda}^{n}}'(t) = - \frac{\prod_{j=1}^{n} r_j}
{\prod_{j=2}^{n} (r_j-1)} H^{n}_{1,\Lambda_1}(t) 
\quad \textnormal{and} \quad
{H_{n,\Lambda}^{n}}'(t) = r_n H^{n}_{n,\Lambda_1}(t),
\end{equation*}
where $\Lambda_1$ is the sequence $\Lambda_1 = 
(0=r_0,r_1-1,r_2-1,...,r_n-1)$.
\end{proposition}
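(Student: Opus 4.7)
The plan is to embed the \muntz space $E_{\Lambda}(n)$ into the enlarged space $\tilde{E} = \mathrm{span}(1, t, t^{r_1}, t^{r_2}, \ldots, t^{r_n})$, which corresponds to the augmented sequence $\tilde{\Lambda} = (0, 1, r_1, r_2, \ldots, r_n)$ of length $n+2$. Because $\tilde{r}_1 = 1$, the sequence $\tilde{\Lambda}$ satisfies the hypothesis of Proposition \ref{derivative1}. Moreover, the shifted sequence associated with $\tilde{\Lambda}$ by that proposition is $(0, \tilde{r}_2 - 1, \tilde{r}_3 - 1, \ldots, \tilde{r}_{n+1} - 1) = (0, r_1 - 1, r_2 - 1, \ldots, r_n - 1) = \Lambda_1$, precisely the sequence appearing in the target statement. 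This alignment is the algebraic reason the embedding is the right tool.

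First I would invoke the forthcoming dimension-elevation identity (Proposition \ref{dimensionproposition}) to write each $H^{n}_{k,\Lambda}$ as an explicit linear combination
\begin{equation*}
H^{n}_{k,\Lambda}(t) \;=\; \sum_{j} c_{k,j}\, H^{n+1}_{j,\tilde{\Lambda}}(t),
\end{equation*}
valid on $[0,1]$. This is a purely functional identity (no derivatives), and it transfers the problem to differentiating the basis of $\tilde{E}$, which we are entitled to do via Proposition \ref{derivative1}. Differentiating term by term then yields ${H^{n}_{k,\Lambda}}'(t) = \sum_{j} c_{k,j}\, {H^{n+1}_{j,\tilde{\Lambda}}}'(t)$, where each derivative on the right is expressed as a two-term combination of $H^{n}_{j-1,\Lambda_1}$ and $H^{n}_{j,\Lambda_1}$ with coefficients built from $\tilde{r}_j$ and $\tilde{r}_j - 1$.

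Next I would collect the adjacent contributions landing on the same index $i$ to rewrite ${H^{n}_{k,\Lambda}}'$ as a linear combination of the $H^{n}_{i,\Lambda_1}$. The expected outcome is that only two indices survive, namely $i=k$ and $i=k+1$, yielding the advertised expression $r_k H^{n}_{k,\Lambda_1}(t) - (r_{k+1}-1)H^{n}_{k+1,\Lambda_1}(t)$ up to the global prefactor $\prod_{j=k+1}^{n} r_j / \prod_{j=k+1}^{n}(r_j-1)$. The boundary cases $k=0$ and $k=n$ will fall out because certain contributions vanish: for $k=0$ the term with index $k-1$ disappears by the convention that $H^{n}_{-1,\Lambda_1} \equiv 0$, and for $k=n$ only the top term $H^{n}_{n,\Lambda_1} = t^{r_n - 1}$ survives, whose coefficient produces exactly $r_n H^{n}_{n,\Lambda_1}$.

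The principal obstacle, and the reason the authors allow themselves to omit the details, is a pure bookkeeping computation: one must check that the product-of-ratios coefficients produced by dimension elevation multiply cleanly with those of Proposition \ref{derivative1} to collapse to the simple ratio in the statement. This collapse is not accidental; it reflects the compatibility between the shift $\Lambda \mapsto \Lambda_1$ and the differentiation $t^{r_i} \mapsto r_i t^{r_i - 1}$, so the products telescope. Once this telescoping identity is verified, the proposition follows without any further analysis.
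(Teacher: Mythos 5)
Your proposal is correct and is essentially the paper's own argument: the authors likewise embed $E_{\Lambda}(n)$ into $span(1,t,t^{r_1},\ldots,t^{r_n})$, invoke Proposition \ref{dimensionproposition} to relate the two Gelfond--Bernstein bases, and then apply Proposition \ref{derivative1} to the enlarged space (whose shifted sequence is exactly $\Lambda_1$), omitting the remaining bookkeeping ``due to its simplicity.'' Your sketch is in fact slightly more explicit than the paper's, the only small imprecision being that in the $k=0$ case the surviving single term arises from a cancellation of the two $H^{n}_{0,\Lambda_1}$ contributions coming from $H^{n+1}_{0,\tilde\Lambda}$ and $H^{n+1}_{1,\tilde\Lambda}$, rather than from a convention that $H^{n}_{-1,\Lambda_1}\equiv 0$.
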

From the last proposition, the following easily follows 
\begin{theorem}\label{derivative2}
Let $\Lambda = (0=r_{0},r_{1},...,r_{n})$ be a sequence of strictly
increasing real numbers such that $r_1 > 1$ and consider the 
Gelfond-B\'ezier curve    
\begin{equation*}
P(t) = \sum_{k=0}^{n} H_{k,\Lambda}^{n}(t) P_{k}.
\end{equation*}
Then, we have 
\begin{equation*}
P'(t) =
\sum_{k=1}^{n} \frac{\prod_{j=k}^{n} r_j}{\prod_{j=k+1}^{n} (r_j-1)}
H_{k,\Lambda_1}^{n}(t) \Delta{P_{k-1}},
\end{equation*}
where $\Lambda_1$ is the sequence $\Lambda_1 = 
(0=r_0,r_1-1,r_2-1,...,r_n-1)$.We adopt the convention 
that $\prod_{k=n+1}^{n}* = 1$.
\end{theorem}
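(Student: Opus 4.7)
The plan is simply to differentiate $P(t) = \sum_{k=0}^{n} H^{n}_{k,\Lambda}(t) P_k$ term by term, substitute in the derivative formulas from the preceding proposition, and then re-index the sum to collect terms by $H^{n}_{k,\Lambda_1}(t)$. Since each interior ${H^{n}_{k,\Lambda}}'$ is a combination of exactly two basis functions of $E_{\Lambda_1}(n)$, namely $H^{n}_{k,\Lambda_1}$ and $H^{n}_{k+1,\Lambda_1}$, a given $H^{n}_{k,\Lambda_1}$ will pick up contributions from at most two adjacent control points $P_{k-1}$ and $P_k$, which should naturally produce a forward difference $\Delta P_{k-1}$.

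More concretely, I would first write
\begin{equation*}
P'(t) = \sum_{k=0}^{n} {H^{n}_{k,\Lambda}}'(t)\, P_k
\end{equation*}
and substitute the three cases from the preceding proposition: the interior formula for $1 \leq k \leq n-1$, together with the boundary expressions for $k=0$ and $k=n$. Then I would compute the coefficient of $H^{n}_{k,\Lambda_1}(t)$ for each $k \in \{1,\dots,n\}$. For $2 \leq k \leq n-1$, the coefficient comes from the $+r_k H^{n}_{k,\Lambda_1}$ contribution of ${H^{n}_{k,\Lambda}}'$ weighted by $P_k$ and the $-(r_k-1) H^{n}_{k,\Lambda_1}$ contribution of ${H^{n}_{k-1,\Lambda}}'$ weighted by $P_{k-1}$. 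Simplifying the prefactors $\prod_{j=k+1}^n r_j/\prod_{j=k+1}^n (r_j-1)$ and $\prod_{j=k}^n r_j/\prod_{j=k}^n (r_j-1)$ to a common form yields $\prod_{j=k}^n r_j/\prod_{j=k+1}^n (r_j-1)$ multiplying $(P_k - P_{k-1}) = \Delta P_{k-1}$.

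The only remaining task is to verify that the endpoint cases mesh with this general formula under the convention $\prod_{k=n+1}^{n} \ast = 1$. For $k=1$, the contribution from ${H^{n}_{0,\Lambda}}'$ is $-\prod_{j=1}^{n} r_j / \prod_{j=2}^{n}(r_j-1)\, H^{n}_{1,\Lambda_1}(t)\, P_0$, which combined with the $P_1$ term reproduces the general coefficient at $k=1$. For $k=n$, ${H^{n}_{n,\Lambda}}'$ contributes $r_n H^{n}_{n,\Lambda_1}(t) P_n$ and ${H^{n}_{n-1,\Lambda}}'$ contributes $-r_n H^{n}_{n,\Lambda_1}(t) P_{n-1}$, giving $r_n\,\Delta P_{n-1}$, which is exactly what the general formula produces at $k=n$ under the empty-product convention. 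Finally, no $H^{n}_{0,\Lambda_1}(t)$ term appears, because ${H^{n}_{0,\Lambda}}'$ involves only $H^{n}_{1,\Lambda_1}$, consistent with the sum starting at $k=1$.

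There is no real obstacle in this argument; it is a pure book-keeping computation. The only delicate point is keeping track of the two different product ranges for $\prod r_j$ and $\prod (r_j - 1)$ when merging the prefactors, and handling the boundary terms so that they are subsumed by the general formula via the empty-product convention. Once the algebra is lined up, the conclusion is immediate.
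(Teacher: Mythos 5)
Your proposal is correct and is exactly the computation the paper intends: the paper omits the proof of this theorem, stating only that it ``easily follows'' from the preceding proposition, and the bookkeeping you describe --- substituting the three derivative formulas, merging the prefactors via $r_k\prod_{j=k+1}^{n}r_j=\prod_{j=k}^{n}r_j$ and $(r_k-1)^{-1}\prod_{j=k}^{n}(r_j-1)^{-1}\cdot(r_k-1)=\prod_{j=k+1}^{n}(r_j-1)^{-1}$, and checking the $k=1$ and $k=n$ endpoints against the empty-product convention --- is precisely that omitted argument, carried out correctly.
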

Note that from Theorem \ref{derivative2}, we have $P'(0) = 0$. Therefore, 
for a sequence $\Lambda = (0=r_{0},r_{1},...,r_{n})$ of strictly increasing real numbers
such that $r_1$ is a positive integer, we can iterate the statement of Theorem \ref{derivative2}
to conclude that we
have 
\begin{equation*}
P'(0) = P^{(2)}(0) = ... = P^{(r_1-1)}(0) = 0
\quad \textnormal{and} \quad 
P^{(r_1)}(0) = \frac{\prod_{j=1}^n r_j}
{\prod_{j=1}^n (r_j-r_1)} \Delta P_0, 
\end{equation*}
thereby, showing that the Gelfond-B\'ezier curve is geometrically tangent 
to the control segment $[P_0,P_1]$ for the parameter $t=0$. Moreover, we have 
$P'(1) = r_n \Delta{P_{n-1}}$ showing that Gelfond-B\'ezier curves associated 
with sequences $\Lambda = (0=r_{0},r_{1},...,r_{n})$ of strictly increasing 
real numbers such that $r_1$ is a positive integer satisfy the tangency 
property at the end points. 
  
\section{Examples of Gelfond-Bernstein bases }
For sequences $\Lambda = (0=r_0,r_1,...,r_n)$ of strictly increasing integers, 
Proposition \ref{gelfondexpression} gives an alternative method of deriving
Gelfond-Bernstein bases of \muntz spaces using the combinatoric of Schur functions
instead of computing with the divided differences. In this section,
we will exhibit the usefulness of this approach by giving the Gelfond-Bernstein 
bases of some specific \muntz spaces. As horizontal, vertical and hook Young 
diagrams occupy an important place in the combinatorics of Schur functions, 
it is only natural to define the \muntz spaces associated with these particular
Young diagrams and compute their Gelfond-Bernstein bases. We will also give one
example of a low order \muntz space for a better emphazise on our main point.
In section 2, we recalled several alternative way of computing Schur functions 
for integer partitions, such as the Jacobi-Trudi formula, the N\"agelsbach-Kostka
formula and the Giambelli formula. It is at this point of trying to derive explicit
expressions of the Gelfond-Bernstein bases using Proposition \ref{gelfondexpression} that
the reader will feel the importance of these alternative way of computing Schur 
functions and our reason of reminding them. We will not fully exhibit this fact
here, but the reader is invited to compute the Gelfond-Bernstein bases of more 
complicated \muntz spaces to be aware of the importance of the combinatorics 
of Schur functions. The same remarks apply to the computation of the blossom and
the derivation of the de Casteljau algorithms in the next section.    
\vskip 0.2cm
\noindent{\bf{Notations 2: }}In notations 1, we have denoted as $E_{\Lambda}(n)$
or $\mathcal{E}_{\lambda}(n)$ the \muntz 
space $E = span(1,t^{r_1},t^{r_2},...,t^{r_n})$ 
so as to emphasize the sequence  $\Lambda$ or the associated real 
partition $\lambda$ depending on the context.
We will imitate these notations for the Gelfond-Bernstein 
basis of the \muntz space $E_{\Lambda}(n) = \mathcal{E}_{\lambda}(n)$,
in which we denote them as $H^{n}_{k,\Lambda}$ or 
$\mathcal{H}^{n}_{k,\lambda}$ depending on the contextual emphasize. 
\vskip 0.2 cm
\vskip 0.2 cm
\noindent{\bf{Polynomial M\"untz space: }}
Consider the \muntz space associated with the sequence $\Lambda = (0,1,2,...,n)$, 
namely the \muntz space $span(1,t,t^2,...,t^n)$. 
The partition $\lambda$ associated with the sequence $\Lambda$ is the empty
partition, the bottom partition of $\lambda$ is also empty. Therefore, Theorem 
\ref{bernsteintheorem} states that the Gelfond-Bernstein basis associated with the sequence $\Lambda$ 
coincide with the classical Bernstein basis over the interval $[0,1]$.
\vskip 0.2 cm
\noindent{\bf{Combinatorial M\"untz space: }} 
Consider the \muntz space  $E_{\Lambda}(4) = span(1,t^3,$ 
$t^4,t^6,t^9)$
of order $4$. The sequence $\Lambda$ is given by  
$\Lambda = (r_0 =0,r_1 = 3,r_2 = 4,r_3 = 6,r_4 = 9)$.
The partition $\lambda$ associated with the sequence $\Lambda$ is given 
by $\lambda = (5,3,3,2)$. Let us, for example, compute the element 
$H^4_{2,\Lambda}$ of the Gelfond-Bernstein basis associated with the \muntz 
space $E_{\Lambda}(4)$. From proposition \ref{gelfondexpression}, we have 
\begin{equation*}
H^{4}_{2,\Lambda}(t) = \frac{\prod_{i=3}^{4}{r_{i}}}
{\prod_{i=3}^{4}{(r_{i}-r_{2})}} 
t^{4} (1-t)^{2}
\frac{\tiny S_{\yng(3,2)}(1,t^2)}{\tiny S_{\yng(2)}(t^2)}.
\end{equation*}
Therefore, using the branching rule (\ref{branchingrule1}),
the expression of $H^{4}_{2,\Lambda}(t)$ is given by 
\begin{equation*}
\frac{27}{5} t^{4} (1-t)^{2}
\frac{\tiny t^3  f_{\yng(3,2)}(2) + t^2  ( f_{\yng(2,2)}(2) + f_{\yng(3,1)}(2))
+  t ( f_{\yng(3)}(2) + f_{\yng(2,1)}(2)) + f_{\yng(2)}(2)}
{\tiny f_{\yng(2)}(2)}.
\end{equation*}
Using the hook length formula (\ref{combinatorialhook}), we obtain  
\begin{equation*}
H^{4}_{2,\Lambda}(t) = \frac{27}{15} t^{4}(1-t)^2 (3 + 6t + 4t^2 + 2 t^3). 
\end{equation*}
\vskip 0.2 cm
\noindent {\bf {Elementary \muntz spaces}}
Let $l$ and $n$ be two positive integers such that $1 \leq l \leq n$.
Consider the \muntz space of order $n$, defined for $l \neq 1$ by
$E = span(1,t,t^2,...,t^{l-1},t^{l+1},...,t^{n+1})$ and 
$E = span(1,t^2,t^3,...,t^{n+1})$ for $l=1$.
The partition $\lambda$ associated with the \muntz space $E$ is 
given by a vertical Young diagram with $l$ boxes, i.e,
$\lambda = (1^{l})$. For this reason, we have called these
\muntz spaces in \cite{aithaddou1} the $l$th elementary \muntz spaces. 
The sequence $\Lambda$ associated with $E$ is given by 
$\Lambda=(r_0=0,r_1=1,...,r_{l-1}=l-1,r_{l}=l+1,
r_{l+1}=l+2,...,r_{n}=n+1)$.
Let us first compute the Gelfond-Bernstein basis 
$\mathcal{H}^{n}_{k,(1^l)}$ when $k \leq l-1$. In this case
we have
\begin{equation*} 
\frac{\prod_{i=k+1}^{n}{r_{i}}}
{\prod_{i=k+1}^{n}{(r_{i}-r_{k})}} = \frac{l-k}{l} \binom{n+1}{k}.
\end{equation*}
Therefore, by Proposition \ref{gelfondexpression}, for $k \leq l-1$, we have 
$$
\mathcal{H}^{n}_{k,(1^l)}(t) = \frac{l-k}{l} \binom{n+1}{k} 
t^{k} (1-t)^{n-k} \frac{e_{l-k}(1,t^{n-k})}{e_{l-k-1}(t^{n-k})}.
$$
Using the branching rule $e_{l-k}(1,t^{n-k}) = e_{l-k}(t^{n-k}) 
+ e_{l-k-1}(t^{n-k})$, we obtain 
$$ 
\mathcal{H}^{n}_{k,(1^l)}(t) = \frac{l-k}{l} \binom{n+1}{k} 
t^{k} (1-t)^{n-k} \left( 1+ \frac{n-l+1}{l-k}t \right).
$$
For the case $k \geq l$, we have 
\begin{equation*} 
\frac{\prod_{i=k+1}^{n}{r_{i}}}
{\prod_{i=k+1}^{n}{(r_{i}-r_{k})}} =  \binom{n+1}{k+1},
\end{equation*}
and then Proposition \ref{gelfondexpression}, gives 
$$
\mathcal{H}^{n}_{k,(1^l)}(t) = \binom{n+1}{k+1} t^{k+1} (1-t)^{n-k} = B^{n+1}_{k+1}(t),
$$
where $B^{n+1}_{k+1}$ is the classical Bernstein polynomials.
Summarizing
\begin{proposition}
The Gelfond-Bernstein basis of the elementary \muntz space 
$\mathcal{E}_{(1^l)}(n)$ with respect to the interval $[0,1]$
is given by
$$
\mathcal{H}^{n}_{k,(1^l)}(t) =  \frac{l-k}{l} \binom{n+1}{k} 
t^{k} (1-t)^{n-k} \left( 1+ \frac{n-l+1}{l-k}t \right)
\quad \textnormal{for} \quad k=0,...,l-1
$$
and 
$$
\mathcal{H}^{n}_{k,(1^l)}(t) = B^{n+1}_{k+1}(t)
\quad \textnormal{for} \quad k=l,...,n,
$$
where $B^{n+1}_{k+1}$ is the classical Bernstein polynomials.
\end{proposition}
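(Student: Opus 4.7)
The plan is to apply Proposition \ref{gelfondexpression} directly to the sequence $\Lambda = (0,1,\dots,l-1,l+1,\dots,n+1)$ whose associated real partition is $\lambda = (1^l)$, and to carry out the resulting simplifications separately in the two regimes $k\le l-1$ and $k\ge l$. The only inputs needed are (i) the explicit form of the prefactor $\prod_{i=k+1}^n r_i / \prod_{i=k+1}^n(r_i-r_k)$ and (ii) the identification of the Schur functions $S_{(\lambda_{k+1},\dots,\lambda_n)}$ and $S_{(\lambda_{k+2},\dots,\lambda_n)}$ as elementary symmetric functions, which is immediate because $(1^l)$ is a single column.

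First I would handle the case $k\ge l$. Here $r_k = k+1$, both trimmed partitions $(\lambda_{k+1},\dots,\lambda_n)$ and $(\lambda_{k+2},\dots,\lambda_n)$ are empty, and so both Schur functions equal $1$. A direct factorial computation gives
\[
\frac{\prod_{i=k+1}^n r_i}{\prod_{i=k+1}^n(r_i-r_k)} = \frac{(n+1)!/(k+1)!}{(n-k)!} = \binom{n+1}{k+1},
\]
and plugging into Proposition \ref{gelfondexpression} yields $t^{k+1}(1-t)^{n-k}\binom{n+1}{k+1} = B^{n+1}_{k+1}(t)$.

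Next I would treat the case $k\le l-1$. Now $r_k = k$, the partition $(\lambda_{k+1},\dots,\lambda_n)$ is $(1^{l-k})$ (its Schur function is $e_{l-k}$), and $(\lambda_{k+2},\dots,\lambda_n)$ is $(1^{l-k-1})$ (Schur function $e_{l-k-1}$). Splitting the products at the gap at $i=l$ and simplifying,
\[
\prod_{i=k+1}^n r_i = \frac{(l-1)!}{k!}\cdot\frac{(n+1)!}{l!}, \qquad
\prod_{i=k+1}^n(r_i-r_k) = (l-1-k)!\cdot\frac{(n+1-k)!}{(l-k)!},
\]
whose ratio collapses to $\tfrac{l-k}{l}\binom{n+1}{k}$. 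For the Schur ratio, I would apply the elementary-symmetric branching identity $e_{l-k}(1,t^{n-k}) = e_{l-k}(t^{n-k}) + e_{l-k-1}(t^{n-k})$, and use $e_{l-k}(t^{n-k}) = \binom{n-k}{l-k}t^{l-k}$, $e_{l-k-1}(t^{n-k}) = \binom{n-k}{l-k-1}t^{l-k-1}$, so that
\[
\frac{e_{l-k}(1,t^{n-k})}{e_{l-k-1}(t^{n-k})} = 1 + \frac{n-l+1}{l-k}\,t.
\]
Combining these pieces with $t^{r_k}=t^k$ in Proposition \ref{gelfondexpression} gives the stated formula.

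No conceptual obstacle is expected; the only delicate step is the factorial bookkeeping for the prefactor in the first regime, where one must correctly account for the missing index $i=l$ (caused by $r_l = l+1$ rather than $l$). If desired, this can be cross-checked by verifying the normalization $\sum_{k=0}^n \mathcal{H}^n_{k,(1^l)}(t) = 1$ implied by Corollary \ref{total}, but this is not strictly needed for the proof.
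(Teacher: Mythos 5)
Your proposal is correct and follows essentially the same route as the paper: apply Proposition \ref{gelfondexpression} to the sequence $\Lambda$ with $r_i=i$ for $i\le l-1$ and $r_i=i+1$ for $i\ge l$, evaluate the prefactor in the two regimes, and reduce the Schur ratio via the branching identity $e_{l-k}(1,t^{n-k})=e_{l-k}(t^{n-k})+e_{l-k-1}(t^{n-k})$. The only difference is that you spell out the factorial bookkeeping for the prefactors, which the paper states without detail; your computations check out.
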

\vskip 0.5 cm
\noindent {\bf {Complete \muntz spaces}}
Let $l$ be a non-negative integer and consider the \muntz space 
of order $n$, $E = span(1,t^{l+1},...,t^{l+n})$.  
The partition associated with $E$ is given by a horizontal
Young diagram with $l$ boxes, i.e., $\lambda = (l)$.
For this reason, we call the \muntz space 
$E= \mathcal{E}_{(l)}(n)$ the $l$th complete \muntz space.
The bottom partition $\lambda^{(0)}$ is an empty partition.
The sequence $\Lambda$ is given by $\Lambda = (r_0 = 0, r_1 = l+1,
...r_j = l+j,...,r_n = l+n$. For any integer $0 \leq k \leq n$,
we have
\begin{equation*}
\frac{\prod_{i=k+1}^{n}{r_{i}}}
{\prod_{i=k+1}^{n}{(r_{i}-r_{k})}} = 
\binom{n+l}{k+l} \; \textnormal{if} \; k \neq 0 
\quad \textnormal{and} \quad
\frac{\prod_{i=k+1}^{n}{r_{i}}}
{\prod_{i=k+1}^{n}{(r_{i}-r_{k})}} = 1
\; \textnormal{if} \; k = 0. 
\end{equation*} 
Therefore, by Proposition \ref{gelfondexpression}, we have 
$$
\mathcal{H}^{n}_{0,(l)}(t) = (1-t)^n h_{l}(1,t^{n})
$$
and 
$$
\mathcal{H}^{n}_{k,(l)}(t) = \binom{n+l}{k+l} t^{l+k} (1-t)^{n-k} = 
B^{n+l}_{k+l}(t)
\quad \textnormal{for} \quad k=1,...,n. 
$$
Summarizing, 
\begin{proposition}
The Gelfond-Bernstein basis of the complete \muntz space 
$\mathcal{E}_{(l)}(n)$ with respect to the interval $[0,1]$ is given by
$$
\mathcal{H}^{n}_{0,(l)}(t) = (1-t)^n \sum_{j=0}^{l} 
\binom{n+j-1}{n-1} t^{j}
$$
and
$$
\mathcal{H}^{n}_{k,(l)}(t) = B^{n+l}_{k+l}(t)
\quad \textnormal{for} \quad k=1,...,n, 
$$
where $B^{n+l}_{k+l}$ is the classical Bernstein basis.
\end{proposition}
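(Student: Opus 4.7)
The plan is to apply Proposition \ref{gelfondexpression} directly to the sequence $\Lambda = (0, l+1, l+2, \ldots, l+n)$, whose associated real partition is $\lambda = (l)$ (so $\lambda^{(0)}$ is the empty partition). For $1 \leq k \leq n-1$, the tail partitions $(\lambda_{k+1}, \ldots, \lambda_n)$ and $(\lambda_{k+2}, \ldots, \lambda_n)$ are both empty, hence the Schur-function ratio equals $1$. The prefactor simplifies as
$$
\frac{\prod_{i=k+1}^{n}(l+i)}{\prod_{i=k+1}^{n}(i-k)}
= \frac{(l+n)!/(l+k)!}{(n-k)!} = \binom{n+l}{k+l},
$$
and combining this with $t^{r_k} = t^{l+k}$ and the factor $(1-t)^{n-k}$ yields precisely $B^{n+l}_{k+l}(t)$. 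The boundary case $k = n$, namely $H^n_{n,\Lambda}(t) = t^{l+n} = B^{n+l}_{n+l}(t)$, fits the same formula.

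For $k = 0$, the prefactor $\prod_{i=1}^n r_i/\prod_{i=1}^n(r_i - r_0) = 1$ and $t^{r_0} = 1$, so Proposition \ref{gelfondexpression} reduces to $H^n_{0,\Lambda}(t) = (1-t)^n S_{(l)}(1,t^n)/S_\emptyset(t^n) = (1-t)^n h_l(1,t^n)$, using that $S_{(l)} = h_l$.

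The remaining task is to expand $h_l(1,t,\ldots,t)$ (with $n$ copies of $t$) as a polynomial in $t$. The cleanest route is via the generating function
$$
\sum_{l \geq 0} h_l(1,t^n)\,z^l = \frac{1}{1-z}\cdot\frac{1}{(1-tz)^n};
$$
extracting the coefficient of $z^l$ (by convolving the geometric series $\sum z^j$ with $\sum \binom{n+j-1}{j}(tz)^j$) gives $\sum_{j=0}^{l}\binom{n+j-1}{n-1}t^j$. Equivalently, one can argue combinatorially: a term $t^j$ in $h_l(1,t,\ldots,t)$ comes from a multiset of size $l$ in $\{x_0,x_1,\ldots,x_n\}$ (with $x_0=1$, $x_1=\cdots=x_n=t$) containing exactly $j$ letters from $\{x_1,\ldots,x_n\}$, and the number of size-$j$ multisets on $n$ symbols is $\binom{n+j-1}{n-1}$.

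There is no serious obstacle here: the proposition is essentially a direct specialization of Proposition \ref{gelfondexpression} to a horizontal Young diagram, combined with a standard identity for the complete homogeneous symmetric function. The only point requiring care is matching the normalizations when separating the anomalous $k=0$ case from the uniform Bernstein expression for $k \geq 1$, and making sure the convention that empty-partition Schur functions evaluate to $1$ is applied consistently.
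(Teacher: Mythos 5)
Your proposal is correct and follows essentially the same route as the paper: a direct specialization of Proposition \ref{gelfondexpression} to $\lambda=(l)$, with the prefactor collapsing to $\binom{n+l}{k+l}$ for $k\geq 1$ and to $1$ for $k=0$, leaving only the expansion of $h_l(1,t^n)$. The paper obtains that expansion implicitly via the branching rule and the hook-length value $f_{(j)}(n)=\binom{n+j-1}{j}$, whereas you use the generating function $\prod_i(1-u_iz)^{-1}$ (or the multiset count), which is an equivalent computation.
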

\vskip 0.2 cm
\noindent{\bf {Hook M\"untz spaces: }}
Let $l$ and $n$ be two positive integers and let $m$ be a 
positive integer such that $0< m < n$. Consider the \muntz space
of order $n$, $E = span(1,t^{l+1},t^{l+2},...,t^{l+m},
t^{l+m+2},...,t^{l+n+1})$. The partition $\lambda$ associated
with the space $E$ is given by a $(l,m)$-hook Young diagram,
i.e., $\lambda = (l|m)$. Therefore, we call the space 
$E =\mathcal{E}_{(l|m)}(n)$ the $(l|m)$-hook \muntz space.
For $k = 0$, Proposition \ref{gelfondexpression} gives
\begin{equation*}
\mathcal{H}^{n}_{0,(l|m)}(t) = 
(1-t)^{n}
\frac{S_{(l|m)}(1,t^n)}{e_{m}(t^n)}. 
\end{equation*}
The branching rule (\ref{branchingrule2}) leads to 
\begin{equation*}
S_{(l|m)}(1,t^n) = S_{(l|m)}(t^n) + e_m(t^n) \sum_{j=1}^{l+1} h_{l+1-j}(t^n).
\end{equation*}
Therefore, 
\begin{equation}\label{forsummarize}
\frac{S_{(l|m)}(1,t^n)}{e_{m}(t^n)} = \frac{1}{f_{(1^m)}(n)} \left( 
t^{l+1} f_{(l|m)}(n) +  \sum_{j=1}^{l+1} f_{(l+1-j)}(n) t^{l+1-j} \right),
\end{equation}
in which the terms expressing the hook lengths can be computed using 
equations (\ref{normalization1}) 
and (\ref{normalization2}). 
For $k \leq m$, we have 
\begin{equation*}
\frac{\prod_{i=k+1}^{n}{r_{i}}}
{\prod_{i=k+1}^{n}{(r_{i}-r_{k})}} = 
\frac{m+1-k}{m+1+l}\binom{l+n+1}{l+k}.
\end{equation*} 
Therefore, Proposition \ref{gelfondexpression} gives
\begin{equation*}
\mathcal{H}^{n}_{k,(l|m)}(t) = \frac{m+1-k}{m+1+l}\binom{l+n+1}{l+k}
t^{l+k}(1-t)^{n-k}
\frac{e_{m-k+1}(1,t^{n-k})}{e_{m-k}(t^{n-k})}. 
\end{equation*}
Using the branching rule for the elementary symmetric functions
leads to
\begin{equation*}
\mathcal{H}^{n}_{k,(l|m)}(t) = \frac{m+1-k}{m+1+l}\binom{l+n+1}{l+k}
t^{l+k}(1-t)^{n-k}
\left( \frac{(n-m)t}{m-k+1} + 1 \right). 
\end{equation*}
For $k > m$, we have
\begin{equation*}
\frac{\prod_{i=k+1}^{n}{r_{i}}}
{\prod_{i=k+1}^{n}{(r_{i}-r_{k})}} = 
\binom{l+n+1}{l+k+1}.
\end{equation*} 
Thus, the corresponding Gelfond-Bernstein element is given 
by
\begin{equation*} 
\mathcal{H}^{n}_{k,(l|m)}(t) = \binom{l+n+1}{l+k+1}
t^{l+k+1}(1-t)^{n-k} = B^{l+n+1}_{l+k+1}(t).
\end{equation*}
Summarizing 
\begin{proposition}
The Gelfond-Bernstein basis of the hook \muntz space 
$\mathcal{E}_{(l|m)}(n)$ with respect to the interval $[0,1]$ is given by
$$
\mathcal{H}^{n}_{0,(l|m)}(t) = 
(1-t)^{n}
\frac{S_{(l|m)}(1,t^n)}{e_{m}(t^n)}, 
$$
where an explicit expression of the Schur functions can be computed using
(\ref{forsummarize}).
For $k=1,..,m$
$$
\mathcal{H}^{n}_{k,(l|m)}(t) = \frac{m+1-k}{m+1+l}\binom{l+n+1}{l+k}
t^{l+k}(1-t)^{n}
\left( \frac{(n-m)t}{m-k+1} + 1 \right) 
$$
and for $k=m+1,...,n$
$$
\mathcal{H}^{n}_{k,(l|m)}(t) = B^{l+n+1}_{l+k+1}(t),
$$
where $B^{n+l+1}_{l+k+1}$ is the classical Bernstein basis.
\end{proposition}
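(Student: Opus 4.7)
The plan is to apply Proposition \ref{gelfondexpression} separately in three ranges, namely $k=0$, $1\le k\le m$ and $m+1\le k\le n$, using the fact that the partition attached to the sequence $\Lambda$ is the hook $\lambda=(l|m)=(l+1,1^m)$. In each case I identify the two truncated partitions $(\lambda_{k+1},\ldots,\lambda_n)$ and $(\lambda_{k+2},\ldots,\lambda_n)$ that appear in the numerator and denominator of the Schur-function ratio, and evaluate the rational prefactor $\prod_{i=k+1}^n r_i/\prod_{i=k+1}^n(r_i-r_k)$ from $\Lambda=(0,l+1,l+2,\ldots,l+m,l+m+2,\ldots,l+n+1)$.

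For $k=0$, $r_0=0$ forces the prefactor to equal $1$ and $t^{r_0}=1$; since the bottom partition of $(l|m)$ is $(1^m)$, we have $S_{\lambda^{(0)}}(t^n)=e_m(t^n)$, which yields the first formula directly. The explicit form of $S_{(l|m)}(1,t^n)/e_m(t^n)$ recorded in (\ref{forsummarize}) comes from branching rule (\ref{branchingrule2}) applied to the hook together with the hook-length identities (\ref{normalization1})–(\ref{normalization2}).

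For $1\le k\le m$, the truncated partitions are the columns $(1^{m-k+1})$ and $(1^{m-k})$, so the Schur ratio collapses to $e_{m-k+1}(1,t^{n-k})/e_{m-k}(t^{n-k})$. Applying the elementary branching rule $e_r(1,X)=e_r(X)+e_{r-1}(X)$ and $e_r(t^{n-k})=\binom{n-k}{r}t^r$ gives the factor $1+\frac{(n-m)t}{m-k+1}$. For the prefactor, split the two products at the gap between $r_m=l+m$ and $r_{m+1}=l+m+2$: the numerator $\prod_{i=k+1}^n r_i$ is $(l+n+1)!/[(l+k)!(l+m+1)]$, while $\prod_{i=k+1}^n(r_i-l-k)$ is $(n-k+1)!/(m-k+1)$; their ratio is $\frac{m-k+1}{l+m+1}\binom{l+n+1}{l+k}$. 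Multiplying by $t^{l+k}(1-t)^{n-k}$ gives the claimed formula.

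For $k\ge m+1$, both truncated partitions are empty, so the Schur ratio is $1$. Since $r_i=l+i+1$ for $i\ge m+1$ is an arithmetic progression, the prefactor reduces to $\binom{l+n+1}{l+k+1}$, and multiplication by $t^{l+k+1}(1-t)^{n-k}$ is exactly $B^{l+n+1}_{l+k+1}(t)$. The main obstacle is purely bookkeeping: the middle case requires carefully tracking the two ``missing'' factors $l+m+1$ (in the numerator) and $m-k+1$ (in the denominator) created by the jump in the sequence $\Lambda$ at position $m$, in order to obtain the clean combinatorial coefficient $\frac{m-k+1}{l+m+1}\binom{l+n+1}{l+k}$. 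No limiting argument or deeper Schur identity is needed beyond Proposition \ref{gelfondexpression} and the two elementary branching rules.
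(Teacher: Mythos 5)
Your proof is correct and follows essentially the same route as the paper: apply Proposition \ref{gelfondexpression} in the three ranges $k=0$, $1\le k\le m$, $k\ge m+1$, identify the truncated hook partitions, evaluate the prefactor by splitting the products at the gap in $\Lambda$, and reduce the Schur ratio via the elementary branching rule (and rule (\ref{branchingrule2}) for $k=0$). Note that your computation correctly yields the factor $(1-t)^{n-k}$ in the middle range, which agrees with the paper's own in-text derivation and shows that the exponent $(1-t)^{n}$ printed in the proposition statement is a typo.
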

\section{Blossom and the de Casteljau algorithms}
As the Gelfond-Bernstein bases are limits of the Chebyshev-Bernstein 
bases in \muntz spaces, we can extend the notion of blossom to the 
Gelfond-B\'ezier curves using Theorem \ref{theoremblossom}, which in turn 
will allows us to derive the corresponding de Casteljau algorithms  

\begin{definition}
Let $\Lambda = (0=r_{0},r_{1},...,r_{n})$ be a sequence of strictly
increasing real numbers and let $\lambda = (\lambda_1,...,\lambda_n)$
be the associated real partition. Consider an element $P$ of the 
\muntz space $E_{\Lambda}(n) = \mathcal{E}_{\lambda}(n)$ written as  
$$
P(t) = \sum_{k=0}^{n} a_k t^{r_k}.
$$
Then, the blossom $f_P$ of $P$ is defined for any $0 \leq j \leq n-1$ and 
$u_{j+1},...,u_{n}$ in $]0,1]$ by
$$
f_{P}(0^{j},u_{j+1},...,u_n) = a_0 + \lim_{\epsilon\to 0} \sum_{k=0}^{n} a_k 
\frac{f_{\lambda^{(0)}}(n)}{f_{\lambda^{(k)}}(n)}\frac{S_{\lambda^{(k)}}
(\epsilon^{j},u_{j+1},...,u_n)}
{S_{\lambda^{(0)}}(\epsilon^{j},u_{j+1},...,u_n)}, 
$$
where $(\lambda^{(0)},\lambda^{(1)},...,\lambda^{(n)})$ is the \muntz tableau 
associated with the partition $\lambda$,
and $f_P(0^{n}) = a_0$.
\end{definition}
It is clear from the definition that the blossom $f_{P}$ is symmetric in its 
arguments and that for any $t \in [0,1]$, 
$f_P(t,t,...,t) = P(t)$. Moreover, if we express the function $P$ in the
Gelfond-Bernstein basis as 
$$
P(t) = \sum_{k=0}^{n} p_{k} \mathcal{H}^{n}_{k,\lambda}(t) 
$$ 
then, the values $p_{k}, k=0,...,n$ are given by 
$$
p_{k} = f_{P}(0^{n-k},1^{k}).
$$ 
Therefore, to compute the control points of the function $P$
over the interval $[0,1]$, we need only to compute the control 
points of the functions $t^{r_k}$, $k=1,...,n$. 
Such computation is given in the following
\begin{proposition}\label{controlpoints}
Let $\mathcal{H}^{n}_{k,\lambda}, k=0,,...,n$ be the Gelfond-Bernstein basis 
of the \muntz space $\mathcal{E}_{\lambda}(n) = span(1,t^{r_1},...,t^{r_n})$.
Then, we have 
\begin{equation*}
t^{r_k} = \sum_{j=k}^{n} p_{j} \mathcal{H}^{n}_{j,\lambda}(t),
\end{equation*}
where 
\begin{equation}\label{pj} 
p_{j} = (1- \frac{r_k}{r_{j+1}}) (1- \frac{r_k}{r_{j+2}}) ...
(1- \frac{r_k}{r_{n}})
\quad \textnormal{for} \quad j=k,...,n-1
\end{equation}
and 
$$
p_n =1.
$$
\end{proposition}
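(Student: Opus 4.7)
The plan is to exploit the blossom-control-point identity $p_j = f_{t^{r_k}}(0^{n-j}, 1^j)$ recorded just before the proposition (together with the trivial $p_n = f_{t^{r_k}}(1^n) = 1$). The restriction $j \geq k$ is essentially automatic: since each $\mathcal{H}^{n}_{j,\lambda}$ vanishes to order $r_j$ at the origin, comparing vanishing orders of $t^{r_k}$ and the right-hand sum forces $p_j = 0$ for $j < k$. For $k \leq j \leq n-1$, I would specialize the blossom definition to $P(t) = t^{r_k}$ (so that $a_l = \delta_{lk}$, in particular $a_0 = 0$), which reduces $p_j$ to
\begin{equation*}
p_j = \frac{f_{\lambda^{(0)}}(n)}{f_{\lambda^{(k)}}(n)} \lim_{\epsilon \to 0} \frac{S_{\lambda^{(k)}}(1^j, \epsilon^{n-j})}{S_{\lambda^{(0)}}(1^j, \epsilon^{n-j})}.
\end{equation*}

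Next, I would apply the splitting formula (\ref{split}) to each Schur function, cutting $\lambda^{(0)}$ and $\lambda^{(k)}$ into a $j$-part ``top'' and an $(n-j)$-part ``bottom''. The hypothesis $j \geq k$ is precisely what is needed for the top of $\lambda^{(k)}$ to be a real partition, and a direct inspection of the M\"untz-tableau construction shows that the bottoms of $\lambda^{(0)}$ and $\lambda^{(k)}$ coincide (both equal $(\lambda_{j+2}, \ldots, \lambda_n, 0)$). Hence the $\epsilon$-factors and the bottom hook values cancel exactly, leaving the purely combinatorial expression
\begin{equation*}
p_j = \frac{f_{\lambda^{(0)}}(n)}{f_{\lambda^{(k)}}(n)} \cdot \frac{f_{\lambda_{\mathrm{top}}^{(k,j)}}(j)}{f_{\lambda_{\mathrm{top}}^{(0,j)}}(j)}.
\end{equation*}

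The final step is to evaluate these four hook values via the general formula (\ref{generalhook}). Using the identity $\lambda_a - \lambda_b + b - a = r_{b-1} - r_{a-1}$ from Definition \ref{realpartition}, a short case analysis yields the clean expression $f_{\lambda^{(m)}}(n) = \prod_{1 \leq a < b \leq n}(\rho^{(m)}_b - \rho^{(m)}_a)/(b-a)$, where $\rho^{(m)}$ denotes the tuple obtained from $(r_0, \ldots, r_n)$ by deleting $r_m$ (so $\rho^{(0)} = (r_1, \ldots, r_n)$), and an analogous formula holds for the $j$-part top partitions. Substituting and telescoping—the $(r_k - r_i)$ factors arising from $\rho^{(k)}$ cancel the mirror factors coming from the top-ratio, and the residual products collapse to a single product over $l = j+1, \ldots, n$—delivers the asserted formula $p_j = \prod_{l=j+1}^{n} (1 - r_k / r_l)$. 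The main technical obstacle is the bookkeeping in the case analysis for the hook-length factors, where indices $a \leq k$ and $a > k$ contribute differently; once the clean $\rho^{(m)}$ reformulation is in hand, the cancellations become essentially automatic.
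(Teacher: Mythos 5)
Your proposal is correct and follows essentially the same route as the paper: express $p_j$ via the blossom definition specialized to $t^{r_k}$, apply the splitting formula so the bottom partitions cancel, and evaluate the remaining ratio of hook values with the general formula (\ref{generalhook}). The paper dismisses that last step as ``lengthy, yet straightforward computations,'' whereas your $\rho^{(m)}$ reformulation (deleting $r_m$ from the exponent tuple, using $\lambda_a-\lambda_b+b-a=r_{b-1}-r_{a-1}$) actually carries it out and does telescope correctly to $\prod_{l=j+1}^{n}(1-r_k/r_l)$.
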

\begin{proof}
Let us choose $1 \leq k \leq n-1$, and denote by $p_j$ the $j$th control point 
of the function $t^{r_k}$. From the definition of the blossom, we have 
$$
p_{j} = \frac{f_{\lambda^{(0)}}(n)}{f_{\lambda^{(k)}}(n)} 
\lim_{\epsilon\to 0} \frac{S_{\lambda^{(k)}}
(1^{j},\epsilon^{n-j})}
{S_{\lambda^{(0)}}(1^{j},\epsilon^{n-j})}. 
$$
As $\lambda^{(k)} = (\lambda_1+1,\lambda_2+1,...,
\lambda_k+1,\lambda_{k+2},...,\lambda_{n},0)$
and 
$\lambda^{(0)} = (\lambda_2,\lambda_3,...,\lambda_{n},0)$, 
it is clear from the splitting formula (\ref{split}) that if $j < k$ then $p_{j} = 0$. 
In the case $j \geq k$, then again by the splitting formula (\ref{split}), we have 
$$
p_{j} = \frac{f_{\lambda^{(0)}}(n)}{f_{\lambda^{(k)}}(n)}
\frac{f_{\mu}(j)}{f_{\eta}(j)},
$$
where
$\mu$ and $\eta$ are the real partitions 
$$
\mu = (\lambda_1+1,\lambda_2+1,...,\lambda_k + 1,
\lambda_{k+2},...,\lambda_{j+1})
$$
and 
$$
\eta = (\lambda_2,\lambda_3,...,\lambda_{k},
\lambda_{k+1},...,\lambda_{j+1}).
$$
Lengthy, yet straightforward computations, using the hook length 
formula (\ref{generalhook}), shows that $p_{j}$ is given by (\ref{pj}).
The case $k=n$ is straightforward.
\end{proof}

\begin{remark}
Note that in the polynomial case $\mathcal{E}_{\emptyset}(n) 
= span(1,t,t^2,...,t^n)$, the last proposition give the familiar fact 
that the $j$th control point $p_{j}$ of the function $t^k$ is zero if $j < k$
and for $j \geq k$, we have 
$$
p_{j} = (1-\frac{k}{j+1})(1-\frac{k}{j+2})...(1-\frac{k}{n}) = 
\frac{\binom{j}{k}}{\binom{n}{k}}.
$$
\end{remark}

\begin{remark}
Proposition \ref{controlpoints} can also be proven without resorting to the 
notion of blossoming, but instead using the Cauchy residue formula as 
in \cite{lorentz}. For the seek of comparison, and of bringing up front 
a different aspect in the theory of Gelfond-Bernstein bases,
we will include the main steps of the proof here. For a sequence 
$\Lambda = (0=r_{0},r_{1},...,r_{n})$ of strictly
increasing real numbers and using the Cauchy residue formula in can be easily 
shown that the Gelfond-Bernstein basis $H^{n}_{k,\Lambda},k=0,..,n$ associated  
with the sequence $\Lambda$ can be expressed (for $0 \leq k \leq n-1$) as 
\begin{equation}\label{cauchy}
H^{n}_{k,\Lambda} (t) = (-1)^{n-k} r_{k+1}...r_{n} \frac{1}{2\pi i}
\int_{\Gamma} \frac{t^{z}}{(z-r_k)(z-r_{k+1})...(z-r_{n})} dz,
\end{equation}  
where $\Gamma$ is any simple closed curve that contains the nodes $r_i,i=k,...,n$ 
in its interior $Int(\Gamma)$, and such that the function $t^{z}$ is holomorphic 
in a neighborhood of $Int(\Gamma) \cup \Gamma$. Let us fix a $k < n$, then 
it can be proven by induction on $n$ that $1/z-r_k$ can also be written as  
\begin{equation*}
\frac{1}{z-r_n} - \frac{r_n - r_k}{(z-r_{n-1})(z-r_n)}+
...+(-1)^{n-k} \frac{(r_{k+1} - r_{k})(r_{k+2} - r_{k})...(r_{n} - r_{k})}
{(z - r_{k})(z - r_{k+1})...(z - r_{n})}.
\end{equation*}
Multiplying the last equation as well as the function $1/z-r_k$ by $t^{z}/2\pi i$ 
and integrating over $\Gamma$ leads, after using equation (\ref{cauchy}),
to a new proof of Proposition \ref{controlpoints}.    
\end{remark}
 
To express the pseudo-affinity property of the blossom in the space
$\mathcal{E}_{\lambda}(n)$ over the interval $[0,1]$, we can just introduce 
the following pseudo-affinity factor

\begin{definition}
Let $\lambda = (\lambda_1,\lambda_2,...,\lambda_n)$ be a real partition, and let 
$c,d$ be two real numbers in the interval $[0,1]$ such that $c <d$.   
we define the function $\alpha$ by: for any $0 \leq j \leq n-1$ and 
$U = (u_{j+1},...,u_{n-1})$ in $]0,1]$ if $c \neq 0$
\begin{equation*}
\alpha(0^{j},U;c,d,t) = \lim_{\epsilon\to0}
\frac{t-c}{d-c} \frac{S_{\lambda}(\epsilon^{j},U,c,t) 
S_{\lambda^{(0)}}(\epsilon^{j},U,d)}
{S_{\lambda}(\epsilon^{j},U,c,d) S_{\lambda^{(0)}}(\epsilon^{j},U,t)}, 
\end{equation*}
while for $c = 0$, we define $\alpha$ as 
\begin{equation}\label{multi1}
\alpha(0^{j},U;0,d,t) = \lim_{\epsilon\to0}
\frac{t}{d} \frac{S_{\lambda}(\epsilon^{j+1},U,t) 
S_{\lambda^{(0)}}(\epsilon^{j},U,d)}
{S_{\lambda}(\epsilon^{j+1},U,d) S_{\lambda^{(0)}}(\epsilon^{j},U,t)}, 
\end{equation}
where $\lambda^{(0)}$ is the bottom partition of $\lambda$.
\end{definition}

Taking the limit in the pseudo-affinity factor in Theorem \ref{pseudotheorem}
of the Chebyshev blossom shows that the blossom of Gelfond-B\'ezier curves satisfies
the following pseudo-affinity property : If for a function $P$ in the \muntz space
$\mathcal{E}_{\lambda}(n)$, we denote by $f_P$ its blossom, then for any
$U = (u_1,...,u_{n-1})$, sequence of real numbers in $[0,1]$, we have 
\begin{equation*}
f_P(U,t) = (1-\alpha(U;c,d,t)) f_P(U,c) + \alpha(U;c,d,t). 
f_P(U,d)
\end{equation*}
In order to derive the de Casteljau algorithm associated
with the \muntz space $\mathcal{E}_{\lambda}(n)$ over 
the interval $[0,1]$, we should derive the pseudo-affinity 
factor when $c=0$ and $d=1$. According to (\ref{multi1}) , we have 
\begin{equation*}
\alpha(0^{j},U;0,1,t) = \lim_{\epsilon\to0}
t \frac{S_{\lambda}(\epsilon^{j+1},U,t) 
S_{\lambda^{(0)}}(\epsilon^{j},U,1)}
{S_{\lambda}(\epsilon^{j+1},U,1) S_{\lambda^{(0)}}(\epsilon^{j},U,t)}. 
\end{equation*}
Applying the splitting formula (\ref{split}) leads to 
\begin{proposition}\label{decasteljau1}
Let $\lambda = (\lambda_1,...,\lambda_n$) be a real partition. Then,
the pseudo-affinity factor of the space $\mathcal{E}_{\lambda}(n)$
is given, for $j \leq n-1$, by 
\begin{equation*}
\alpha(0^{j},U;0,1,t) =  t \frac{S_{\mu}(U,t) 
S_{\eta}(U,1)}{S_{\mu}(U,1) S_{\eta}(U,t)},  
\end{equation*}
where 
$\mu = (\lambda_1,\lambda_2,...,\lambda_{n-j})$ and 
$\eta = (\lambda_2,\lambda_3,...,\lambda_{n-j+1}).$
\end{proposition}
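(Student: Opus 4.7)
The plan is to start directly from the defining limit formula \eqref{multi1} just above the proposition and apply the splitting formula \eqref{split} to each of the four Schur functions that appear in the quotient. After cancellation, the target formula should pop out without any clever manipulation.

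First I would rewrite the defining ratio by using the symmetry of Schur functions to move all the $\epsilon$'s to the last positions, so that every Schur function appearing is of the form $S_{\gamma}(\cdots, \epsilon^{h})$ for which \eqref{split} applies directly. Next I apply \eqref{split} to each of the four Schur functions, viewing $\lambda=(\lambda_1,\dots,\lambda_n)$ with a trailing zero appended so that its length matches the number of arguments ($n+1$ in the numerator's first factor, $n$ in the bottom-partition factor). Splitting $(\lambda_1,\dots,\lambda_n,0)$ as first $(n-j)$ parts plus last $(j+1)$ parts yields
\[
S_{\lambda}(\epsilon^{j+1},U,t) \sim \epsilon^{\lambda_{n-j+1}+\cdots+\lambda_n}\, S_{\mu}(U,t)\, S_{(\lambda_{n-j+1},\dots,\lambda_n,0)}(1^{j+1}),
\]
and similarly for the denominator factor $S_{\lambda}(\epsilon^{j+1},U,1)$, with $t$ replaced by $1$. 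Splitting $(\lambda_2,\dots,\lambda_n,0)$ as first $(n-j)$ parts plus last $j$ parts gives
\[
S_{\lambda^{(0)}}(\epsilon^{j},U,1) \sim \epsilon^{\lambda_{n-j+2}+\cdots+\lambda_n}\, S_{\eta}(U,1)\, S_{(\lambda_{n-j+2},\dots,\lambda_n,0)}(1^{j}),
\]
and similarly for $S_{\lambda^{(0)}}(\epsilon^{j},U,t)$.

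Then I would collect the four asymptotic expressions into the ratio. Two observations finish the proof. First, the powers of $\epsilon$ in the numerator $\lambda_{n-j+1}+\cdots+\lambda_n$ (from the $\lambda$-factor) plus $\lambda_{n-j+2}+\cdots+\lambda_n$ (from the $\lambda^{(0)}$-factor) match exactly those of the denominator, so all powers of $\epsilon$ cancel and the limit exists. Second, the factors $S_{(\lambda_{n-j+1},\dots,\lambda_n,0)}(1^{j+1})$ and $S_{(\lambda_{n-j+2},\dots,\lambda_n,0)}(1^{j})$ appear identically in both numerator and denominator and therefore also cancel. What remains is precisely
\[
\frac{S_{\mu}(U,t)\, S_{\eta}(U,1)}{S_{\mu}(U,1)\, S_{\eta}(U,t)},
\]
which, multiplied by the prefactor $t$ from \eqref{multi1}, is the claimed formula.

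The only delicate point is bookkeeping: one must be careful about how many trailing zeros are appended to $\lambda$ and to $\lambda^{(0)}$ so that lengths match the numbers of arguments, and then partition these padded sequences correctly into the "top" and "bottom" blocks used by \eqref{split}. Once the splitting is set up with the correct indices, the cancellation of the $\epsilon$-powers and of the $1^{j+1}$, $1^{j}$ factors is automatic, so I do not expect any substantive obstacle beyond this indexing care; verifying that the leftover sequences $(\lambda_{n-j+1},\dots,\lambda_n,0)$ and $(\lambda_{n-j+2},\dots,\lambda_n,0)$ are themselves real partitions (needed to apply \eqref{split}) is immediate from the definition of a real partition.
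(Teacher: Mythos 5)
Your argument is correct and is exactly the paper's route: the paper derives the proposition in one line by ``applying the splitting formula (\ref{split})'' to the defining limit (\ref{multi1}), and your proposal simply supplies the index bookkeeping (padding with trailing zeros, splitting off the last $j+1$ and $j$ parts, and cancelling the matching powers of $\epsilon$ and the $S_{(\cdot)}(1^{j+1})$, $S_{(\cdot)}(1^{j})$ factors) that the paper leaves implicit.
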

\vskip 0.5 cm
\noindent {\bf {de Casteljau algorithm for elementary \muntz spaces:}}
In the following, we derive the de Casteljau algorithm 
for the elementary \muntz spaces. We first study two 
special cases, namely, the \muntz space associated 
with the partition $\lambda = (1^n)$, i.e, the space 
$\mathcal{E}_{(1^n)}(n) = span(1,t,t^2,...,t^{n-1},t^{n+1})$ and 
the \muntz space associated with the partition $\lambda = (1)$, i.e.; 
the space $span(1,t^2,...,t^{n},t^{n+1})$.  
For the space $\mathcal{E}_{(1^n)}$ and according to
Proposition \ref{decasteljau1}, the pseudo-affinity factor is given,
for $j\neq 0$, by 
\begin{equation*}
\alpha(0^{j},U;0,1,t) = t \frac{e_{n-j}(U,t) e_{n-j}(U,1)}
{e_{n-j}(U,1) e_{n-j}(U,t)} = t,  
\end{equation*}
while for $j=0$, we have 
\begin{equation*}
\alpha(U;0,1,t) = t \frac{e_{n}(U,t) e_{n-1}(U,1)}
{e_{n}(U,1) e_{n-1}(U,t)} = t^2 \frac{e_{n-1}(U,1)}
{e_{n-1}(U,t)}.   
\end{equation*}
The last equations lead to the following de Casteljau algorithm
for $\mathcal{E}_{(1^n)}(n)$, in which for simplicity we exhibit the case 
of a Gelfond-B\'ezier curve of order $3$ with control points $(p_0,p_1,p_2,p_3)$
over the interval $[0,1]$ (Figure \ref{fig:figure2}), as follows 
\vskip 0.2 cm
\setlength{\unitlength}{1mm}
\begin{picture}(110,40)
\put(0, 33){\mbox{$p_0 = f_P(0,0,0)$}} \put(30,33){{\mbox{$p_1 = f_P(0,0,1)$}}}
\put(60,33){{\mbox{$p_2= f_P(0,1,1)$}}}   \put(90,33){{\mbox{$p_3 = f_P(1,1,1)$}}} 
\put(10,31){\vector(1,-1){7}} \put(40,31){\vector(-1,-1){7}}
\put(41,31){\vector(1,-1){7}} \put(70,31){\vector(-1,-1){7}}
\put(72,31){\vector(1,-1){7}} \put(101,31){\vector(-1,-1){7}}
\put(17.5, 22){\mbox{$f_P(0,0,t)$}} \put(48, 22){\mbox{$f_P(0,1,t)$}}
\put(79, 22){\mbox{$f_P(1,1,t)$}} 
\put(26,20){\vector(1,-1){7}} \put(56,20){\vector(-1,-1){7}} 

\put(57,20){\vector(1,-1){7}}  \put(86,20){\vector(-1,-1){7}} 
\put(33.5, 12){\mbox{$f_P(0,t,t)$}} \put(64.5, 12){\mbox{$f_P(1,t,t)$}} 
\put(40,10){\vector(1,-1){7}} \put(71,10){\vector(-1,-1){7}} 
\put(48.5, 2){\mbox{$f_P(t,t,t)$}}
\put(13,28){\mbox{$1-t$}}  \put(35,28){\mbox{$t$}}
\put(45,28){\mbox{$1-t$}}  \put(65,28){\mbox{$t$}}
\put(75,28){\mbox{$1-x_1$}}  \put(94.3,28){\mbox{$x_1$}}

\put(30,16){\mbox{$1-t$}}  \put(50,16){\mbox{$t$}}
\put(62,16){\mbox{$1-x_2$}}  \put(78.5,16){\mbox{$x_2$}}
\put(45,6){\mbox{$1-x_3$}}  \put(62,6){\mbox{$x_3$}}
\end{picture}
\vskip 0.2 cm
\noindent where $x_i$, $i=1,2,3$ are given by 
\begin{equation*}
x_i = t^2 \frac{e_2(t^{i-1},1^{3-i+1})}{e_2(t^{i},1^{3-i})}.
\end{equation*}
In the general case, the de Casteljau algorithm of the \muntz space 
$\mathcal{E}_{(1^n)}(n)$ is given by:

\vskip 0.3 cm

\hskip 0 cm {\bf{Given}} $p_i^0 = p_i$, $i=0,...,n$

\hskip 0 cm {\bf{for}} $r=1:n$  {\bf{do}}

\hskip 1 cm {\bf{for}} $i=0:n-r-1$ {\bf{do}} 

\hskip 1.5 cm $p_i^{r} = (1-t) p_i^{r-1} + t p_{i+1}^{r-1}$

$$ \hskip -4.3 cm x_r = t^2 \frac{e_{n-1}(t^{r-1},1^{n-r+1})}{e_{n-1}(t^{r},1^{n-r})}$$
            
\hskip 1.5 cm $p_{n-r}^{r} = (1-x_{r}) p_{n-r}^{r-1} + x_{r} p_{n-r+1}^{r-1}$

\hskip 1 cm {\bf{return}}

\hskip 0 cm {\bf{return}}

\hskip 0 cm $P(t) = p_0^n$.

\begin{figure}
\hskip 2 cm
\includegraphics[height=6cm]{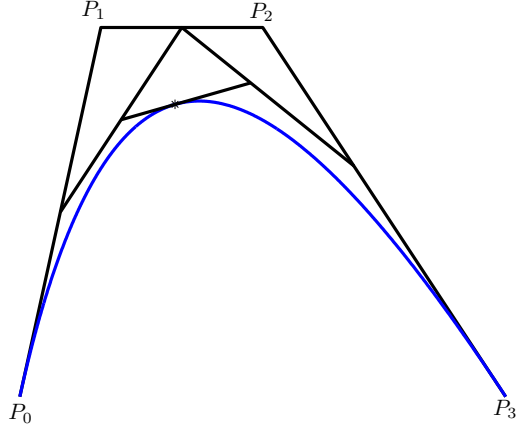}
\caption{The de Casteljau algorithm for the \muntz space 
$\mathcal{E}_{(1^3)}(3) = span(1,t,t^2,t^4)$ applied to the
Gelfond-B\'ezier curves associated with the control polygon
$(P_0,P_1,P_2,P_3)$ for the parameter $t = 1/2$.}
\label{fig:figure2}
\end{figure}

\begin{remark}
The phenomena that at each level of the de Casteljau algorithm only the edges 
of the last triangle has weights that are different from the classical de 
Casteljau algorithm is not specific to this case but the same phenomena
appears for all \muntz spaces associated with partitions of the shape
$\lambda = (r^n)$ where $r$ is a real number, namely,
\muntz spaces $span(1,t,t^2,...,t^{n-1},t^s)$ where $s$ is a real number 
strictly larger than $n-1$.        
\end{remark}
Consider, now, the pseudo-affinity factor associated with the \muntz space 
$\mathcal{E}_{(1)}(n)$. According to proposition \ref{decasteljau1}, we have 
\begin{equation*}
\alpha(0^{j},U;0,1,t) = t \frac{e_{1}(U,t)}{e_{1}(U,1)}.  
\end{equation*}
The last equation leads to the following de Casteljau algorithm
for $\mathcal{E}_{(1)}(n)$, in which again for simplicity we exhibit the case 
of a Gelfond-B\'ezier curve of order $3$ with control points $(p_0,p_1,p_2,p_3)$
over the interval $[0,1]$ (Figure \ref{fig:figure3}), as follows 
\vskip 0.2 cm
\setlength{\unitlength}{1mm}
\begin{picture}(110,40)
\put(0, 33){\mbox{$p_0 = f_P(0,0,0)$}} \put(30,33){{\mbox{$p_1 = f_P(0,0,1)$}}}
\put(60,33){{\mbox{$p_2= f_P(0,1,1)$}}}   \put(90,33){{\mbox{$p_3 = f_P(1,1,1)$}}} 
\put(10,31){\vector(1,-1){7}} \put(40,31){\vector(-1,-1){7}}
\put(41,31){\vector(1,-1){7}} \put(70,31){\vector(-1,-1){7}}
\put(72,31){\vector(1,-1){7}} \put(101,31){\vector(-1,-1){7}}
\put(17.5, 22){\mbox{$f_P(0,0,t)$}} \put(48, 22){\mbox{$f_P(0,1,t)$}}
\put(79, 22){\mbox{$f_P(1,1,t)$}} 
\put(26,20){\vector(1,-1){7}} \put(56,20){\vector(-1,-1){7}} 

\put(57,20){\vector(1,-1){7}}  \put(86,20){\vector(-1,-1){7}} 
\put(33.5, 12){\mbox{$f_P(0,t,t)$}} \put(64.5, 12){\mbox{$f_P(1,t,t)$}} 
\put(40,10){\vector(1,-1){7}} \put(71,10){\vector(-1,-1){7}} 
\put(48.5, 2){\mbox{$f_P(t,t,t)$}}
\put(13,28){\mbox{}}  \put(35,28){\mbox{$t^2$}}
\put(45,28){\mbox{}}  \put(60,28){\mbox{$\frac{t(1+t)}{2}$}}
\put(75,28){\mbox{$$}}  \put(90,28){\mbox{$\frac{t(2+t)}{3}$}}

\put(30,16){\mbox{$$}}  \put(46,16){\mbox{$\frac{2t^2}{1+t}$}}
\put(62,16){\mbox{$$}}  \put(72,16){\mbox{$\frac{t(1+2t)}{2+t}$}}
\put(45,6){\mbox{$$}}  \put(60,6){\mbox{$\frac{3t^2}{2t+1}$}}
\end{picture}
\vskip 0.2 cm
\noindent In the general case the de Casteljau algorithm of the \muntz space 
$\mathcal{E}_{(1)}(n)$ is given by :
\vskip 0.3 cm

\hskip 0 cm {\bf{Given}} $p_i^0 = p_i$, $i=0,...,n$

\hskip 0 cm {\bf{for}} $r=1:n$  {\bf{do}}

\hskip 1 cm {\bf{for}} $i=0:n-r$ {\bf{do}} 

$$
\hskip -0.26 cm 
p_i^{r} = (1-\frac{r t^2 + i t}{(r-1)t + (i+1)}) p_i^{r-1} + 
\frac{r t^2 + i t}{(r-1)t + (i+1)} p_{i+1}^{r-1}
$$

\hskip 1 cm {\bf{return}}

\hskip 0 cm {\bf{return}}

\hskip 0 cm $P(t) = p_0^n$.

In the general case of elementary \muntz space $\mathcal{E}_{(1^r)}(n)$, 
the pseudo-affinity factor is given, for $j \geq n-r+1$, by
\begin{equation*}
\alpha(0^{j},U;0,1,t) = t \frac{e_{n-j}(U,t) e_{n-j}(U,1)}
{e_{n-j}(U,1) e_{n-j}(U,t)} = t,  
\end{equation*}
while for $j < n-r+1$, we have 
\begin{equation*}
\alpha(U;0,1,t) = t \frac{e_{r}(U,t) e_{r-1}(U,1)}
{e_{r}(U,1) e_{r-1}(U,t)}.   
\end{equation*}   
We leave it as an exercise, to the reader, to derive the de Casteljau 
algorithm of the $r$th elementary \muntz space from the last equations.  
\begin{figure}
\hskip 2 cm
\includegraphics[height=6cm]{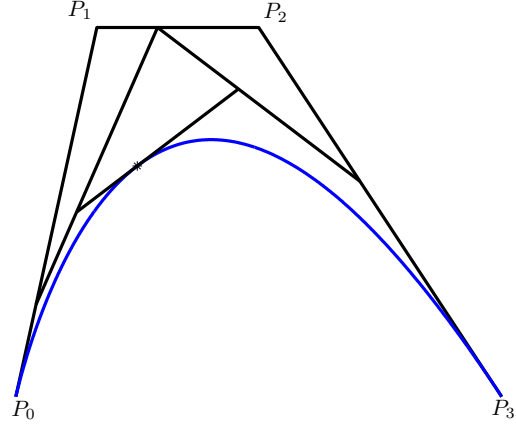}
\caption{The de Casteljau algorithm for the \muntz space 
$\mathcal{E}_{(1)}(3) = span(1,t^2,t^3,t^4)$ applied to the
Gelfond-B\'ezier curves associated with the control polygon
$(P_0,P_1,P_2,P_3)$ for the parameter $t = 1/2$.}
\label{fig:figure3}
\end{figure}

\vskip 0.5 cm
\noindent {\bf {de Casteljau algorithm for complete \muntz spaces: }}
From proposition \ref{decasteljau1}, the pseudo-affinity factor 
of the $k$th complete \muntz space is given by 
\begin{equation*}
\alpha(0^{j},U;0,1,t) = t \frac{h_{k}(U,t)}{h_{k}(U,1)}.  
\end{equation*} 
The last equation leads to the following de Casteljau algorithm
for $\mathcal{E}_{(k)}(n)$, in which for simplicity we exhibit the case 
of a Gelfond-B\'ezier curve of order $3$ with control points $(p_0,p_1,p_2,p_3)$
over the interval $[0,1]$ (Figure \ref{fig:figure4}), as follows 
\vskip 0.2 cm
\setlength{\unitlength}{1mm}
\begin{picture}(110,40)
\put(0, 33){\mbox{$p_0 = f_P(0,0,0)$}} \put(30,33){{\mbox{$p_1 = f_P(0,0,1)$}}}
\put(60,33){{\mbox{$p_2= f_P(0,1,1)$}}}   \put(90,33){{\mbox{$p_3 = f_P(1,1,1)$}}} 
\put(10,31){\vector(1,-1){7}} \put(40,31){\vector(-1,-1){7}}
\put(41,31){\vector(1,-1){7}} \put(70,31){\vector(-1,-1){7}}
\put(72,31){\vector(1,-1){7}} \put(101,31){\vector(-1,-1){7}}
\put(17.5, 22){\mbox{$f_P(0,0,t)$}} \put(48, 22){\mbox{$f_P(0,1,t)$}}
\put(79, 22){\mbox{$f_P(1,1,t)$}} 
\put(26,20){\vector(1,-1){7}} \put(56,20){\vector(-1,-1){7}} 

\put(57,20){\vector(1,-1){7}}  \put(86,20){\vector(-1,-1){7}} 
\put(33.5, 12){\mbox{$f_P(0,t,t)$}} \put(64.5, 12){\mbox{$f_P(1,t,t)$}} 
\put(40,10){\vector(1,-1){7}} \put(71,10){\vector(-1,-1){7}} 
\put(48.5, 2){\mbox{$f_P(t,t,t)$}}
\put(13,28){\mbox{$$}}  \put(29,28){\mbox{$\frac{th_r(t)}{h_r(1)}$}}
\put(45,28){\mbox{$$}}  \put(56,28){\mbox{$\frac{th_r(1,t)}{h_r(1,1)}$}}
\put(75,28){\mbox{$$}}  \put(86,28){\mbox{$\frac{th_r(1,1,t)}{h_r(1,1,1)}$}}

\put(30,16){\mbox{$$}}  \put(44,16){\mbox{$\frac{th_r(t,t)}{h_r(1,t)}$}}
\put(62,16){\mbox{$$}}  \put(72,16){\mbox{$\frac{th_r(1,t,t)}{h_r(1,1,t)}$}}
\put(45,6){\mbox{$$}}  \put(55,6){\mbox{$\frac{th_r(t,t,t)}{h_r(1,t,t)}$}}
\end{picture}
\vskip 0.2 cm

\noindent In the general case the de Casteljau algorithm of the \muntz space 
$\mathcal{E}_{(k)}(n)$ is given by :
\vskip 0.3 cm

\hskip 0 cm {\bf{Given}} $p_i^0 = p_i$, $i=0,...,n$

\hskip 0 cm {\bf{for}} $r=1:n$  {\bf{do}}

\hskip 1 cm {\bf{for}} $i=0:n-r$ {\bf{do}} 

$$
\hskip -1cm 
p_i^{r} = \left( 1-\frac{t h_k(1^i,t^r)}{h_k(1^{i+1},t^{r-1})}\right) p_i^{r-1} + 
\frac{t h_k(1^i,t^r)}{k_k(1^{i+1},t^{r-1})} p_{i+1}^{r-1}
$$

\hskip 1 cm {\bf{return}}

\hskip 0 cm {\bf{return}}

\hskip 0 cm $P(t) = p_0^n$.  

\begin{figure}
\hskip 2 cm
\includegraphics[height=6cm]{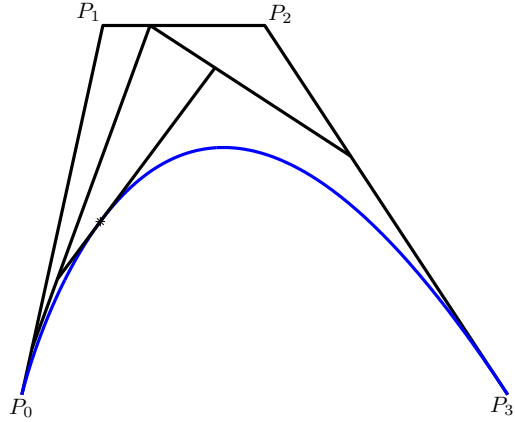}
\caption{The de Casteljau algorithm for the \muntz space 
$\mathcal{E}_{(2)}(3) = span(1,t^3,t^4,t^5)$ applied to the
Gelfond-B\'ezier curves associated with the control polygon
$(P_0,P_1,P_2,P_3)$ for the parameter $t = 1/2$.}
\label{fig:figure4}
\end{figure}
\begin{remark}\label{remarkshift}
Let $\lambda$ be a real partition associated with a \muntz space 
of order $n$, $\mathcal{E}_{\lambda}(n)$, and  let $P$ be an element of 
$\mathcal{E}_{\lambda}(n)$ written in the Gelfond-Bernstein basis as 
$$
P(t)  = \sum_{j=0}^{n} p_{j} \mathcal{H}^{n}_{j,\lambda}(t).  
$$
Denote by $q_i, i=0,...,n$ the control points of the function $P$
over an interval $[a,b]$ such that $0 <a < b <1$, namely 
$q_i = f_{P}(a^{n-i},b^i)$. Then from the properties of the blossom,
the function $P$ can also be written as 
$$
P(t)  = \sum_{j=0}^{n} q_{j} B^{n}_{j,\lambda}(t),  
$$
where $B^{n}_{j,\lambda},j=0,...,n$ is the Chebyshev-Bernstein basis of 
the space $\mathcal{E}_{\lambda}(n)$ over the interval $[a,b]$. Therefore, 
in some sense, the Gelfond-Bernstein basis over an interval contained in $[0,1]$
and does not contain the origin is exactly the Chebyshev-Bernstein basis. 
This has the drawback that if we reiterate the de Casteljau algorithm 
over intervals that does not contain the origin then we loose 
the simplifications in the algorithm that were brought up by the origin
through the splitting principle of Schur functions. To ovoid this drawback 
in practice, we should always make sure that the origin is a part 
of our interval. For example, to draw Gelfond-B\'ezier curves using the de 
Casteljau algorithm, we first subdivide the interval $[0,1]$ into the 
desired number of sub-intervals 
$[0=x_0,x_1],[x_1,x_2],....,[x_{m-1},x_m=1]$ and then apply successively 
the de Casteljau algorithm over the intervals $[0,x_s]\cup [x_s,x_{s+1}]$ 
for $s = m-1,m-2,...,1$.         
\end{remark} 

\section{The dimension elevation process}
Let $\Lambda_1 = (0=r_0,r_1,...,r_n)$ be a sequence of strictly increasing 
real numbers and let $H^{n}_{k,\Lambda_1}(t)$ be its corresponding 
Gelfond-Bernstein basis. Consider, now, a real number $\rho \neq r_i, 
i=0,...,n$. The \muntz space $E_{\Lambda_1}(n)$ is a subset of the  
\muntz space $E = span(1,t^{r_1},...,t^{r_n},t^{\rho})$. Therefore,
the Gelfond-Bernstein basis of the space $E_{\Lambda_1}(n)$ can be expressed
in terms of the Gelfond-Bernstein basis of the space $E$. Such expressions 
depend on the position of $\rho$ in the sequence $r_1 < r_2 < ...< r_n$ with respect 
to the increasing order. If we denote by $\Lambda_2$ the sequence obtained 
by arranging $(r_0=0,r_1,...,r_n,\rho)$ in a strictly increasing order, then we have  
\begin{proposition}\label{dimensionproposition}
If $\rho > r_n$, then for $k=0,...,n$, we have  
\begin{equation}\label{dimensionelevation1}
H^{n}_{k,\Lambda_1}(t) = \frac{\rho - r_{k}}{\rho} H^{n+1}_{k,\Lambda_2}(t)
+ \frac{r_{k+1}}{\rho} H^{n+1}_{k+1,\Lambda_2}(t).
\end{equation}
If $\rho < r_1$, then
\begin{equation*}
H^{n}_{0,\Lambda_1}(t) = H^{n+1}_{0,\Lambda_2}(t) + H^{n+1}_{1,\Lambda_2}(t)
\end{equation*}
and for $k\geq 1$, we have 
\begin{equation*}
H^{n}_{k,\Lambda_1}(t) = H^{n+1}_{k+1,\Lambda_2}(t).
\end{equation*}
If for a certain $s$, we have $r_s < \rho < r_{s+1}$, then 
for $k \geq s$, we have 
\begin{equation*}
H^{n}_{k,\Lambda_1}(t) = H^{n+1}_{k+1,\Lambda_2}(t).
\end{equation*}
\begin{equation*}
H^{n}_{s-1,\Lambda_1}(t) = \frac{\rho - r_{s-1}}{\rho} H^{n+1}_{s-1,\Lambda_2}(t)
+ H^{n+1}_{s,\Lambda_2}(t)
\end{equation*}
and for $k < s-1$
\begin{equation*}
H^{n}_{k,\Lambda_1}(t) = \frac{\rho - r_{k}}{\rho} H^{n+1}_{k,\Lambda_2}(t)
+ \frac{r_{k+1}}{\rho} H^{n+1}_{k+1,\Lambda_2}(t).
\end{equation*}
\end{proposition}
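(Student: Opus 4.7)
My plan is to work directly from the divided difference definition of the Gelfond-Bernstein basis,
$$H^n_{k, \Lambda}(t) = (-1)^{n-k}\, r_{k+1} \cdots r_n\,[r_k, r_{k+1}, \ldots, r_n]\, f_t \quad (k \leq n-1), \qquad H^n_{n, \Lambda}(t) = t^{r_n},$$
rather than invoking the limiting process of Theorem \ref{maintheorem} or the Schur formula of Proposition \ref{gelfondexpression}. The only tools I need are the symmetry of divided differences in their arguments and the one-step recurrence
$$(x_m - x_0)\,[x_0, x_1, \ldots, x_m]\, f = [x_1, \ldots, x_m]\, f - [x_0, x_1, \ldots, x_{m-1}]\, f.$$

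For the representative case $\rho > r_n$, I would substitute the divided difference expressions into the right-hand side of the claimed identity and factor out the common prefactor $(-1)^{n-k}\, r_{k+1} \cdots r_n$. For $k \leq n-1$ this reduces the claim to
$$(\rho - r_k)\,[r_k, r_{k+1}, \ldots, r_n, \rho]\, f_t = [r_{k+1}, \ldots, r_n, \rho]\, f_t - [r_k, r_{k+1}, \ldots, r_n]\, f_t,$$
which is precisely the recurrence above applied to the nodes $r_k, r_{k+1}, \ldots, r_n, \rho$ with outermost pair $r_k$ and $\rho$. The endpoint $k = n$ is then verified directly from $H^{n+1}_{n+1, \Lambda_2}(t) = t^\rho$ together with the two-point divided difference formula for $H^{n+1}_{n, \Lambda_2}$.

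The case $\rho < r_1$ is easier: for $k \geq 1$ the divided difference nodes and prefactors defining $H^n_{k, \Lambda_1}$ and $H^{n+1}_{k+1, \Lambda_2}$ agree term-by-term; the $k = 0$ identity collapses to one application of the recurrence on the extended node set $\{0, \rho, r_1, \ldots, r_n\}$ after a symmetric reordering that places $0$ and $\rho$ at the ends. The intermediate case $r_s < \rho < r_{s+1}$ blends the two patterns: indices sufficiently removed from the insertion point reduce to one of the first two arguments, while the single transitional index around $s$ yields the asymmetric hybrid identity. I expect this transitional index to be the main obstacle, since there one must reorder the nodes of $\Lambda_2$ using the symmetry of divided differences so that $\rho$ plays the role of an endpoint in the recurrence, and then track how the general coefficient $r_{k+1}/\rho$ specializes to the unit coefficient on the neighbouring basis function. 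Small verifications at the boundary values $s = 1$ and $s = n$ are also warranted to confirm that the general calculation specializes correctly to these degenerate configurations.
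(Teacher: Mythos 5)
Your proposal is correct and is essentially the paper's own proof: the paper likewise substitutes the divided-difference definitions into the right-hand side, factors out $(-1)^{n-k}r_{k+1}\cdots r_n$, and invokes the one-step recurrence $(\rho-r_k)[r_k,\dots,r_n,\rho]f_t=[r_{k+1},\dots,r_n,\rho]f_t-[r_k,\dots,r_n]f_t$, writing out only the case $\rho>r_n$ (plus the direct check at $k=n$) and declaring the remaining cases similar. Your suspicion about the transitional index is well placed: carrying that computation through shows the hybrid identity sits at $k=s$, with the plain shift $H^{n}_{k,\Lambda_1}=H^{n+1}_{k+1,\Lambda_2}$ valid only for $k\ge s+1$ and the two-term formula for $k\le s-1$, so the indexing printed in the third case of the statement is off by one --- for instance, with $\Lambda_1=(0,1,3)$ and $\rho=2$ (so $s=1$ and $\Lambda_2=(0,1,2,3)$) one finds $H^{2}_{1,\Lambda_1}=\tfrac12 H^{3}_{1,\Lambda_2}+H^{3}_{2,\Lambda_2}$, not $H^{3}_{2,\Lambda_2}$ as the displayed formula for $k\ge s$ would require.
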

\begin{proof}
We will only prove (\ref{dimensionelevation1}), as the other cases
can be proven similarly.
From the definition of the Gelfond-Bernstein basis, the right hand 
side of equation (\ref{dimensionelevation1}) is given by (for $k \leq n-1$)
\begin{equation}\label{iterationequation1}
(-1)^{n-k} r_k r_{k+1}...r_{n} \left( [r_{k+1},...,\rho]f_t - 
(\rho - r_{k}) [r_{k},...,\rho]f_t \right).
\end{equation}
From the definition of the divided difference, we have 
\begin{equation*}
[r_{k+1},...,\rho]f_t - [r_{k},...,r_{n}]f_t = (\rho - r_{k})
[r_{k},...,\rho]f_t.
\end{equation*}
Inserting the last equation into (\ref{iterationequation1})
conclude the proof of the lemma for $k \leq n-1$.
For $k=n$, the left hand side of (\ref{dimensionelevation1}) 
is equal to 
\begin{equation*}
t^{\rho} - (\rho-r_{n})[r_n,\rho]f_t = t^{r_n} = 
H^{n}_{n,\Lambda_1}(t).  
\end{equation*}
\end{proof}
Consider now an element of $E_{\Lambda_1}(n)$, written in the 
Gelfond-Bernstein bases of the spaces $E_{\Lambda_1}(n)$ and 
$E_{\Lambda_2}(n+1)$ as  
\begin{equation}\label{expansion}
P(t) = \sum_{k=0}^{n} H_{k,\Lambda_1}^{n}(t) P_{k} =
\sum_{k=0}^{n+1} H_{k,\Lambda_2}^{n+1}(t) \tilde{P}_{k},
\end{equation}
where $\Lambda_1$ and $\Lambda_2$ refer to the sequences in the statement 
of the last proposition. Using proposition \ref{dimensionproposition} 
to detect the coefficients of $H_{k,\Lambda_2}^{n+1}(t)$
in the expansion (\ref{expansion}), we readily find 
\begin{corollary}\label{elevationtheorem2}
The Gelfond-B\'ezier points $\tilde{P_k}$ in (\ref{expansion})
are related to the Gelfond-B\'ezier points $P_{k}$ 
by the relations  
$$
\tilde{P}_0 = P_0, \quad \tilde{P}_{n+1} = P_{n},
$$
and if $\rho > r_n$ then for $k=1,2,...,n$
\begin{equation}\label{de}
\tilde{P}_k = \frac{r_{k}}{\rho} \; P_{k-1} + 
\frac{(\rho-r_{k})}{\rho} \; P_{k}.
\end{equation}
If $\rho < r_1$ then for $k=0,...,n-1$, we have 
\begin{equation*}
\tilde{P}_{k+1} = P_{k}, 
\end{equation*}
and if for an $s$, we have $r_s < \rho < r_{s+1}$, then 
for $k=1,2,...,s-1$
\begin{equation*}
\tilde{P}_k = \frac{r_{k}}{\rho} \; P_{k-1} + \frac{(\rho-r_{k})}{\rho} \; P_{k},
\end{equation*}
and for $k=s,...,n+1$
\begin{equation*}
\tilde{P}_k = P_{k-1}. 
\end{equation*}
\end{corollary}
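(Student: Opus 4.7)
The plan is to exploit directly Proposition \ref{dimensionproposition}, which expresses each element $H^{n}_{k,\Lambda_1}$ as an explicit linear combination of at most two consecutive elements of $H^{n+1}_{\cdot,\Lambda_2}$. Substituting these expressions into the left-hand side of (\ref{expansion}), collecting the coefficient in front of each $H^{n+1}_{j,\Lambda_2}(t)$, and comparing with the right-hand side of (\ref{expansion}) gives identities in $E_{\Lambda_2}(n+1)$ that, by linear independence of the Gelfond-Bernstein basis, must hold coefficient by coefficient. This reduces the corollary to three bookkeeping computations, one for each placement of $\rho$ relative to the sequence $r_1 < \cdots < r_n$.

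For the case $\rho > r_n$, Proposition \ref{dimensionproposition} gives, for each $k=0,\ldots,n$,
$$
H^{n}_{k,\Lambda_1}(t) = \frac{\rho - r_k}{\rho}\, H^{n+1}_{k,\Lambda_2}(t) + \frac{r_{k+1}}{\rho}\, H^{n+1}_{k+1,\Lambda_2}(t).
$$
Substituting in (\ref{expansion}) and re-indexing the second sum, the coefficient of $H^{n+1}_{j,\Lambda_2}$ on the left is $\frac{\rho - r_j}{\rho} P_j + \frac{r_j}{\rho} P_{j-1}$ for $1 \le j \le n$, while for $j=0$ it reduces to $P_0$ (since $r_0=0$) and for $j=n+1$ it reduces to $P_n$ (since $r_{n+1}=\rho$ in $\Lambda_2$). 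Identifying these with the $\tilde{P}_j$ on the right-hand side of (\ref{expansion}) gives exactly (\ref{de}) together with the stated endpoint conditions.

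The other two cases are entirely analogous and reduce to careful index-shifting. If $\rho<r_1$, the basis $H^{n+1}_{\cdot,\Lambda_2}$ is indexed so that $\rho$ occupies the first position after $r_0=0$; then the only non-trivial contribution comes from the splitting of $H^{n}_{0,\Lambda_1}$, and all other $H^{n}_{k,\Lambda_1}$ are simply relabelled to $H^{n+1}_{k+1,\Lambda_2}$, giving $\tilde{P}_0=P_0$ and $\tilde{P}_{k+1}=P_k$. For $r_s<\rho<r_{s+1}$, the formulas of Proposition \ref{dimensionproposition} partition the range $k=0,\ldots,n$ into three blocks (below $s-1$, the transitional index $s-1$, and from $s$ onward); collecting coefficients of $H^{n+1}_{j,\Lambda_2}$ in each block yields the three ranges announced in the statement.

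No serious obstacle is anticipated: once Proposition \ref{dimensionproposition} is in hand, the corollary is pure coefficient-matching. The only point requiring attention is the indexing convention near the inserted node $\rho$, in particular checking that the endpoint coefficients simplify correctly (using $r_0=0$ when $\rho>r_n$, and using $r_{n+1}=\rho$ inside $\Lambda_2$), and that in the middle case the transitional equation for $\tilde{P}_s$ correctly absorbs the $H^{n+1}_{s,\Lambda_2}$ coming from $H^{n}_{s-1,\Lambda_1}$ with the $H^{n+1}_{s,\Lambda_2}$ that would otherwise come from relabelling $H^{n}_{s,\Lambda_1}$.
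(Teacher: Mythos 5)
Your proposal is correct and is exactly the paper's argument: the paper likewise obtains the corollary by substituting the decompositions of Proposition \ref{dimensionproposition} into (\ref{expansion}) and reading off the coefficient of each $H^{n+1}_{j,\Lambda_2}$, with the same endpoint simplifications via $r_0=0$ and $r_{n+1}=\rho$ in $\Lambda_2$. The coefficient-matching in all three cases checks out as you describe.
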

Let $n$ be a fixed integer and let $(0=r_0,r_1,...,r_n,r_{n+1},....,r_{m},...)$
be an infinite sequence of strictly increasing real numbers. For any positive integer $q$, 
we denote by $\Lambda_q = (0=r_0,r_1,...,r_q)$. Let $P$ be an element of the \muntz 
space $E_{\Lambda_n}(n)$ written as 
\begin{equation}\label{expansion2}
P(t) = \sum_{k=0}^{n} H_{k,\Lambda_n}^{n}(t) P_{k} =
\sum_{k=0}^{m} H_{k,\Lambda_m}^{m}(t) \tilde{P}_{k}; \quad m > n.
\end{equation}
Then, from corollary \ref{elevationtheorem2}, equation (\ref{de}), the control 
points $\tilde{P}_{k}$ can be computed using the following
corner cutting scheme : For $i=0,1,...,n$, we set $P_{i}^{0} =P_{i}$ and for
$j=1,2,...m-n$, we construct iteratively new polygons 
$(P_0^{j},P_1^{j},....,P_{n+j}^j)$ using the inductive rule  
\begin{equation}\label{initial}
P_{0}^{j} =P_{0}^{j-1} \quad  P_{n+j}^{j} = P_{n+j-1}^{j-1}
\end{equation}
and for $i=1,...,n+j-1$
\begin{equation}\label{cornercutting} 
P_{i}^{j} = \frac{r_{i}}{r_{n+j}} P_{i-1}^{j-1} +
\left( 1-\frac{r_{i}}{r_{n+j}} \right) P_{i}^{j-1}.
\end{equation}
\begin{figure}[h!]
\includegraphics[height=3.7cm]{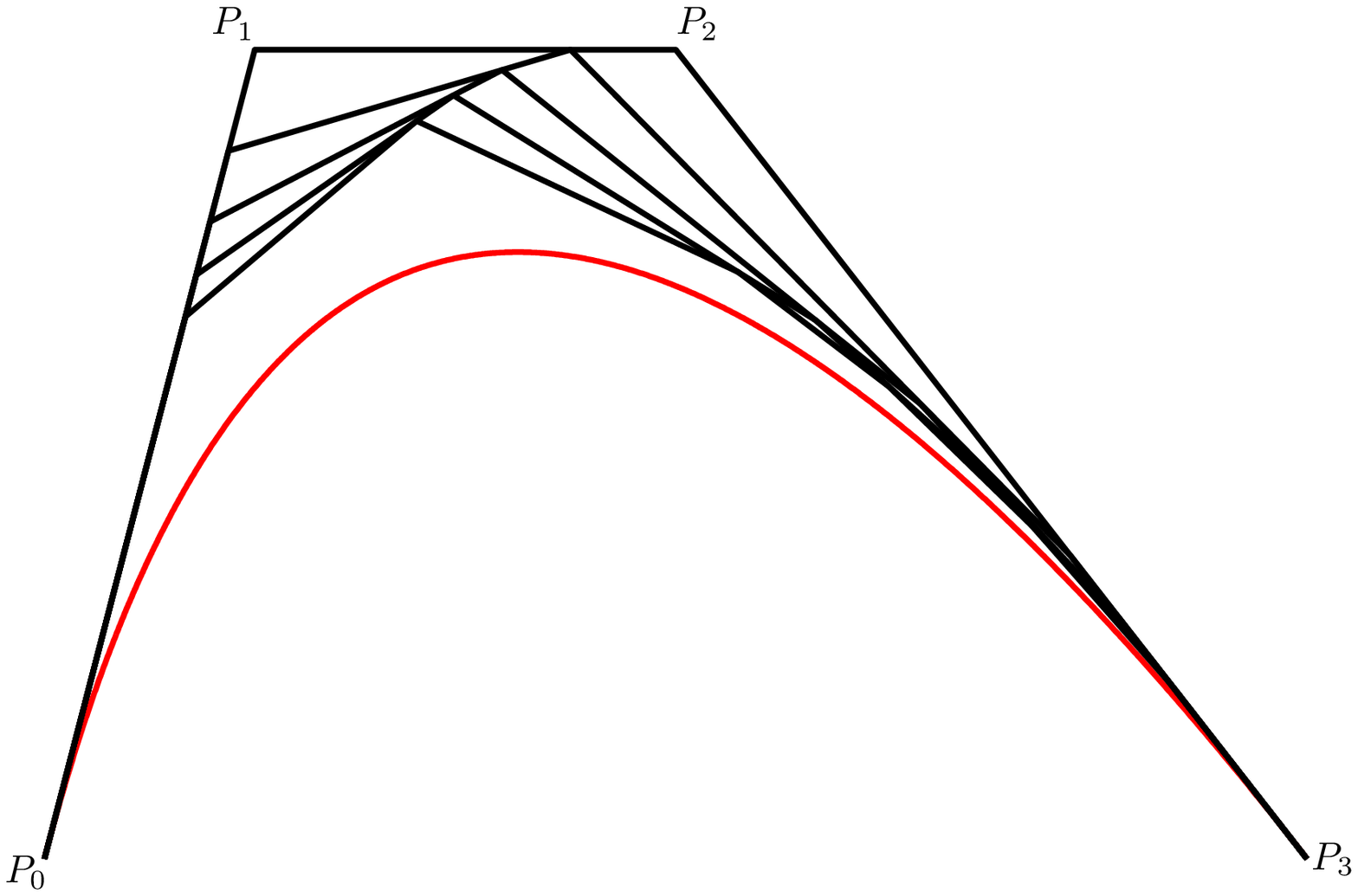}
\hskip 1.2 cm
\includegraphics[height=3.7cm]{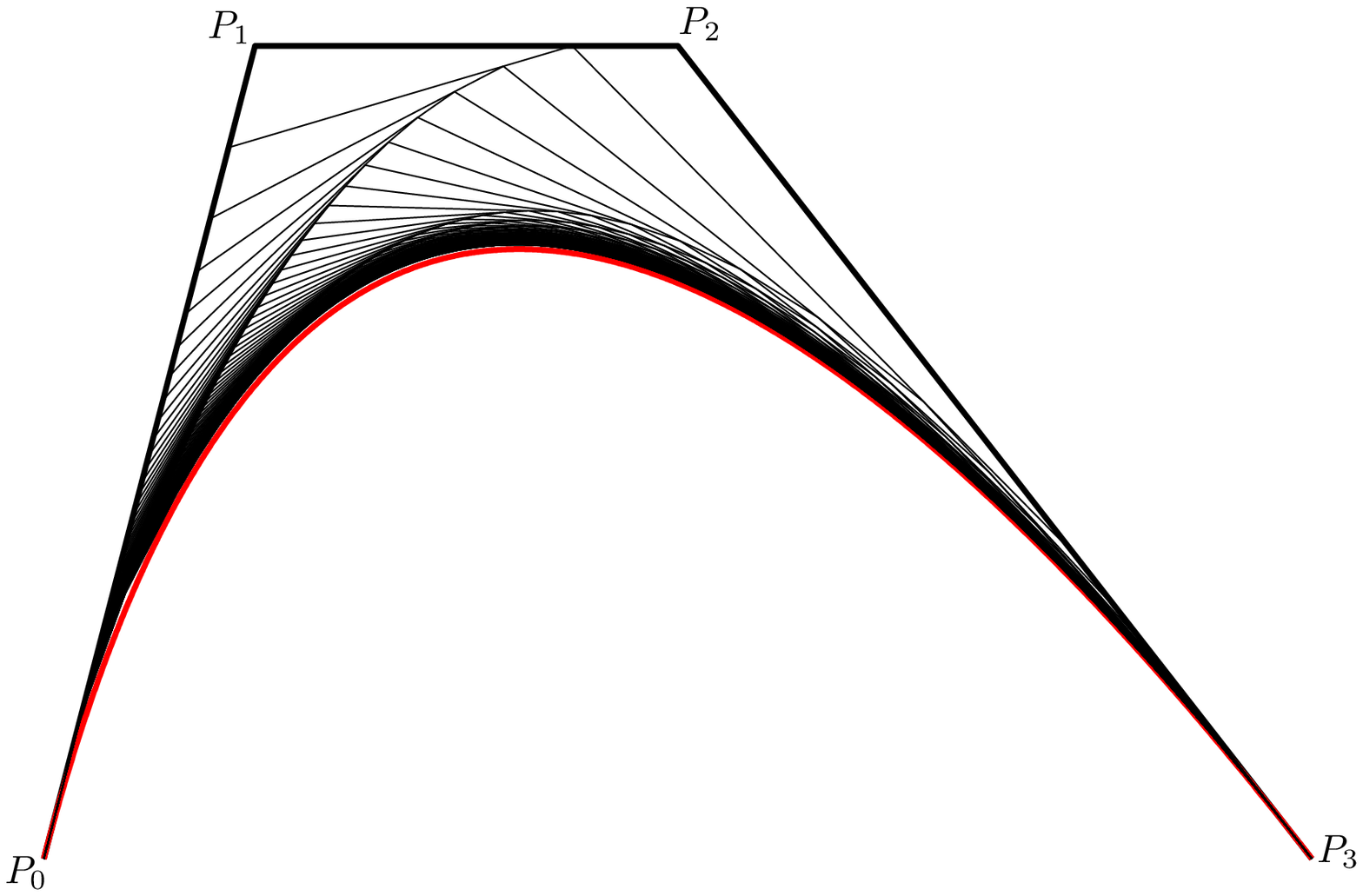}
\caption{The sequence of polygons generated by the corner 
cutting scheme (\ref{initial}) and (\ref{cornercutting})
and parameters $n=3, r_1=1, r_2=2, r_3 =3$ and $r_j = 2j$ for $j \geq 4$.
(left, four iterations of the scheme; right, 100 iterations of the scheme). 
The red curve is the B\'ezier curve associated with the control polygon 
$(P_0,P_1,P_2,P_3)$.}
\label{fig:figure5}
\end{figure}
In the case $r_i = i$ for any integer $i$, then we obtain 
the degree elevation algorithm, in which it is well know that 
the generated control polygon converges to the underlying
B\'ezier curve as $m$ goes to infinity \cite{kobbelt}.
Now consider the case in which $r_i = i$ for $i=1,...,n$ and
$r_i = 2i$ for $i > n$. Figure \ref{fig:figure5} (left) shows
the generated polygons from the scheme (\ref{initial}) and (\ref{cornercutting}) 
from four iterations, while Figure \ref{fig:figure5} 
(right) shows the generated polygons from 100 iterations. 
The figure suggests the convergence of the generated polygons 
to the B\'ezier curve with control points $(P_0,P_1,...P_n)$.
Consider, now, the case in which $r_i = i$ for $i=1,...,n$, 
while $r_i = i^2$ for $i >n$.
Figure \ref{fig:figure6} (left) shows the generated polygons 
from four iterations, while Figure \ref{fig:figure6} (right)
shows the obtained polygons after 100 iterations. It is clear
from the figure that the limiting polygon does not converge
to the B\'ezier curve with control points $(P_0,P_1,...,P_n)$.
Now, consider, for example, the limiting polygon of the corner 
cutting scheme (\ref{initial}) and (\ref{cornercutting})
for the case $n=3$ and in which $r_1=2, r_2 = 4, r_3 =5$ and 
$r_i = 2i$ for $i >3$.
\begin{figure}
\vskip 0.5 cm{
\includegraphics[height=3.7cm]{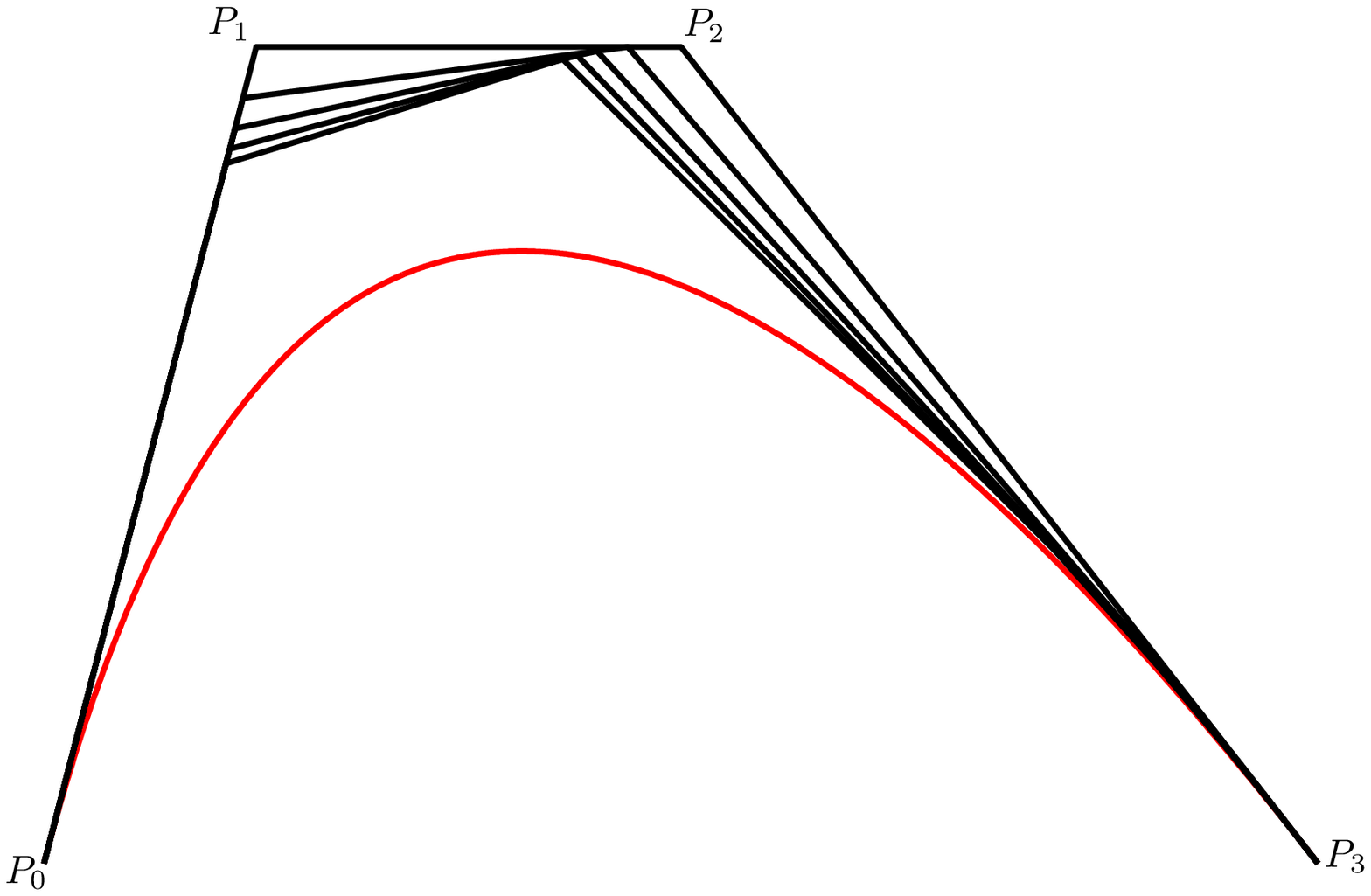}
\hskip 1.2 cm
\includegraphics[height=3.7cm]{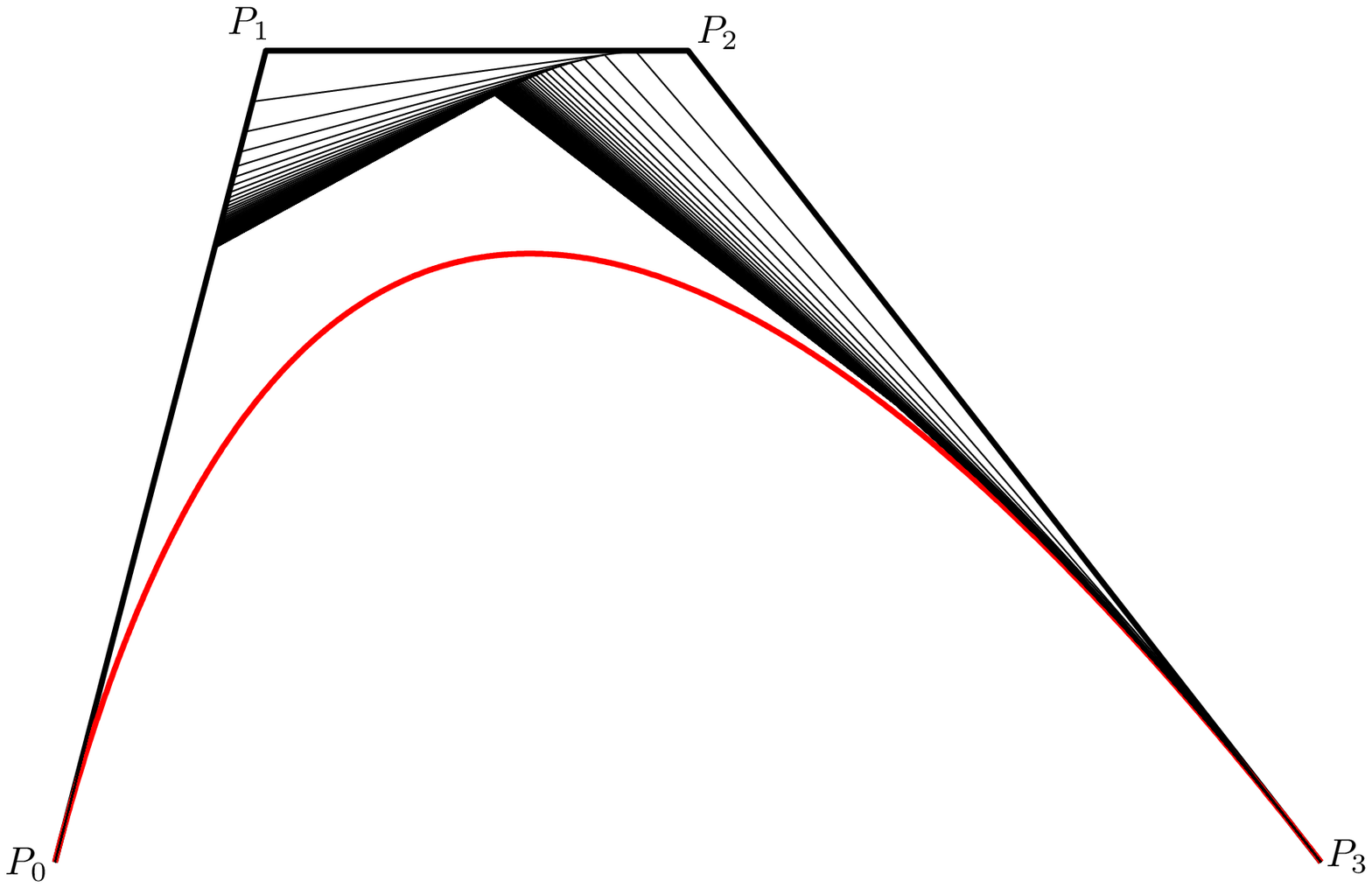}
\caption{The sequence of polygons generated by the corner 
cutting scheme (\ref{initial}) and (\ref{cornercutting})
and parameters $n=3, r_1=1, r_2=2, r_3 =3$ and $r_j = j^2$ for $j \geq 4$.
(left, four iterations of the scheme; right, 100 iterations of the scheme). 
The red curve is the B\'ezier curve associated with the control polygon 
$(P_0,P_1,P_2,P_3)$.}}
\label{fig:figure6}
\end{figure}
Figure \ref{fig:figure7} shows the generated polygons from 100 iterations and 
also shows the Gelfond-B\'ezier curve associated with the \muntz space
$F = span(1,t^{r_1},t^{r_2},t^{r_3}) = span(1,t^2,t^4,t^5)$ and 
control polygon $(P_0,P_1,P_2,P_3)$. The figure suggests that the limiting 
polygon converges to the Gelfond-B\'ezier curve. In fact, in \cite{aithaddou2},
the following was proven 

\begin{theorem}\label{muntzelevation}
Let $n$ be a fixed number and let 
$0 < r_1 < r_2 < ... r_n<r_{n+1}< ...<{r_m}<...$ be an 
infinite strictly increasing sequence of positive real numbers such that 
$\lim_{s\to\infty} r_{s} = \infty$. Then the limiting polygon
generated from a polygon $(P_0,P_1,...,P_n)$ in 
$\mathbb{R}^s, s \geq 1$ using the corner cutting scheme 
(\ref{initial}) and (\ref{cornercutting}) converges (pointwise 
and uniformly) to the Gelfond-B\'ezier curve associated with the \muntz space 
$span(1,t^{r_1},t^{r_2},...,t^{r_n})$ and control polygon 
$(P_0,P_1,...,P_{n})$ if and only if the real number $r_i$ satisfy
the condition 
\begin{equation}\label{muntzcondition}
\sum_{i=1}^{\infty} \frac{1}{r_i} = \infty
\end{equation}
\end{theorem}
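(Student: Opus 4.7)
The plan is to interpret the iterated polygons $(P_0^j,\ldots,P_{n+j}^j)$ as successive Gelfond-Bernstein control polygons of the single curve $P \in E_{\Lambda_n}(n)$, viewed inside the enlarged M\"untz spaces $E_{\Lambda_{n+j}}(n+j) \supset E_{\Lambda_n}(n)$, and then convert the geometric convergence question into a density statement governed by the M\"untz condition. First I would verify by induction on $j$ that the corner cutting recurrence (\ref{initial})-(\ref{cornercutting}) coincides with the dimension elevation formula of Corollary \ref{elevationtheorem2} applied with the new exponent $\rho = r_{n+j} > r_{n+j-1}$. This identifies $P_i^j$ as the $i$-th Gelfond-Bernstein control point of $P$ over $[0,1]$ in the enlarged space, and, using the blossom of Section 7, reads $P_i^j = f_P^{(n+j)}(0^{n+j-i},1^{i})$, where $f_P^{(m)}$ denotes the blossom of $P$ viewed as an element of $E_{\Lambda_m}(m)$.

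Next I would attach nodes $t_i^m \in [0,1]$ to these vertices by blossoming the simple monomial $t^{r_1}$: by Proposition \ref{controlpoints}, its Gelfond-Bernstein coefficients in $E_{\Lambda_m}(m)$ are $\tau_i^m = \prod_{k=i+1}^{m}(1 - r_1/r_k)$ with $\tau_0^m = 0$ and $\tau_m^m = 1$, and I set $t_i^m := (\tau_i^m)^{1/r_1}$. When $\sum 1/r_k = \infty$ the products $\prod_{k>i}(1-r_1/r_k)$ vanish for every fixed $i$, so as $m\to\infty$ the node set $\{t_i^m\}_{i=0}^{m}$ becomes dense in $[0,1]$; when $\sum 1/r_k<\infty$ these products stabilize strictly above $0$ and a definite gap near the origin persists, which I would exploit to construct an obstruction to convergence.

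For the forward direction the conclusion would follow from a Bernstein-style modulus-of-continuity argument. Starting from the reproduction formula $P(t) = \sum_{k=0}^{m} P_k^m H^{m}_{k,\Lambda_m}(t)$ and evaluating at $t = t_i^m$, I would use the total positivity and partition-of-unity property (Corollary \ref{total}) together with an estimate of the averaged node spread $\sum_k (t - t_k^m)^2 H^{m}_{k,\Lambda_m}(t) \to 0$ to obtain $\|P_i^m - P(t_i^m)\| \to 0$ uniformly in $i$. The converse would invoke the M\"untz-Sz\'asz theorem: if $\sum 1/r_i<\infty$, the closure of $\bigcup_m E_{\Lambda_m}(m)$ in $C([0,1])$ is a proper subspace, and choosing control points realizing a coordinate of $P$ outside this closure forces the limiting polygon to differ from the Gelfond-B\'ezier curve.

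The principal obstacle is the quantitative second-moment estimate in the forward direction, since unlike the classical polynomial Bernstein basis the Gelfond-Bernstein basis admits no elementary identity of the form $\sum_k(t-t_k^m)^2 H^{m}_{k,\Lambda_m}(t) = t(1-t)/m$. Bounding this sum requires combining the Schur-function representation of Proposition \ref{gelfondexpression}, the splitting formula (\ref{split}), and the hook-length asymptotics (\ref{generalhook}) with the M\"untz condition itself, which is the point at which (\ref{muntzcondition}) genuinely enters the analysis. This combinatorial estimate is the heart of the argument carried out in \cite{aithaddou2}.
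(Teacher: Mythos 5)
Note first that the paper does not actually prove Theorem \ref{muntzelevation}: it is stated as a result ``proven in \cite{aithaddou2}'' and only illustrated numerically (Figures \ref{fig:figure5}--\ref{fig:figure7}), so there is no in-paper argument to compare yours against and any proof must stand on its own. Your framing is the natural one and is consistent with what the paper does establish: reading the corner cutting scheme (\ref{initial})--(\ref{cornercutting}) as repeated dimension elevation via Corollary \ref{elevationtheorem2}, so that $(P_0^j,\dots,P_{n+j}^j)$ is the Gelfond--Bernstein control polygon of the fixed curve $P$ viewed in $E_{\Lambda_{n+j}}(n+j)$, and attaching Greville-type nodes $t_i^m=(\tau_i^m)^{1/r_1}$ with $\tau_i^m=\prod_{k=i+1}^{m}(1-r_1/r_k)$ from Proposition \ref{controlpoints}, is exactly the mechanism through which the divergence of $\sum 1/r_k$ can enter (for fixed $i$, $\tau_i^m\to 0$ precisely when the series diverges).

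Two genuine gaps remain. In the forward direction, the entire analytic content of the theorem is the concentration estimate $\sum_k(t-t_k^m)^2H^{m}_{k,\Lambda_m}(t)\to 0$, which you name, correctly identify as having no elementary closed form, and then defer wholesale to \cite{aithaddou2}; as written this direction is a programme rather than a proof, since density of the nodes alone does not control $\|P_i^m-P(t_i^m)\|$. More seriously, the converse is wrong as stated: every coordinate function of $P$ lies in $E_{\Lambda_n}(n)\subset\bigcup_m E_{\Lambda_m}(m)$ and hence in the closure of that union regardless of whether $\sum 1/r_i$ converges, so the M\"untz--Sz\'asz theorem provides no obstruction and no choice of control points can place a coordinate of $P$ outside that closure. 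The workable obstruction is the one you mention in passing and then abandon: when $\sum 1/r_k<\infty$ the tails $\prod_{k>i}(1-r_1/r_k)$ converge to strictly positive limits, and since $\tau_i^m\geq\tau_1^m\geq\tau_1^\infty>0$ for all $i\geq 1$ and all $m$, no node ever enters the interval $(0,(\tau_1^\infty)^{1/r_1})$; the limiting polygon is therefore affine on a fixed parameter gap adjacent to $0$ on which $P$ is generically not affine, which forces the limit to differ from the Gelfond--B\'ezier curve. You should develop that gap argument quantitatively and drop the M\"untz--Sz\'asz appeal.
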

The last theorem is a far reaching generalization of the statement that
the control polygons generated by the degree elevation algorithm converge 
to the underlying B\'ezier curve, namely, the latter is a consequence of 
the fact that 
$$
\sum_{n=1}^{\infty} \frac{1}{n} = \infty.
$$
Moreover, the emergence of the so-called \muntz condition (\ref{muntzcondition}) 
in Theorem \ref{muntzelevation} is rather surprising and raises the question
of a possible connections between the convergence of the polygons generated 
by the dimension elevation process of Gelfond-B\'ezier curves and 
the density questions in \muntz space. 
For a discussion on this matter we refer to our work in \cite{aithaddou2}.
\begin{figure}
\vskip 0.1 cm
\hskip 2cm
\includegraphics[height=4.5cm]{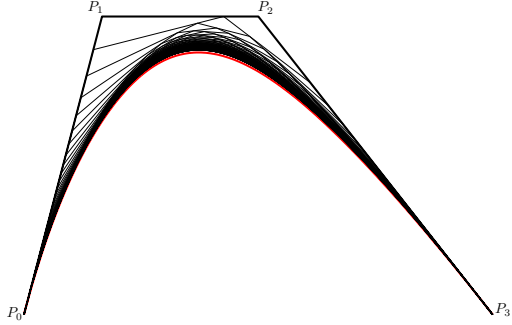}
\caption{The sequence of polygons generated from 100 iterations of 
the corner cutting scheme (\ref{initial}) and (\ref{cornercutting})
and parameters $n=3, r_1=2, r_2=4, r_3 =14$ and $r_j = 2j+10$ for $j \geq 4$. 
The red curve is the Gelfond-B\'ezier curve associated with the \muntz space 
$span(1,t^2,t^4,t^14)$ and control polygon $(P_0,P_1,P_2,P_3)$}
\label{fig:figure7}
\end{figure}
\section{Shifted Gelfond-B\'ezier curves and curve design}
As we have noted in remark \ref{remarkshift}, the Gelfond-Bernstein 
bases of \muntz spaces over an interval contained in $[0,1]$ and does not
contain the origin coincide, in some sense, with the Chebyshev-Bernstein bases.
Therefore, working with intervals that does not contain the origin has 
the drawback of loosing all the simplifications brought by the origin through 
the splitting principle of Schur functions. For curve design, in which 
for example we want to find conditions for the $C^k$ continuity between 
two Gelfond-B\'ezier curves, naturally one of the curves will be defined 
on an interval not containing the origin and then the  $C^k$ continuity 
conditions will be relatively complex as was shown in \cite{aithaddou1}.
One way to resolve this problem is to shift the origin to the left extremity of the interval
in which each of the two curves are defined.
This motivate the following definition.
\begin{definition}
Let $\Lambda = (0=r_{0},r_{1},...,r_{n})$ be a sequence of strictly
increasing real numbers and let $H^{n}_{k,\Lambda}, k=0,...,n$ 
be the Gelfond-Bernstein basis associated with the \muntz space $E_{\Lambda}(n)$
over the interval $[0,1]$. We define the shifted Gelfond-Bernstein basis
$\tilde{H}^{n}_{k,\Lambda}, k=0,...,n$ over an interval $[a,b]$ by 
\begin{equation*}
\tilde{H}^{n}_{k,\Lambda}(t) = H^{n}_{k,\Lambda}(\frac{t-a}{b-a});
\quad t\in [a,b]          
\end{equation*}
\end{definition}
Note that the shifted Gelfond-Bernstein basis $\tilde{H}^{n}_{k,\Lambda},
k=0,...,n$ over an interval $[a,b]$ is not a basis of the \muntz space
$E_{\Lambda}(n)$ but it is a basis of the shifted \muntz space 
$E_{\Lambda,a}(n) = span(1,(t-a)^{r_1},(t-a)^{r_2},...,(t-a)^{r_n})$.  
In the case the sequence $\Lambda = (0,1,...,n)$, then for any real number 
$a$, we have $E_{\Lambda}(n)=E_{\Lambda,a}(n)$, namely, the linear space of 
polynomials of degree $n$. In this case the shifted Gelfond-Bernstein basis 
over an interval $[a,b]$ coincide with the classical Bernstein basis over the 
interval $[a,b]$. 

All the relevant properties of shifted Gelfond-Bernstein bases over 
an interval $[a,b]$ can be deduced by simple manipulations from the 
non-shifted ones. For example, let $\Lambda_1 = (0=r_{0},r_{1},...,r_{n})$
and $\Lambda_2 = (0=s_{0},s_{1},...,s_{n})$ be two sequences of strictly
increasing real numbers and let $\tilde{H}^{n}_{k,\Lambda_1}, k=0,...,n$
be the shifted Gelfond-Bernstein basis over an interval $[a,b]$ associated 
with the sequence $\Lambda_1$ and $\tilde{H}^{n}_{k,\Lambda_2}, k=0,...,n$
be the shifted Gelfond-Bernstein basis over an interval $[b,c]$ associated 
with the sequence $\Lambda_2$. Consider now the following two shifted 
Gelfond-B\'ezier curves $\Gamma_1$ and $\Gamma_2$ with parameterizations 
\begin{equation*}
\begin{split}
& \Gamma_1: \quad P(t) = \sum_{k=0}^{n} \tilde{H}_{k,\Lambda_1}^{n}(t)
P_{k}; \quad t\in [a,b] \\
& \Gamma_2: \quad Q(t) = \sum_{k=0}^{n} \tilde{H}_{k,\Lambda_2}^{n}(t)
Q_{k}; \quad t\in [b,c].
\end{split} 
\end{equation*}
For simplicity, we assume that the real number $s_1$ in the sequence 
$\Lambda_2$ is equal to one. Then, in this case, from Theorem \ref{r1=1},
we have 
\begin{equation*}
P'(b) = \frac{r_n}{b-a} \Delta P_{n-1} 
\quad \textnormal{and} \quad 
Q'(c) = \frac{1}{c-b} \frac{\prod_{j=2}^{n} s_j}{\prod_{j=2}^{n} (s_j -1)} \Delta Q_0.
\end{equation*}
Therefore, a necessary and sufficient conditions for 
the two curves $\Gamma_1$ and $\Gamma_2$ to be $C^1$ at the point $P_n$ is that 
$$
P_{n} = Q_{0} 
\quad \textnormal{and} \quad
\frac{r_n}{b-a} \Delta P_{n-1} = \frac{1}{c-b} \frac{\prod_{j=2}^{n} s_j}
{\prod_{j=2}^{n} (s_j -1)} \Delta Q_0.
$$
Figure \ref{fig:figure8}, shows an example of $C_1$ continuity between two 
shifted Gelfond-B\'ezier curves of order $3$ associated respectively 
with the sequences $\Lambda_1 = (0=r_0,2,3,5)$ and $\Lambda_2 = (0=s_0,1,10,25)$
and defined respectively over the intervals $[1,2]$ and $[2,3]$.
It is possible to study the conditions for the $C^k, k \geq 2$ continuity and even 
define Gelfond splines. Such a study is still in progress and will be the subject
of a forthcoming contribution. 
\begin{figure}[h!]
\vskip 0.1 cm
\hskip 2cm
\includegraphics[height=7cm]{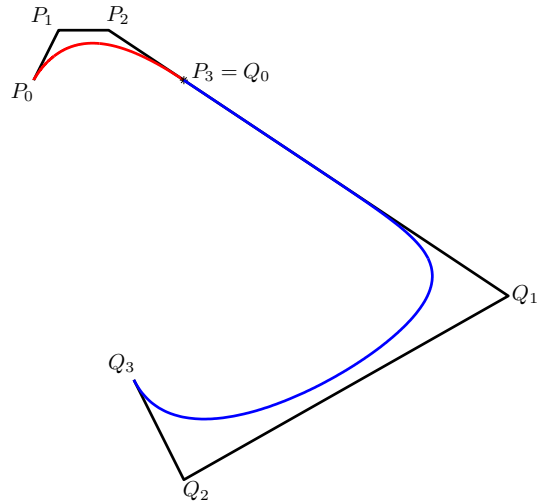}
\caption{$C^1$ continuity at the point $P_3$ between two shifted Gelfond-B\'ezier curves
associated with two different sequences. The shifted Gelfond-B\'ezier curve with control points
$(P_0,P_1,P_2,P_3)$ is associated with the sequence $\Lambda_1 = (0,2,3,5)$ and defined over 
the interval $[1,2]$, while the shifted Gelfond-B\'ezier curve with control points
$(Q_0,Q_1,Q_2,Q_3)$ is associated with the sequence $\Lambda_1 = (0,1,10,25)$ and 
defined over the interval $[1,2]$. (see text for more informations)}
\label{fig:figure8}
\end{figure}
\section{Conclusion}
In this work, we carried out a comprehensive study of the generalized 
Bernstein bases in \muntz spaces defined by Hirschman, Widder and Gelfond and 
that we termed here as Gelfond-Bernstein bases. We revealed their connection with 
the Chebyshev-Bernstein bases in \muntz spaces, thereby legitimating their role 
as a possible fundamental tool in computer aided geometric design concepts.
It it rather surprising that the Gelfond-Bernstein bases existed since 1949 and 
yet, to the best of our knowledge, they have never been incorporated into free form
curve design utilities. We hope that this work will motivate further study of the 
applications of Gelfond-B\'ezier curves and surfaces as well as Gelfond splines 
to computer aided geometric design.          

\subsubsection*{Acknowledgment : This work was partially supported by the MEXT 
Global COE project at Osaka University, Japan.} 

\vskip 0.2 cm

\label{references}


\begin{thebibliography}{4}

\bibitem{aithaddou1} R. Ait-Haddou, Y. Sakane and T. Nomura, Chebyshev blossom in \muntz 
spaces: toward shaping with Young diagrams. Submitted to Journal of Computational and 
Applied Mathematics, ArXiv preprint arXiv:1107.2392, (2011). 

\bibitem{aithaddou2} R. Ait-Haddou, Y. Sakane and T. Nomura,
A \muntz type theorem for a family of corner cutting schemes. 
Submitted. ArXiv preprint arXiv:1111.3410v1, (2011). 

\bibitem{bernstein} S. Bernstein, D\'emonstration du th\'eoreme de Weierstrass
fond\'ee sur le calcul des probabilit\'es. Comm. Soc. Math. Kharkov 13, 1--2. (1912). 

\bibitem{biane}P. Biane, L. Cantini and A. Sportiello, Doubly-refined 
enumeration of alternating sign matrices and determinants of 2-staircase
Schur functions. ArXiv preprint arXiv:1101.3427v1, (2011).

\bibitem{boyer} R. P. Boyer and L. C. Thiel, Generalized Berstein 
polynomials and symmetric functions,
Advances in Applied Mathematics 28, 17-–39 (2002).

\bibitem{Gelfond} A. O. Gelfond, On the generalized polynomials 
of S. N. Bernstein (in russe) Izv. Akad. Nauk SSSR, ser. math.,
14, 413--420. (1950).

\bibitem{hirschman} I. I. Hirschman and D. V. Widder, Generalized 
Bernstein polynomials. Duke Math. J., 16, 433--438. (1949).


\bibitem{lorentz} G. G. Lorentz, Bernstein polynomials. University 
of Toronto Press, Toronto, (1953).

\bibitem{Macdonald} I.G. Macdonald, Symmetric functions and Hall polynomials,
Oxford Math. Monographs, (1979).


\bibitem{mazure0} M.-L. Mazure, Chebyshev blossoming, RR 953M IMAG, 
Universit\'e Joseph Fourier, Grenoble
(January 1996).

\bibitem{mazure1} M.-L. Mazure, Chebyshev–-Bernstein bases, Comput.
Aided Geom. Design 16, 291–-31. (1999).

\bibitem{muntz} Ch. H. \muntz, \"Uber den Approximationssatz von 
Weierstrass, Mathematische Abhandlungen
in H. A. Schwarz's Festschrift, Berlin, Springer, 303–-312. (1914).
 
\bibitem{pottmann} H. Pottmann, The geometry of Tchebycheffian splines, Comput. 
Aided Geom. Design, 10, 181--210. (1993).

\bibitem{kobbelt} H. Prautzsch and L. Kobbelt, Convergence of subdivision and degree elevation.
Advances in Computational Mathematics, 2, 143–-154 (1994).
 
\bibitem{winkel} R. Winkel, Generalized Bernstein polynomials and B\'ezier curves:
An application of umbral calculus to computer aided geometric design, Advances 
in Applied Mathematics 27, 51--81 (2001).


\end{thebibliography}
\end{document}